\title
{On the global existence of generalized rotational hypersurfaces with prescribed mean curvature in the Euclidean 
spaces.  I}
\author{Katsuei Kenmotsu and Takeyuki Nagasawa\footnote{The second named author is partly supported by Grant-in-Aid for scientific Research (A)(No.22244010-01), Japan Society for the  Promotion of Science.}}
\date{}
\newcommand{\qed}{{\unskip\nobreak\hfil\penalty50\quad\null\nobreak\hfil
	$\square$\parfillskip0pt\finalhyphendemerits0\par\medskip}}
\newcommand{\pref}[1]{(\ref{#1})}
\newtheorem{thm}{Theorem}[section]
\newtheorem{prop}{Proposition}[section]
\newtheorem{lem}{Lemma}[section]
\newtheorem{cor}{Corollary}[section]
\newtheorem{rem}{Remark}[section]
\newtheorem{proof}{\normalfont\itshape Proof.}
\begin{document}
\maketitle
\begin{abstract}
We prove that any piece of a rotational hypersurface  with prescribed mean curvature function in a Euclidean space can 
be uniquely extended infinitely, which generalizes the results by Euler and Delaunay for surfaces of revolution with constant mean curvautre. Next, we prove the same kind of theorem for generalized rotational hypersurfaces of $O(l+1)\times O(m+1)$-type. The key lemmas in this paper show  the existence of solutions for singular initial value problems which arise from the analysis of ordinary differential equations of generating curves of those hypersurfaces. 

\begin{flushleft}
{\footnotesize 2010 {\it Mathematics Subject Classification} 53C42(primary),34B16(secondary).}
\end{flushleft}
 
\end{abstract}
\section{Introduction}
Surfaces of revolution with constant mean curvature in the Euclidean three-space $\mathbb{R}^{3}$ can be uniquely extended infinitely by Delaunay's rolling construction method \cite{del}.
This result was generalized to higher dimensions by Hsiang and Yu \cite{hsyu}.
Another proof of Delaunay--Hsiang--Yu's theorem was given in Dorfmeister and Kenmotsu \cite{doke1}.
\par
There is a different approach to generalizing Delaunay's theorem.
In Kenmotsu \cite{ken2},
one of the authors of this paper extended the periodicity property of surfaces of revolution with constant mean curvature to surfaces of revolution with non-constant mean curvature.
To consider this extension, one must first prove the global existence of such surfaces for a given periodic mean curvature defined on the whole line $\mathbb{R}$.
\par
Kenmotsu \cite{ken1} showed that for a given continuous function $H(s)$ on $\mathbb{R}$,
there exists a global surface of revolution such that the mean curvature is $H(s)$; this extends Delaunay's result to the case of non-constant mean curvature.
Later,
in 2009,
Dorfmeister and Kenmotsu \cite{doke2} extended this result to higher dimensions under the condition that the mean curvature function is real analytic.
\par
The purpose of this paper is two-fold.
First,
we extend the results of Kenmotsu \cite{ken1} and Dorfmeister and Kenmotsu \cite{doke2} to rotational hypersurfaces with the mean curvature function being continuous;
next,
we shall prove the same kind of  existence theorem and study some properties  for a class of generalized rotational hypersurfaces.
\par
A generalized rotational hypersurface $M$ in the $n$-dimensional Euclidean space $\mathbb{R}^{n}$ is defined by a compact Lie group $G$ and its representation to $\mathbb{R}^{n}$ \cite{hsjdg},
i.e.,
$M$ is invariant under an isometric transformation group $(G,\mathbb{R}^{n})$ with  codimension  two principal orbit 
type.
Such transformation groups $(G,\mathbb{R}^{n})$ have already been classified in \cite{hsla};
for $n=3$,
we have only $G=O(2)$,
and for $n\geqq 4$, there are 14 Lie groups.
\par
In this paper, we study  generalized rotational hypersurfaces of  $O(n-1)$-type with prescribed 
mean curvature function, which are equivalent to the notion of rotational hypersurfaces,
and of  $O(l+1) \times O(m+1)$-type,
and we prove the global existence of such hypersurfaces under the condition that a given mean curvature is continuous.
The main results of this paper are stated in Theorems \ref{thm.2.1} and \ref{thm.4.1}. Proofs of these theorems use Banach's fixed-point theorem on an appropriate class of functions, and, thus,
the main task in this paper is to estimate these terms appearing in differential equations of generating curves.  In section 3, we shall study some properties of generalized rotational hypersurfaces of  $O(n-1)$-type by applying Theorem \ref{thm.2.1}.

The analysis and the geometry of generalized rotational hypersurfaces with prescribed mean curvature function of other 
types will be studied in succeeding papers.

\par
\section{Global existence of generalized rotational hypersurfaces of $O(n-1)$-type}
In this section,
we prove the global existence of  generalized rotational hypersurfaces of $O(n-1)$-type with prescribed mean curvature function.
Let $(x(s),y(s))$,
$ y(s) > 0 $ $( s \in \mathbb{R})$ be a plane curve parametrized by  arc length in the Euclidean two-plane
$\mathbb{R}^2$ defined by $ x_{3} = \cdots = x_{n}=0 $,
where $x_{i} $,
$ i= 1 $,
$ 2 $,
$ \cdots $,
$ n $,
are the standard coordinates of 
$\mathbb{R}^n$.
A generalized rotational hypersurface $M$ of $O(n-1)$-type is  defined by
\begin{equation}
	M = \left\{ (x(s),y(s)S^{n-2})\in \mathbb{R}^n \ | \ s \in \mathbb{R} \right\},
	\label{Eq.(1)}
\end{equation}
where $S^{n-2}$ is an $(n-2)$-dimensional unit sphere with center origin in the $(n-1)$-dimensional Euclidean space defined by $x_{1}=0$.
The plane curve $(x(s),y(s))$ is called the generating curve of $M$. 
\par
The mean curvature of $M$ is a function of one variable $s$, denoted by $H=H(s)$, and satisfies
\begin{equation}
	( n - 1 ) H(s)
	=
	( n - 2 ) \frac {x^\prime (s)} {y(s)}
	+
	x^{\prime\prime}(s) y^\prime(s) - x^\prime(s) y^{\prime\prime}(s) ,
	\quad   \mbox{for}  \quad
	s \in \mathbb{R} .
	\label{Eq.(2)}
\end{equation}
Component functions of the generating curve satisfy $y(s) >0$ and
\begin{equation}
	x^\prime (s)^2 + y^\prime (s)^2 = 1, \quad \mbox{for} \quad  s \in \mathbb{R},
	\label{Eq.(3)}
\end{equation}
because the parameter $s$ is arc length.   
\par
Conversely, given a continuous function $H(s)$ defined on the whole line $\mathbb{R}$,
\pref{Eq.(2)} and \pref{Eq.(3)} are a system of ordinary differential equations for $x(s)$ and $y(s)$.
\par
Let us fix an $s_{0} \in \mathbb{R}$.
Given any $c>0$, and any real numbers $c^\prime$,
$d^\prime$ satisfying ${c^\prime}^2 + {d^\prime}^2 = 1$,
 the usual existence theorem of ordinary differential equations implies that there exists a local solution curve 
$(x(s),y(s))$ on $ \mathbb{R}$,
of the system \pref{Eq.(2)} and \pref{Eq.(3)} with the initial conditions $x(s_{0})=0$,
$y(s_{0})=c$,
$ x^\prime(s_{0})=c^\prime$,
$ y^\prime(s_{0})=d^\prime$.

When we extend the definition of the domain of these component functions to $\mathbb{R}$,
a problem happens at the point that $y(s)$ passes through $x$-axis at some finite $s$.
By Dorfmeister and Kenmotsu \cite[Proposition 3.2, p.706]{doke2}, 
we have
\begin{prop}
Suppose that $\lim_{s \rightarrow b}y(s)=0$ for some $b \in \mathbb{R}$.
Then,
there exists the 
limit of $x^\prime(s)$ as $s \rightarrow b$,
and $\lim_{s \rightarrow b}x^\prime(s)=0$.
\label{prop.2.1}
\end{prop}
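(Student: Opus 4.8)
The plan is to reduce the statement to a single first-order identity with a divergence structure. Using \pref{Eq.(3)} to eliminate the second derivatives (differentiating $x'^2+y'^2=1$ gives $x'x''+y'y''=0$, so that $x''y'^2-x'y'y''=x''$), one checks directly from \pref{Eq.(2)} that
\[
\frac{d}{ds}\bigl(y(s)^{n-2}x'(s)\bigr)=(n-1)H(s)\,y(s)^{n-2}y'(s).
\]
Write $F(s)=y(s)^{n-2}x'(s)$. Since $|x'|\le 1$ by \pref{Eq.(3)} and $y\to 0$, the function $F$ extends continuously to $b$ with $F(b)=0$. The difficulty is that $x'(s)=F(s)/y(s)^{n-2}$ is an indeterminate quotient, so I must compare the rates at which $F$ and $y^{n-2}$ vanish.

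Next I would integrate this identity. Fix a one-sided neighborhood of $b$ on which $H$ is bounded, say $|H|\le M$ (possible since $H$ is continuous on $\mathbb{R}$). Because $F(b)=0$, integrating from $s$ to $b$ gives $F(s)=-(n-1)\int_s^b H\,y^{n-2}y'\,d\sigma$, whence
\[
|x'(s)|\le\frac{(n-1)M}{y(s)^{n-2}}\int_s^b y(\sigma)^{n-2}\,|y'(\sigma)|\,d\sigma.
\]
If $y$ is monotone on this neighborhood, then $\int_s^b y^{n-2}|y'|\,d\sigma=\frac{1}{n-1}y(s)^{n-1}$ (using $y(b)=0$), and the bound collapses to $|x'(s)|\le M\,y(s)\to 0$, which is exactly the assertion.

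The main obstacle is therefore the possible oscillation of $y$ near $b$, and the crux of the proof is to rule it out by showing that $y$ is eventually monotone. For this I would examine the critical points of $y$. At any $s$ with $y'(s)=0$, \pref{Eq.(3)} forces $x'(s)=\pm1$ and hence $x''(s)=0$, so \pref{Eq.(2)} reduces to $y''(s)=\frac{n-2}{y(s)}-(n-1)H(s)\,x'(s)$. Since $y\to 0$ and $n\ge 3$, the term $(n-2)/y$ dominates the bounded quantity $(n-1)Hx'$, so $y''(s)>0$ at every critical point sufficiently close to $b$; that is, each such critical point is a strict local minimum. Two strict local minima would force an interior local maximum in between, at which $y'=0$ and $y''\le 0$, contradicting the previous statement. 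Hence $y$ has at most one critical point in a small one-sided neighborhood of $b$ and is monotone there; being positive with limit $0$, it must decrease to $0$. Combining this with the estimate of the second paragraph yields $\lim_{s\to b}x'(s)=0$. The only points requiring care are the one-sided nature of the argument (the side from which $s\to b$, both sides being handled symmetrically) and the boundedness of $H$, both of which are routine.
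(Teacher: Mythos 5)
Your proof is correct and takes essentially the same approach as the paper's: the same integrated identity $\frac{d}{ds}\bigl(y^{n-2}x'\bigr)=(n-1)H\,y^{n-2}y'$, and the same critical-point argument (at any zero of $y'$ near $b$ one has $x'=\pm1$, $x''=0$, hence $y''=\frac{n-2}{y}-(n-1)Hx'>0$, so every critical point is a strict local minimum and $y$ is eventually monotone), which is precisely the paper's Lemma 4.1 and the content of Remark 2.1 supplementing the cited proof of Dorfmeister--Kenmotsu. The only difference is cosmetic: where the paper finishes with l'H\^{o}pital's rule applied to the quotient $F(s)/y(s)^{n-2}$, you evaluate $\int_s^b y^{n-2}|y'|\,d\sigma$ in closed form under monotonicity to get $|x'(s)|\leqq \sup|H|\cdot y(s)$; both steps rest on the same non-vanishing of $y'$ near $b$.
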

\begin{rem}
The proof in $\cite{doke2}$ requires $ y^\prime (s) \ne 0 $ near $ b $,
which was not shown there.
We,
however,
can show this in a similar manner to Lemma $3.1$ of this paper.
\end{rem}

We would like to show that the solution can be extended beyond $ b $.  Without loss of generality,
we may assume $ b = 0 $, because the system of \pref{Eq.(2)} and \pref{Eq.(3)} is invariant under the translation 
parallel to $x$-axis.
It follows from Proposition \ref{prop.2.1} that the mapping from $ y $ to $ s $ is one-to-one near $ 0 $,
and therefore the inverse function $ s = s(y) $ exists.
Now we rewrite our equation considering $ y $ as the independent variable.
To do this,
let us pose
$q = \frac{x^\prime}{y^\prime}$.
Then we have,
$$
	\frac{dq}{dy}
	=
	\frac 1 {{y^\prime}^{3}}
	\left\{
	( n - 1 ) H(s) - ( n - 2 ) \frac {x^\prime} {y}
	\right\}.
$$
By \pref{Eq.(3)} and $ x^\prime (0) = 0 $, we have $ y^\prime (0) = \pm 1 $. Hence, 
$ y^\prime (s)$ does not vanish, and $ \mbox{\rm sgn } y^\prime (s) = \mbox{\rm sgn } y^\prime (0) \ne 0 $ on a neighborhood of $s=0$.
From $ ( 1 + q^2 ) {y^\prime}^2 = 1 $,
it follows that
\[
	y^\prime (s)^{3}
	= \left( \mbox{\rm sgn } y^\prime (s) \right)
	\left\{ \left( y^\prime (s) \right)^2 \right\}^{ \frac 32 }
	= \left( \mbox{\rm sgn } y^\prime (0) \right)
	\left( 1 + q(s)^2 \right)^{ -\frac 32 }.
\]
Setting
$ \tilde H(y) = \left( \mbox{\rm sgn } y^\prime (0) \right) H( s(y) ) $,
we obtain
\begin{eqnarray*}
	\frac {dx} {dy}
	& \!\!\! = & \!\!\!
	q ,
	\\
	y \frac {dq} {dy}
	& \!\!\! = & \!\!\!
	- ( n - 2 ) q - ( n - 2) q^3
	+ ( n - 1 ) \tilde{H}(y) y ( 1 + q^2 )^{\frac 32} ,
\end{eqnarray*}
where $ x $ and $ q $ are unknown functions of $ y $.
\par
Let us consider the singular initial value problem
\begin{equation}
	\quad
	\left\{
	\begin{array}{l}
	\displaystyle{
	y \frac { dq } { dy }
	=
	- ( n - 2 ) q - ( n - 2 ) q^3
	+ ( n - 1 ) \tilde H (y) y \left( 1 + q^2 \right)^{ \frac 32 },
	\  \mbox{for} \ y > 0
	},
	\\
	q(0) = 0.
	\end{array}
	\right.
	\label{DiffEq(8)}
\end{equation}
Multiplying $ y^{ n - 3 } $ to the first equation of \pref{DiffEq(8)},
we see
\[
	\frac d { dy } \left( y^{ n-2 } q \right)
	=
	\left\{- ( n - 2 ) q^3
	+
	( n - 1 ) \tilde H (y) y \left( 1 + q^2 \right)^{ 3/2 }\right\} y^{ n - 3 } .
\]
Let us integrate the above equation on $[0,y]$.
Then,
by $ \left. y^{ n-2 } q(y) \right|_{ y=0 } = 0$,  
we have 
\[
	y^{ n-2 } q(y)
	=
	\int_0^y
	\left\{
	- ( n - 2 ) q ( \eta )^3
	+ ( n - 1 ) \tilde H ( \eta ) \eta \left( 1 + q( \eta )^2 \right)^{ 3/2 }
	\right\} \eta^{ n - 3 }
	d \eta ,
\]
which leads to the following integral equation
\begin{equation}
	q(y) = \Phi(q)(y),
	\label{IntEq}
\end{equation}
where we set
$$
	\Phi(q)(y)
	=
	y^{ 2-n }
	\int_0^y
	\left\{
	- ( n - 2 ) q ( \eta )^3
	+ ( n - 1 ) \tilde H ( \eta ) \eta \left( 1 + q( \eta )^2 \right)^{ 3/2 }
	\right\}
	\eta^{ n - 3 }
	d \eta .
$$
We shall find a fixed point of the mapping $\Phi$ in an appropriate class of functions.
To do it,
let us define a function space and its subclass by
\[
	X_{Y}
	=
	\left\{
	q \in C( 0 , Y ] \, | \, \| q \| _X < \infty
	\right\} ,
	\quad
	X_{Y,M}
	=
	\{ q \in X_Y \, | \, \| q \|_X \leqq M \} ,
\]
where $Y$ and $M$ are positive constants and
\[
	\| q \|_X
	=
	\sup_{ y \in ( 0 , Y ] } \left| \frac { q(y) } y \right| .
\]
$ X_{Y} $ is a Banach space with the norm $ \| \cdot \|_X $.
When $ q \in X_{Y,M} $,
we note that $ | q(y) | \leqq \| q \|_X | y | \to 0 $ as $ y \to +0 $.
Setting $ q(0) = 0 $,
$ q $ is an element of $ C [ 0 , Y ] $.
\begin{prop}
\begin{itemize}
\item[{\rm (i)}]
Suppose that $ \tilde H $ is bounded on $[0,Y]$.
For sufficiently large $ M $,
and small $ Y $, 
there exists a unique solution $q$ of the integral equation {\rm \pref{IntEq}} on $ X_{Y,M} $.
\item[{\rm (ii)}]
Suppose that $ \tilde H $ is bounded and continuous on $[0,Y]$.
Then the function $q$ obtained in the 
above {\rm (i)} is a solution of the initial value problem {\rm \pref{DiffEq(8)}}.
\end{itemize}
\label{prop.2.2}
\end{prop}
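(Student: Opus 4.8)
The plan is to establish (i) by Banach's fixed-point theorem, as announced in the introduction, and then to promote the fixed point to an actual solution of \pref{DiffEq(8)} in (ii) by differentiating the integral equation \pref{IntEq}. Throughout, set $K = \sup_{[0,Y]}|\tilde{H}|$; the only property of $q \in X_{Y,M}$ that I would feed into the estimates is the pointwise bound $|q(\eta)| \leq M\eta$. First I would check that $\Phi$ maps $X_{Y,M}$ into itself. Inserting $|q(\eta)| \leq M\eta$ and $(1+q(\eta)^2)^{3/2} \leq (1+M^2Y^2)^{3/2}$ into the definition of $\Phi$ and evaluating the elementary integrals $\int_0^y \eta^{n}\,d\eta$ and $\int_0^y \eta^{n-2}\,d\eta$, one is led to a bound of the form
\[
	\| \Phi(q) \|_X
	\leq
	\frac{(n-2)M^3}{n+1}\,Y^2
	+
	K\,(1+M^2Y^2)^{3/2} .
\]
The right-hand side tends to $K$ as $Y \to 0$ with $M$ fixed, so once $M$ is chosen with $M > K$ (this is the force of ``sufficiently large $M$''), it is $\leq M$ for all small enough $Y$; hence $\Phi(X_{Y,M}) \subset X_{Y,M}$. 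Continuity of $\Phi(q)$ on $(0,Y]$ is automatic, since the indefinite integral is continuous and $y^{2-n}$ is continuous for $y > 0$.

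Next I would show that $\Phi$ is a contraction on $X_{Y,M}$. For $q_1, q_2 \in X_{Y,M}$ the two nonlinearities are treated separately: the cubic difference by $q_1^3 - q_2^3 = (q_1-q_2)(q_1^2 + q_1 q_2 + q_2^2)$, giving $|q_1^3 - q_2^3| \leq 3M^2\eta^2|q_1-q_2|$, and the term $(1+q_1^2)^{3/2} - (1+q_2^2)^{3/2}$ by the mean value theorem applied to $f(q) = (1+q^2)^{3/2}$, whose derivative $f'(q) = 3q(1+q^2)^{1/2}$ obeys $|f'(q)| \leq 3MY(1+M^2Y^2)^{1/2}$ on the relevant range. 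Using $|q_1(\eta)-q_2(\eta)| \leq \|q_1-q_2\|_X\,\eta$ and integrating as before, one reaches
\[
	\| \Phi(q_1) - \Phi(q_2) \|_X
	\leq
	\left(
	\frac{3(n-2)M^2}{n+1}\,Y^2
	+
	\frac{3(n-1)KM}{n}\,(1+M^2Y^2)^{1/2}\,Y^2
	\right)
	\| q_1 - q_2 \|_X .
\]
Every term in the coefficient carries a positive power of $Y$, so with $M$ already fixed it is $< 1$ for $Y$ small. Shrinking $Y$ once more so that the self-map bound and the contraction bound hold together, Banach's theorem delivers a unique fixed point $q \in X_{Y,M}$, which is the assertion of (i).

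For (ii) I would rewrite \pref{IntEq} in its primitive form $y^{n-2}q(y) = \int_0^y G(\eta)\,d\eta$ with $G(\eta) = \{-(n-2)q(\eta)^3 + (n-1)\tilde{H}(\eta)\eta(1+q(\eta)^2)^{3/2}\}\eta^{n-3}$. Because $q \in C(0,Y]$ and $\tilde{H}$ is now assumed continuous, $G$ is continuous on $(0,Y]$, so the right-hand side is of class $C^1$ there and, dividing by the smooth nonvanishing factor $y^{n-2}$, so is $q$. Differentiating $y^{n-2}q(y) = \int_0^y G$ gives $(n-2)y^{n-3}q + y^{n-2}q' = G(y)$, and cancelling the common factor $y^{n-3}$ reproduces exactly the first equation of \pref{DiffEq(8)}. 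The initial condition $q(0)=0$ is contained in membership of $X_{Y,M}$, since $|q(y)| \leq \|q\|_X\,y \to 0$ as $y \to +0$.

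The main obstacle is the estimation in (i), and within it the handling of $(1+q^2)^{3/2}$: it must be bounded uniformly for the self-map step and Lipschitz-controlled for the contraction step, and one has to confirm that the resulting constants degenerate as $Y \to 0$ rather than as $M \to 0$. This is precisely what fixes the order of the quantifiers --- choose $M$ large first, then $Y$ small depending on $M$ --- and explains why both conditions can be met simultaneously. The integrations themselves are routine once the pointwise bounds $|q| \leq M\eta$ and $|q_1 - q_2| \leq \|q_1-q_2\|_X\,\eta$ have been substituted.
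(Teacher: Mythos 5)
Your proposal is correct and follows essentially the same route as the paper: Banach's fixed-point theorem on $X_{Y,M}$ via a self-map estimate and a contraction estimate using $|q(\eta)|\leqq M\eta$, followed by differentiating the primitive form $y^{n-2}q(y)=\int_0^y G(\eta)\,d\eta$ to recover \pref{DiffEq(8)}. The only cosmetic differences are that you split the Lipschitz estimate into an algebraic factorization of $q_1^3-q_2^3$ plus a mean value theorem argument for $(1+q^2)^{3/2}$, whereas the paper applies the mean value theorem to the whole nonlinearity $\phi$ at once, and you track explicit constants where the paper uses a generic $C$.
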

\begin{proof}
Suppose that $ \tilde H $ is bounded on $[0,Y]$.
From now on,
$C$ denotes a positive constant which depends on $n$ and 
$ \sup_y | \tilde H(y) | $,
but does not depend on $M$ and $Y$.
\par
First,
we show that $ \Phi $ is a mapping from $ X_{Y,M} $ to $ X_{Y,M} $ for some large $M$ and  small $Y$.
Take $ q \in X_{Y,M} $. Then we have $ \Phi (q) \in C( 0 , Y ] $.
We show $ \| \Phi (q) \|_X \leqq M $ as follows:
By using $ | q( \eta ) | \leqq M \eta $ for $ \eta \in ( 0 , Y ] $,
we see
\[
	\begin{array}{l}
	\displaystyle{
	\left|
	\left\{
	- ( n - 2 ) q ( \eta )^3
	+ ( n - 1 ) \tilde H ( \eta ) \eta \left( 1 + q( \eta )^2 \right)^{ 3/2 }
	\right\}
	\eta^{ n - 3 }
	\right|}
	\\
	\quad
	\leqq
	\displaystyle{
	C \left\{
	| q( \eta ) |^3	+ \eta \left( 1 + | q( \eta ) |^3 \right)
	\right\}
	\eta^{ n - 3 }
	\leqq
	C \left(
	M^3 \eta^n + \eta^{ n-2 } + M^3 \eta^{ n+1 }
	\right)
	}.
\end{array}
\]
Hence,
\[
	\left|\frac { \Phi (q) (y) } y \right|
	\leqq
	C y^{ 1-n }
	\int_0^y
	\left( M^3 \eta^n + \eta^{ n-2 } + M^3 \eta^{ n+1 }\right)
	d \eta
	\leqq
	C \left( M^3 y^2 + 1 + M^3 y^3 \right).
\]
Take $ M $ and $ Y $ such that
\[
	C \left( M^3 Y^2 + 1 + M^3 Y^3 \right) \leqq M .
\]
Then,
$ \| \Phi (q) \|_X \leqq M $.
\par
Next,
if necessary,
taking $Y$ much smaller,
we show that $ \Phi $ is a contraction mapping from $ X_{Y,M} $ to itself. 
Take $ q_1 $,
$ q_2 \in X_{Y,M} $.
Then,
\[
	\Phi ( q_1 ) (y) - \Phi ( q_2 ) (y)
	=
	y^{ 2-n }
	\int_0^y
	\left( \phi ( q_1 ( \eta ) ) - \phi ( q_2 ( \eta ) ) \right)
	\eta^{ n-3 } d \eta ,
\]
where
\[
	\phi ( q )
	=
	- ( n - 2 ) q^3
	+ ( n - 1 ) \tilde H( \eta ) \eta \left( 1 + q^2 \right)^{ 3/2 } .
\]
By the mean value theorem, 
there is a $ q_\ast $ between $ q_1 $ and  $ q_2 $,
such that
\[
	\phi ( q_1 ) - \phi ( q_2 )
	=
	\phi^\prime ( q_\ast ) ( q_1 - q_2 ) .
\]
We note that
\[
	| q_\ast ( \eta ) |
	\leqq
	\max\{ | q_1 ( \eta )| , | q_2 ( \eta ) | \}
	\leqq
	M \eta ,
\]
and also, since we have
$ \phi^\prime (q) = - 3 ( n - 2 ) q^2 + 3 ( n - 1 ) \tilde H ( \eta ) \eta \left( 1 + q^2 \right)^{ 1/2 } q $,
\[
	\begin{array}{rl}
	| \phi^\prime ( q_\ast ( \eta ) ) |
	\leqq
	& \!\!\!
	\displaystyle{
	C \left\{
	| q_\ast ( \eta ) |^2
	+ \eta \left( 1 + | q_\ast ( \eta ) |^2 \right)^{ 1/2 } | q_\ast ( \eta ) |
	\right\}
	}
	\\
	\leqq
	& \!\!\!
	\displaystyle{
	C \left(
	M^2 \eta^2 + \eta \left( 1 + M \eta \right) M \eta
	\right)
	=
	C \left( M^2 \eta^2 + M \eta^2 + M^2 \eta^3 \right)
	} .
	\end{array}
\]
Combining these two estimates,
we obtain
\[
	\begin{array}{rl}
	\displaystyle{
	\left|
	\frac { \Phi ( q_1 ) (y) - \Phi ( q_2 ) (y) } y
	\right|
	}
	\leqq
	& \!\!\!
	\displaystyle{
	C y^{1-n}
	\int_0^y
	\left( M^2 \eta^2 + M \eta^2 + M^2 \eta^3 \right)
	| q_1 ( \eta ) - q_2 ( \eta ) |
	\eta^{ n-3 }
	d \eta
	}
	\\
	\leqq
	& \!\!\!
	\displaystyle{
	C y^{1-n} \| q_1 - q_2 \|_X
	\int_0^y
	\left( M^2 \eta^2 + M \eta^2 + M^2 \eta^3 \right)
	\eta^{ n-2 }
	d \eta}
	\\
	\leqq
	& \!\!\!
	\displaystyle{
	C \left( M^2 y^2 + M y^2 + M^2 y^3 \right)\| q_1 - q_2 \|_X
	} .
\end{array}
\]
Consequently,
by choosing $Y$ so that $ C \left( M^2 Y^2 + M Y^2 + M^2 Y^3 \right) < 1 $,
Banach's fixed point theorem implies that there exists a unique fixed point $q$ of  $ \Phi $ on $X_{Y,M}$ which satisfies {\rm \pref{IntEq}}.
\par
To prove (ii),
let $ q $ be a solution of {\rm \pref{IntEq}}.
Then we have
\[
	y^{ n-2 } q(y)
	=
	\int_0^y \phi ( q( \eta ) ) \eta^{ n-3 } d \eta .
\]
Since $ q \in C[ 0 , Y ] $,
if $ \tilde H $ is continuous,
then $ \phi ( q( \eta) ) $ is also continuous on $ [0,Y] $.
Thus, 
the right hand side above is differentiable,
which implies that $q$ is also differentiable.
By taking the derivation of the formula above,
we have the first equation of {\rm \pref{DiffEq(8)}}.
By $ q \in X_{Y,M} $,
we have $q(0) = 0$.
\qed
\end{proof}
\par
By replacing \cite[Proposition 3.3]{doke2} to Proposition \ref{prop.2.2} of this paper, the  proof of \cite[Theorem 3.4]{doke2} can be used to prove the  following 
\begin{thm}
Let $H(s)$ be a continuous function on $\mathbb{R}$,
and fix an $s_{0} \in \mathbb{R}$.
Then,
for any $c>0$, and any real numbers $c^\prime$,
$d^\prime$ satisfying ${c^\prime}^2 + {d^\prime}^2 = 1$,
there exists a global solution curve $(x(s),y(s))$, for $ s \in \mathbb{R}$,
of the system {\rm \pref{Eq.(2)}} and {\rm \pref{Eq.(3)}} with the initial conditions $x(s_{0})=0$,
$y(s_{0})=c$,
$ x^\prime(s_{0})=c^\prime$,
$ y^\prime(s_{0})=d^\prime$.
\label{thm.2.1}
\end{thm}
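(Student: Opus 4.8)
The plan is to treat \pref{Eq.(2)}--\pref{Eq.(3)} as a continuation problem in which the only possible obstruction to global existence is the vanishing of $y$, and then to pass through each such vanishing point by means of Proposition \ref{prop.2.2}. This is exactly the step where \cite[Proposition 3.3]{doke2} gets replaced by Proposition \ref{prop.2.2}, so that the extension scheme of \cite[Theorem 3.4]{doke2} applies under the weaker hypothesis that $H$ is merely continuous.

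First I would observe that the unit-speed relation \pref{Eq.(3)} gives $|x'(s)| \leqq 1$ and $|y'(s)| \leqq 1$, so that $x$ and $y$ are Lipschitz; in particular the solution stays in a bounded region over any finite $s$-interval and cannot escape to infinity. Let $(s_-,s_+) \ni s_0$ be the maximal interval on which $y>0$. Since $x$ and $y$ are uniformly continuous on bounded intervals, $\lim_{s\to s_+} y(s)$ exists whenever $s_+<\infty$; and as long as this limit is positive, the coefficient $x'/y$ in \pref{Eq.(2)} stays bounded, so $x',y'$ are Lipschitz near $s_+$, their one-sided limits exist, and the usual local existence theorem continues the solution past $s_+$, contradicting maximality. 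Hence $s_+<\infty$ forces $y(s)\to 0$ as $s\to s_+$, and the whole matter reduces to continuing the curve through an axis point.

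Next, at such an endpoint $b:=s_+$, which may be taken to be $0$ by the translation invariance of \pref{Eq.(2)}--\pref{Eq.(3)}, I would apply Proposition \ref{prop.2.1} to get $x'(s)\to 0$, whence $y'(s)\to\pm1$ by \pref{Eq.(3)}; as noted in the Remark following Proposition \ref{prop.2.1} (and as in Lemma 3.1), $y'$ does not vanish near $b$, so $y$ is strictly monotone there and the inverse $s=s(y)$ exists. Taking $y$ as the independent variable and $q=x'/y'$ as the unknown produces precisely the singular initial value problem \pref{DiffEq(8)} with $q(0)=0$ and $\tilde H(y)=(\mbox{\rm sgn }y'(0))H(s(y))$ bounded and continuous near $y=0$. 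Proposition \ref{prop.2.2} then furnishes a unique $q\in C^1([0,Y])$ solving \pref{DiffEq(8)}, and from it one recovers the curve through $x'=qy'$ and $(1+q^2){y'}^2=1$, thereby extending the solution across the axis point as a $C^2$ solution of \pref{Eq.(2)}--\pref{Eq.(3)}.

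Finally I would globalize and rule out accumulation of axis points at a finite $s^\ast$. The key observation is that on a one-sided neighborhood of $s^\ast$ the continuous function $H$ is bounded, say by $K$, so Proposition \ref{prop.2.2} solves \pref{DiffEq(8)} on a fixed interval $[0,Y]$ with $Y=Y(n,K)$ independent of the particular crossing; on this interval $q$ is finite, hence $y'$ does not vanish and the curve climbs monotonically from $y=0$ to $y=Y$ before it can turn back. Since $|y'|\leqq 1$, each excursion between two consecutive zeros of $y$ has arc length at least $2Y$, which is incompatible with accumulation; thus only finitely many crossings occur on any bounded $s$-interval, and the maximal interval of existence must be all of $\mathbb{R}$, with the argument as $s\to s_-$ being identical. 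The main obstacle I anticipate is not the local solvability, which is exactly the content of Proposition \ref{prop.2.2}, but the careful verification that the reconstructed curve is a genuine $C^2$ solution of \pref{Eq.(2)}--\pref{Eq.(3)} through the singular axis point, together with the uniform arc-length lower bound for excursions that is needed to exclude accumulation.
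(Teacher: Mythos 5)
Your proposal is correct and takes essentially the same route as the paper: the paper's entire proof of Theorem \ref{thm.2.1} is to run the continuation scheme of \cite[Theorem 3.4]{doke2} with Proposition \ref{prop.2.2} substituted for \cite[Proposition 3.3]{doke2}, which is precisely the argument you reconstruct (reduction of the obstruction to axis crossings, passage through each crossing via the singular initial value problem \pref{DiffEq(8)}, and exclusion of accumulating crossings). The details you supply --- the Lipschitz bounds coming from \pref{Eq.(3)}, and the uniform $Y=Y(n,K)$ yielding the arc-length lower bound $2Y$ between consecutive zeros of $y$ --- are exactly what the paper's citation of \cite{doke2} is carrying.
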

\begin{rem}
The global solution curve in Theorem \ref{thm.2.1} is extended smoothly in $(x,y)$-plane with $y<0$ when it touches at $x$-axis. The global generating curve is obtained by the reflection of the solution curve with respect to the $ x $-axis.
\end{rem}
\section{Properties of generalized rotational hypersurfaces of $O(n-1)$-type}
In this section we shall apply Theorem \ref{thm.2.1} to study some properties of generalized rotational hypersurfaces of $O(n-1)$-type with non-constant mean curvature function $H(s)$.
We note that the interesting properties of those hypersurfaces with constant mean curvature were studied by Hsiang and Yu \cite{hsyu} in 1981.
\par
Given any continuous $H(s)$ on $\mathbb{R}$ and any $c>0$,
by Theorem \ref{thm.2.1} there exists uniquely a global solution curve $(x_{c}(s),y_{c}(s)), $ for $ s \in 
\mathbb{R}, $ of the system \pref{Eq.(2)} and \pref{Eq.(3)} with the initial conditions $x_{c}(0)=0$, $y_{c}(0)=c>0 $,
$ x_{c}'(0)=1 $,
$ y_{c}'(0)=0$.
Let $\Gamma_{c}=(x_{c}(s),|y_{c}(s)|)$. The curve $\Gamma_{c}$ is the generating curve of a   
generalized rotational hypersurface,
say $M_{c}$, of $O(n-1)$-type. 
 $\Gamma_{c}$ has possibly the singularity for the induced metric at $y_{c}(s)=0$, because the first 
 fundamental form of $M_{c}$ is the direct product  of $ds^{2}$ and the coformal metric of $S^{n-2}$ 
 with the conformal factor $y(s)^{2}$.
We prove
\begin{thm}  Let $H(s)$ be an absolutely continuous  function on $\mathbb{R}$ with  $H(0) >0$ such that  $H'(s) \geqq 0$ a.\ e.\ $s \in (0, \infty)$,
and $H'(s) \leqq 0 $ a.\ e.\ $s \in (-\infty,0)$.
Then,  for any $c$ satisfying $c>1/H(0)$, $y_{c}(s)$  is positive on 
 $\mathbb{R}$, and  $M_{c}$ is an immersed hypersurface in $\mathbb{R}^{n}$.
\label{thm.3.1}
\end{thm}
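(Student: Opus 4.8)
The plan is to exploit a monotone \emph{flux} quantity that generalizes the conserved force of constant--mean--curvature rotational hypersurfaces. I would introduce
\[
	G(s) = y_c(s)^{\,n-2}\,x_c'(s) - H(s)\,y_c(s)^{\,n-1}
\]
and first compute $G'(s)$. Differentiating directly and using \pref{Eq.(3)} in the form $x_c'x_c''+y_c'y_c''=0$ (so that $x_c''{y_c'}^2 - x_c'y_c'y_c'' = x_c''$), together with \pref{Eq.(2)} rewritten as $y_c x_c'' + (n-2)x_c'y_c' = (n-1)H y_c y_c'$, I expect every term to cancel except one, yielding the clean identity
\[
	G'(s) = -\,H'(s)\,y_c(s)^{\,n-1}
	\qquad \mbox{(a.e., where $y_c>0$).}
\]
Recognizing the right $G$ and checking that the arc-length constraint collapses the derivative to this single term is the crux of the argument and the step I expect to require the most care. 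Note also that \emph{absolute} continuity of $H$ is exactly what lets me recover $G$ from $G'$ by integration, and hence deduce monotonicity of $G$, even though $H'$ exists only almost everywhere.

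Next I would read off the sign information. At the symmetry point the initial data give $G(0) = c^{\,n-2}x_c'(0) - H(0)c^{\,n-1} = c^{\,n-2}\bigl(1 - cH(0)\bigr)$, which is strictly negative precisely because of the hypothesis $c > 1/H(0)$. The curvature hypotheses state $H'\geqq 0$ a.e.\ on $(0,\infty)$ and $H'\leqq 0$ a.e.\ on $(-\infty,0)$, so wherever $y_c>0$ the identity gives $G'\leqq 0$ on $(0,\infty)$ and $G'\geqq 0$ on $(-\infty,0)$. Hence $G$ attains its maximum at $s=0$, and $G(s)\leqq G(0)<0$ on any interval about $0$ on which $y_c$ stays positive.

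Then I would argue by contradiction that $y_c$ never reaches the $x$-axis. Suppose $y_c$ has a zero; since $y_c(0)=c>0$ and $y_c$ is continuous, there is a nearest zero $b$ (on the positive or negative side) with $y_c>0$ strictly between $0$ and $b$. On that open interval the solution is smooth and the previous paragraph applies, so $G(s)\leqq G(0)<0$ up to $b$. On the other hand, as $s\to b$ we have $y_c(s)\to 0$, and Proposition \ref{prop.2.1} gives $x_c'(s)\to 0$; since $H$ is continuous, hence bounded near $b$, both terms of $G$ tend to $0$, so $\lim_{s\to b}G(s)=0$. This contradicts $G\leqq G(0)<0$. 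Therefore $y_c(s)>0$ for every $s\in\mathbb{R}$.

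Finally, positivity of $y_c$ on all of $\mathbb{R}$ means the conformal factor $y_c(s)^2$ never vanishes, so the first fundamental form $ds^2 + y_c(s)^2\,g_{S^{n-2}}$ is everywhere nondegenerate; hence $\Gamma_c$ has no metric singularity and $M_c$ is an immersed hypersurface in $\mathbb{R}^n$, as claimed.
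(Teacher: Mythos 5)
Your proposal is correct and is essentially the paper's own argument repackaged in differential form: your flux $G(s)=y_{c}^{n-2}(s)x_{c}'(s)-H(s)y_{c}^{n-1}(s)$ is exactly the quantity the paper manipulates, since its proof starts from the integral identity $y_{c}^{n-2}x_{c}'=(n-1)\int_{0}^{s}H(t)y_{c}^{n-2}(t)y_{c}'(t)\,dt+c^{n-2}$ and then performs precisely the integration by parts that your identity $G'=-H'y_{c}^{n-1}$ encodes, so that both proofs terminate in the same contradiction at a first zero of $y_{c}$ (the paper's $c^{n-2}\geqq H(0)c^{n-1}$ is your $0=\lim_{s\to b}G(s)\leqq G(0)=c^{n-2}\bigl(1-cH(0)\bigr)<0$). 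One minor simplification: Proposition \ref{prop.2.1} is not needed for $\lim_{s\to b}G(s)=0$, because $|x_{c}'|\leqq 1$ from \pref{Eq.(3)} already forces $y_{c}^{n-2}x_{c}'\to 0$, which is all the paper invokes at that step.
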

\begin{proof}
By contraries, suppose that there exists an $s_{0} \in \mathbb{R}^{+}$ such that $y_{c}(s_{0})=0$ and $y_{c}(s) >0 \ \mbox{on} \ s \in [0,s_{0})$.
By \cite[(3.2)]{doke2} and the initial conditions of $\Gamma_{c}$, we have
  $$
  y_{c}^{n-2}(s)x_{c}'(s) = (n-1)\int_{0}^{s}H(t)y_{c}^{n-2}(t)y_{c}'(t) \, dt + c^{n-2},  \ \mbox{for} \ s 
  \in [0 , s_0].
  $$
   Since the left hand side of the above formula is zero at  $s=s_{0}$  by the assumption and 
   \pref{Eq.(3)},
   we have
\begin{eqnarray*}
 c^{n-2} \!\!\! & =
 & \!\!\! -(n-1) \int_{0}^{s_{0}}H(t)y_{c}^{n-2}(t)y_{c}'(t) \, dt \\
 & =
 & \!\!\! - \left[ H(t) y_{c}^{n-1}(t) \right]_{0}^{s_{0}} 
 + \int_{0}^{s_{0}}H'(t)y_{c}^{n-1}(t) \, dt  \\
 & \geqq
 & \!\!\! H(0)c^{n-1}.
 \end{eqnarray*}

This contradicts the assumption of $c$. If $s_{0} <0$, then we have the same contradiction,  proving 
Theorem \ref{thm.3.1}.
 \qed
\end{proof}
\par
\begin{rem}  We have also the following:
Let $H(s)$ be an absolutely continuous  function on $\mathbb{R}$ with $H(0) >0$ such that $ H'(s) \leqq 0$ a.\ e.\ $ s \in (0, \infty)$, and $H'(s) \geqq 0$ a.\ e.\ $s \in (-\infty,0)$. Then,
for any $c$ satisfying $0<c<1/H(0)$,
$y_{c}(s)$ is positive on $\mathbb{R}$,
and $M_{c}$ is an immersed hypersurface in $\mathbb{R}^{n}$. 
\label{rem.3.1}
\end{rem}
\par
Theorem \ref{thm.3.1} and  Remark \ref{rem.3.1} extend the results for constant mean curvature case \cite{ken1,hsyu}.
\par
To continue the study of those hypersurfaces with non-constant mean curvature $H(s)$, we make the asymptotic analysis of $\Gamma_{c}$ when $c \rightarrow  \infty$. To do it, let $\Gamma_{\infty}$ be the planar curve parametrized by  arc length such that the curvature is $-(n-1)H(s)$. Actually, a planar curve parametrized by arc length is determined by its curvature only up to a rigid motion.  $\Gamma_{\infty}$ is defined on $\mathbb{R}$ by the fundamental theorem of curve theory.  In fact, we have
\begin{equation}
\Gamma_{\infty} = \left(\int_{0}^{s}\cos \eta(u)du, -\int_{0}^{s}\sin \eta(u)du \right), \ \mbox{for} \  s \in \mathbb{R},
\label{Eq.(6)}
\end{equation}
where we set $\eta(u) = (n-1)\int_{0}^{u}H(t)dt$. Set $F_{c}(s)= y_{c}(s)y_{c}'(s)/c$ and $G_{c}(s) = y_{c}(s)x_{c}'(s)/c$. Then, 
\cite[Lemma 4.1]{doke2} implies
$$
\left\{
	\begin{array}{rlrl}
	F_{c}' = & \!\!\!
	\displaystyle{
	-(n-1)H(s) G_{c}
	+ \frac{1}{c} \left\{ 1 + \frac{(n-3)G_{c}^{2}}{F_{c}^{2} + G_{c}^{2}} \right\}
	}
	,
	&
	F_{c}(0) = & \!\!\! 0
	,
	\\
	G_{c}' = & \!\!\!
	\displaystyle{
	(n-1)H(s) F_{c}
	- \frac{1}{c} \left\{ \frac{(n-3)F_{c}G_{c}}{F_{c}^{2} + G_{c}^{2}} \right\}
	}
	,
	& G_{c}(0) = & \!\!\! 1
	,
	\end{array}
	\right.
$$
for $ s \in \mathbb{R} $.
We note that the functions $G_{c}^2/(F_{c}^2+G_{c}^2)$ and $F_{c}G_{c}/(F_{c}^2+G_{c}^2)$ are globally defined  and differentiable on $\mathbb{R}$. Let $(F_{\infty}, G_{\infty})$ be the unique 
solution to the system
$$
\left\{
\begin{array}{rlrl}
F_{\infty}'= & \!\!\!
-(n-1)H(s) G_{\infty},
&
F_{\infty}(0) = & \!\!\! 0 ,
\\
G_{\infty}'= & \!\!\! (n-1)H(s) F_{\infty},
&
G_{\infty}(0)= & \!\!\! 1.
\end{array}
\right.
$$
This is integrated as  
$
F_{\infty}(s)=-\sin \eta(s), \ \mbox{and} \ G_{\infty}(s)= \cos \eta(s).
$
With these conventions, we prove
\begin{thm} It holds that
$$
\lim_{c\rightarrow \infty}F_{c}(s) =F_{\infty}(s), \ \lim_{c\rightarrow \infty}G_{c}(s) =G_{\infty}(s),
$$
compactly uniformely with respect to $s \in \mathbb{R}$.
\label{thm.3.2}
\end{thm}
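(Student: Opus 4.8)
The plan is to work directly with the difference of the two systems by means of an energy estimate. First I would set $u_c = F_c - F_\infty$ and $v_c = G_c - G_\infty$, so that $u_c(0) = v_c(0) = 0$. Subtracting the limiting system from the system for $(F_c,G_c)$, the terms $-(n-1)H(s)G$ and $(n-1)H(s)F$ give a coupled linear part, and one is left with
\[
	u_c' = -(n-1)H(s)\,v_c + P_c, \qquad
	v_c' = (n-1)H(s)\,u_c + Q_c,
\]
where
\[
	P_c = \frac 1c\left\{1 + \frac{(n-3)G_c^2}{F_c^2 + G_c^2}\right\}, \qquad
	Q_c = -\frac 1c\cdot\frac{(n-3)F_c G_c}{F_c^2 + G_c^2}
\]
are the remainder terms carried over from the finite-$c$ system.

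The crucial observation is that the coupled part is skew, hence cancels in the energy. Computing $\tfrac12\frac{d}{ds}(u_c^2 + v_c^2) = u_c u_c' + v_c v_c'$, the contribution $-(n-1)H u_c v_c + (n-1)H u_c v_c$ vanishes identically, leaving only $u_c P_c + v_c Q_c$. At this point I would use that the ratios $G_c^2/(F_c^2+G_c^2)$ and $F_c G_c/(F_c^2+G_c^2)$ are globally defined and bounded; indeed, since $F_c = y_c y_c'/c$ and $G_c = y_c x_c'/c$, formula \pref{Eq.(3)} gives $F_c^2 + G_c^2 = y_c^2/c^2$, so these ratios are exactly $x_c'^2 \in [0,1]$ and $x_c' y_c' \in [-\tfrac12,\tfrac12]$. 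Consequently $|P_c|, |Q_c| \leqq C/c$ with $C = C(n)$ independent of $s$ and $c$, and by Cauchy--Schwarz $|u_c P_c + v_c Q_c| \leqq \frac Cc \sqrt{u_c^2 + v_c^2}$.

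To convert this into a pointwise bound I would regularize the Euclidean norm, setting $w_\varepsilon = \sqrt{u_c^2 + v_c^2 + \varepsilon^2}$, which is smooth with $w_\varepsilon' = (u_c u_c' + v_c v_c')/w_\varepsilon$, whence $|w_\varepsilon'| \leqq \frac Cc \sqrt{u_c^2+v_c^2}/w_\varepsilon \leqq C/c$. Integrating from $0$ to $s$ and using $w_\varepsilon(0) = \varepsilon$ gives $w_\varepsilon(s) \leqq \varepsilon + C|s|/c$, and letting $\varepsilon \to 0$ yields
\[
	\sqrt{u_c(s)^2 + v_c(s)^2} \leqq \frac{C|s|}c .
\]
Since the right-hand side tends to $0$ uniformly for $s$ in any compact interval as $c \to \infty$, both $F_c \to F_\infty$ and $G_c \to G_\infty$ compactly uniformly, which is the assertion.

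The main point is recognizing that the linear coupling is antisymmetric and therefore drops out of the energy identity; once this is seen, no Gronwall iteration is required and the explicit rate $C|s|/c$ follows at once. The only technical subtlety is the non-differentiability of $\sqrt{u_c^2+v_c^2}$ at its zeros, which the regularization $w_\varepsilon$ circumvents cleanly.
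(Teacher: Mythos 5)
Your proof is correct and follows essentially the same route as the paper's: the same decomposition $F_c = F_\infty + \tilde F_c$, $G_c = G_\infty + \tilde G_c$, the same key observation that the skew-symmetric coupling cancels in $\frac{d}{ds}(\tilde F_c^2 + \tilde G_c^2)$, and the same integration of the resulting differential inequality to get an $O(|s|/c)$ bound. The only differences are cosmetic refinements: you justify the boundedness of the ratio terms explicitly via $F_c^2 + G_c^2 = y_c^2/c^2$, and you handle the non-differentiability of $\sqrt{\tilde F_c^2 + \tilde G_c^2}$ at its zeros with an $\varepsilon$-regularization, two points the paper passes over silently.
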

\begin{proof}
Let us define $\tilde{F_{c}}$ and $\tilde{G_{c}}$ by $F_{c}=F_{\infty} + \tilde{F_{c}}$ and $G_{c}=G_{\infty} 
+ \tilde{G_{c}}$, respectively.
Then we get
$$
\left\{
	\begin{array}{rlrl}
\tilde{F_{c}}' = & \!\!\!
\displaystyle{
-(n-1)H(s)\tilde{G_{c}}
+ \frac{1}{c}\left\{ 1 + \frac{(n-3)G_{c}^{2}}{F_{c}^{2} + G_{c}^{2}} \right\}
}
,
&
\tilde{F_{c}}(0)= & \!\!\! 0, \\
\tilde{G_{c}}' = & \!\!\!
\displaystyle{
(n-1)H(s)\tilde{F_{c}}
- \frac{1}{c}\left\{ \frac{(n-3)F_{c}G_{c}}{F_{c}^{2} + G_{c}^{2}} \right\}
}
,
& \tilde{G_{c}}(0)= & \!\!\! 0.
	\end{array}
	\right.
$$
This implies
\begin{eqnarray*}
\frac{d}{ds} \left( \tilde{F_{c}}^2 + \tilde{G_{c}}^2 \right)
\!\!\! &=& \!\!\!
\frac{2}{c}\left\{\left( 1 + \frac{(n-3)G_{c}^{2}}{F_{c}^{2} + G_{c}^{2}}\right)\tilde{F_{c}} - \frac{(n-3) F_c G_c }{F_{c}^{2} + G_{c}^{2}}\tilde{G_{c}} \right\}  \\
 & \leqq & \!\!\!
 \frac{2 \sqrt 2 ( n - 2 ) }{c}\sqrt{\tilde{F_{c}}^2 + \tilde{G_{c}}^2}
 ,
 \end{eqnarray*}
where $ K $ is a constant independent of $ c $ and $ s $.
 Consequently, we have
 $$
 \tilde{F_{c}}^2 + \tilde{G_{c}}^2 \leqq \frac{ 2 ( n - 2 )^2 s^2 }{ c^2 }  \ \ \mbox{for} \ s \geq 0.$$
 The same estimate for $s<0$ follows from
 $$
 \frac{d}{ds} \left( \tilde{F_{c}}^2 + \tilde{G_{c}}^2 \right) 
 \geqq -\frac{2 \sqrt 2 ( n - 2 ) }{c}  \sqrt{\tilde{F_{c}}^2 + \tilde{G_{c}}^2},
 $$
 proving Theorem \ref{thm.3.2}.
 \qed
\end{proof}
\par
In view of $ F_{c}(s)^2 + G_{c}(s)^2 = y_{c}(s)^{2}/c^2 $,  $\Gamma_{c}$ has the following expression: for $c>0,\ s \in \mathbb{R}$,
$$
 \Gamma_{c}=\left( \int_{0}^{s}\frac{G_{c}(u)}{\sqrt{F_{c}(u)^2 + G_{c}(u)^2 }} \, du, 
 \int_{0}^{s}\frac{F_{c}(u)}{\sqrt{F_{c}(u)^2 + G_{c}(u)^2 }} \, du + c \right).
$$
 As the geometric application of Theorem 3.2 and the above formula, we have
 \begin{cor}
  In the limit $c \to \infty$,  the curves $\Gamma_{c} - (0,c)$ tend to $\Gamma_{\infty}$.
\label{cor.3.1}
 \end{cor}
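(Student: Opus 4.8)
The plan is to read off the convergence directly from the integral representation of $\Gamma_{c}$ displayed just before the statement, feeding in the compact uniform convergence $F_{c}\to F_{\infty}$, $G_{c}\to G_{\infty}$ supplied by Theorem \ref{thm.3.2}. Subtracting $(0,c)$, one has
$$
\Gamma_{c}-(0,c)
=
\left(
\int_{0}^{s}\frac{G_{c}(u)}{\sqrt{F_{c}(u)^{2}+G_{c}(u)^{2}}}\,du,\
\int_{0}^{s}\frac{F_{c}(u)}{\sqrt{F_{c}(u)^{2}+G_{c}(u)^{2}}}\,du
\right),
$$
whereas by \pref{Eq.(6)} together with $F_{\infty}=-\sin\eta$ and $G_{\infty}=\cos\eta$ the target curve is $\Gamma_{\infty}=\left(\int_{0}^{s}G_{\infty}(u)\,du,\ \int_{0}^{s}F_{\infty}(u)\,du\right)$. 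Thus it suffices to prove that the two integrands converge to $G_{\infty}$ and $F_{\infty}$ compactly uniformly in $s$, after which the conclusion follows by integrating.

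First I would treat the denominator. Since $F_{\infty}^{2}+G_{\infty}^{2}=\sin^{2}\eta+\cos^{2}\eta\equiv 1$, Theorem \ref{thm.3.2} gives $F_{c}^{2}+G_{c}^{2}\to 1$ compactly uniformly. Hence on any fixed compact interval $[-L,L]$ there is a $c_{0}=c_{0}(L)$ such that $F_{c}^{2}+G_{c}^{2}\geqq\frac{1}{2}$ for all $c\geqq c_{0}$, so that $\sqrt{F_{c}^{2}+G_{c}^{2}}$ is bounded away from zero there and converges uniformly to $1$. Combined with the uniform convergence and uniform boundedness of the numerators $G_{c}$ and $F_{c}$, the quotients $G_{c}/\sqrt{F_{c}^{2}+G_{c}^{2}}$ and $F_{c}/\sqrt{F_{c}^{2}+G_{c}^{2}}$ then converge uniformly on $[-L,L]$ to $G_{\infty}$ and $F_{\infty}$, respectively.

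Finally, writing $f_{c}$ for either integrand and $f_{\infty}$ for its limit, on $[-L,L]$ one estimates $\left|\int_{0}^{s}(f_{c}-f_{\infty})\,du\right|\leqq L\sup_{[-L,L]}|f_{c}-f_{\infty}|\to 0$ as $c\to\infty$, uniformly in $s\in[-L,L]$. This yields $\Gamma_{c}-(0,c)\to\Gamma_{\infty}$ compactly uniformly, as claimed.

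I expect no serious obstacle: the only point demanding care is that the common denominator equals $y_{c}^{2}/c^{2}$ and could \emph{a priori} degenerate where the generating curve meets the $x$-axis. The saving feature is that the limiting quantity $F_{\infty}^{2}+G_{\infty}^{2}$ is identically $1$, so on each compact $s$-interval the uniform convergence furnished by Theorem \ref{thm.3.2} keeps $F_{c}^{2}+G_{c}^{2}$ uniformly bounded below for large $c$; consequently the integrands remain regular in the limit and the passage to the limit under the integral sign is justified.
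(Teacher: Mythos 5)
Your proposal is correct and takes essentially the same approach as the paper: the paper's one-line proof simply observes that $\Gamma_c' \to \Gamma_\infty' = (\cos \eta(s), -\sin \eta(s))$ as $c \to \infty$ and integrates, which is exactly the argument you spell out. Your extra care with the denominator $\sqrt{F_c^2 + G_c^2}$ --- using $F_\infty^2 + G_\infty^2 \equiv 1$ and Theorem \ref{thm.3.2} to keep it uniformly bounded below on compact intervals for large $c$ --- fills in a detail the paper leaves implicit.
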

 \begin{proof}
 It follows from $\Gamma'_{c} \to 
 \Gamma_{\infty}' =  (\cos \eta(s),-\sin \eta(s))$ as $c \to \infty$, proving Corollary.
 \qed
\end{proof}
\par
Now, we shall derive the asymptotic expansion formulas of $F_{c}$ and $G_{c}$ which are applied to study periodic generating curves.
 Set
\[
	U(s) =
	\left( \begin{array}{rr}
	\cos \eta (s) & - \sin \eta (s) \\
	\sin \eta (s) & \cos \eta (s)
	\end{array} \right) .
\]
Since
\begin{eqnarray*}
	U^\prime(s) U(s)^{-1}  
	=
	( n - 1 ) H(s) 
	\left( \begin{array}{rr}
	0 & -1 \\
	1 & 0
	\end{array} \right)
	,
\end{eqnarray*}
we have
\[
	\begin{array}{rl}
	\displaystyle{
	\left\{ U(s)^{-1}
	\left( \begin{array}{c} F_c(s) \\ G_c(s) \end{array} \right)
	\right\}^\prime
	}
	= & \!\!\!
	\displaystyle{
	- U(s)^{-1} U(s)^\prime U^{-1}(s)
	\left( \begin{array}{c} F_c(s) \\ G_c(s) \end{array} \right)
	+
	U(s)^{-1}
	\left( \begin{array}{c} F_c^\prime(s) \\ G_c^\prime(s) \end{array} \right)
	}
	\\
	= & \!\!\!
	\displaystyle{
	U(s)^{-1}
	\left\{
	- ( n - 1 ) H(s)
	\left( \begin{array}{rr}
	0 & -1 \\
	1 & 0
	\end{array} \right)
	\left( \begin{array}{c} F_c(s) \\ G_c(s) \end{array} \right)
	+
	\left( \begin{array}{c} F_c^\prime(s) \\ G_c^\prime(s) \end{array} \right)
	\right\}
	}
	\\
	= & \!\!\!
	\displaystyle{
	\frac 1c
	U(s)^{-1}
	\left(
	\begin{array}{c}
	1 + ( n - 3 ) P_c(s) \\ ( n - 3 ) Q_c(s)
	\end{array} \right)
	}
	,
	\end{array}
\]
where
\[
	P_c(s) = \frac { G_c(s)^2 } { F_c(s)^2 + G_c(s)^2 } ,
	\quad
	Q_c(s) = - \frac { F_c(s) G_c(s) } { F_c(s)^2 + G_c(s)^2 } .
\]
Integrating this, we obtain
\begin{equation}
	\begin{array}{rl}
	\displaystyle{
	\left( \begin{array}{c} F_c (s) \\ G_c (s) \end{array} \right)
	}
	= & \!\!\!
	\displaystyle{
	\left( \begin{array}{c} F_\infty (s) \\ G_\infty (s) \end{array} \right)
	}
	\\
	& \quad
	\displaystyle{
	+ \,
	\frac 1c U(s)\int_0^s
	U^{-1} (t)
	\left\{  \left(\begin{array}{c} 1 \\ 0 \end{array} \right)
	+  
	\frac{(n-3)G_{c}(t)}{F_{c}(t)^{2}+G_{c}(t)^{2}} \left( \begin{array}{c} G_c (t) \\ -F_c (t) \end{array}
	\right) \right\} dt}
	.
	\end{array}
	\label{Eq.(7)}
\end{equation}
Setting $\epsilon =1/c$, we have the following asymptotic expansions of $F_{c}(s)$ and $G_{c}(s)$.
\begin{thm} For a continuous $H(s)$ on $\mathbb{R}$, it holds that
\[
	F_c (s) = \sum_{ k=0 }^2 \epsilon^k F_\infty^{(k)} (s) + {\cal O} ( \epsilon^{ 3 } ) ,
	\quad
	G_c (s) = \sum_{ k=0 }^2 \epsilon^k G_\infty^{(k)} (s) + {\cal O} ( \epsilon^{ 3 } ) ,
\]
where 
\begin{itemize}
\item[{\rm (i)}]
$ \displaystyle{
\left(
\begin{array}{c}
F_{\infty}^{(0)}(s)  \\
G_{\infty}^{(0)}(s) 
\end{array}
\right)
=
\left(
\begin{array}{c}
F_{\infty}(s)  \\
G_{\infty}(s) 
\end{array}
\right)
} $,
\item[{\rm (ii)}]
$ \displaystyle{
\left(
\begin{array}{c}
F_{\infty}^{(1)}(s)  \\
G_{\infty}^{(1)}(s) 
\end{array}
\right)
=
U(s) \int_{0}^{s}\left(
\begin{array}{r}
(n-2) \cos \eta(t)  \\
-\sin \eta(t)
\end{array}
\right) dt
} $,
\item[{\rm (iii)}]
$ \displaystyle{
\left(
\begin{array}{c}
F_{\infty}^{(2)}(s)  \\
G_{\infty}^{(2)}(s) 
\end{array}
\right)
=
(n-2)(n-3)U(s) \int_{0}^{s} \left(\int_{0}^{t} \cos \eta(u) du
\left(
\begin{array}{r}
 \sin \eta(t)  \\
-\cos \eta(t) 
\end{array}
\right) \right)dt
} $.
\end{itemize}
\label{thm.3.3}
\end{thm}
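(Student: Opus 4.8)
The plan is to treat $\epsilon = 1/c$ as a small parameter and to bootstrap the integral equation \pref{Eq.(7)} order by order, starting from the zeroth-order estimate already furnished by Theorem \ref{thm.3.2}. Writing the unknown as the column vector ${}^t(F_c,G_c)$ and its limit as ${}^t(F_\infty,G_\infty)={}^t(-\sin\eta,\cos\eta)$, equation \pref{Eq.(7)} reads
\[
	\left( \begin{array}{c} F_c(s) \\ G_c(s) \end{array} \right)
	=
	\left( \begin{array}{c} F_\infty(s) \\ G_\infty(s) \end{array} \right)
	+ \epsilon\, U(s) \int_0^s U(t)^{-1}
	\left\{ \left( \begin{array}{c} 1 \\ 0 \end{array} \right)
	+ (n-3)\, N(F_c(t),G_c(t)) \right\} dt ,
\]
where I abbreviate $N(F,G)=\frac{G}{F^2+G^2}\,{}^t(G,-F)$. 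The decisive structural feature is that $N$ is homogeneous of degree zero in $(F,G)$ and is smooth near ${}^t(F_\infty,G_\infty)$, because $F_\infty^2+G_\infty^2\equiv 1$, while $F_c^2+G_c^2=y_c^2/c^2$ stays uniformly close to $1$ on each compact $s$-interval once $c$ is large; this keeps every quotient bounded and the Taylor expansions legitimate.

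First I would record the zeroth order: Theorem \ref{thm.3.2}, or more precisely the estimate $\tilde F_c^{\,2}+\tilde G_c^{\,2}\leqq 2(n-2)^2 s^2/c^2$ established in its proof, gives ${}^t(F_c-F_\infty,G_c-G_\infty)={\cal O}(\epsilon)$ compactly uniformly. Substituting this into the integrand and using that $N$ is Lipschitz near ${}^t(F_\infty,G_\infty)$ yields the first-order step: replacing $N(F_c,G_c)$ by $N(F_\infty,G_\infty)$ costs only ${\cal O}(\epsilon)$ inside the integral, hence ${\cal O}(\epsilon^2)$ after the prefactor $\epsilon$. The coefficient in (ii) is then obtained from the two rotation identities $U(t)^{-1}\,{}^t(1,0)={}^t(\cos\eta,-\sin\eta)$ and $U(t)^{-1}\,{}^t(\cos\eta,\sin\eta)={}^t(1,0)$, together with $N(F_\infty,G_\infty)=\cos\eta\,{}^t(\cos\eta,\sin\eta)$; these collapse the bracket to ${}^t((n-2)\cos\eta,-\sin\eta)$, which is exactly formula (ii).

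For the second order I would iterate once more. Writing ${}^t(F_c-F_\infty,G_c-G_\infty)=\epsilon\,{}^t(F_\infty^{(1)},G_\infty^{(1)})+{\cal O}(\epsilon^2)$ and Taylor-expanding, one has $N(F_c,G_c)-N(F_\infty,G_\infty)=\epsilon\,DN(F_\infty,G_\infty)[{}^t(F_\infty^{(1)},G_\infty^{(1)})]+{\cal O}(\epsilon^2)$, so the $\epsilon^2$ contribution to ${}^t(F_c,G_c)$ is $(n-3)\,U(s)\int_0^s U(t)^{-1}DN(F_\infty,G_\infty)[{}^t(F_\infty^{(1)},G_\infty^{(1)})]\,dt$. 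Here the degree-zero homogeneity pays off: since $N$ depends only on the polar angle $\psi$ of $(F,G)$ and $DN[{}^t(F_\infty,G_\infty)]=0$ by Euler's relation, only the angular variation $\delta\psi=F_\infty G_\infty^{(1)}-G_\infty F_\infty^{(1)}$ enters. Inserting formula (ii) produces the key cancellation $\delta\psi=-(n-2)\int_0^t\cos\eta(u)\,du$, and one further rotation identity turns $U(t)^{-1}\frac{dN}{d\psi}$ into ${}^t(\sin\eta,-\cos\eta)$; assembling these gives precisely formula (iii). The remainder is ${\cal O}(\epsilon^3)$ because, subtracting the two computed terms, the quadratic Taylor remainder of $N$ and the ${\cal O}(\epsilon^2)$ error in ${}^t(F_c-F_\infty,G_c-G_\infty)$ are each ${\cal O}(\epsilon^2)$ after one integration over the compact interval, and carry an extra factor $\epsilon$.

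I expect the \emph{main obstacle} to be the bookkeeping of the second-order term: the trigonometric cancellations that reduce $DN(F_\infty,G_\infty)[{}^t(F_\infty^{(1)},G_\infty^{(1)})]$ to the single scalar $\int_0^t\cos\eta\,du$ multiplied by a clean unit vector are where the real work lies, and it is essential to exploit the homogeneity of $N$ through the angular variable $\psi$ rather than to differentiate the quotient $G/(F^2+G^2)$ head-on. The only analytic subtlety is to keep every ${\cal O}$ uniform on compact sets; this follows from the uniform lower bound $F_c^2+G_c^2\geqq \tfrac12$ on $[-S,S]$ for $c$ large, together with the compactly uniform convergence supplied by Theorem \ref{thm.3.2}.
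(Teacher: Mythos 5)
Your proposal is correct, and its skeleton is the same as the paper's: both arguments bootstrap the integral identity \pref{Eq.(7)} order by order, prove (ii) by a Lipschitz estimate on the nonlinearity $N(F,G)=\frac{G}{F^{2}+G^{2}}\,{}^t(G,-F)$ (made uniform on compact intervals by the lower bound on $F_c^{2}+G_c^{2}$ coming from Theorem \ref{thm.3.2}), and obtain (iii) from the term $(n-3)\,U(s)\int_0^s U^{-1}(t)\,DN(F_\infty,G_\infty)[{}^t(F_\infty^{(1)},G_\infty^{(1)})]\,dt$. Where you genuinely differ is in how this linearization is evaluated: the paper justifies exchanging the limit $\epsilon\to0$ with the integration via the dominated convergence theorem and then differentiates the quotient $G_c/(F_c^{2}+G_c^{2})$ with respect to $\epsilon$ \emph{head-on}, collapsing the resulting $2\times2$ matrix by trigonometric identities --- precisely the computation you say one should avoid, and it does go through; you instead exploit the degree-zero homogeneity of $N$, so that Euler's relation kills the radial derivative and only the angular variation $\delta\psi=F_\infty G_\infty^{(1)}-G_\infty F_\infty^{(1)}=-(n-2)\int_0^t\cos\eta(u)\,du$ survives. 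Both routes land on the same formula; yours shortens the bookkeeping, and your Taylor-remainder formulation also delivers the stated ${\cal O}(\epsilon^{3})$ bound slightly more directly than the paper's limit-of-difference-quotients formulation. One sign slip to fix: with your conventions $U(t)^{-1}\frac{dN}{d\psi}={}^t(-\sin\eta,\cos\eta)$, not ${}^t(\sin\eta,-\cos\eta)$; it is the cancellation of that minus sign against the one in $\delta\psi$ that produces (iii), whereas the two intermediate factors as you state them multiply to the negative of (iii).
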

\begin{proof}
(i) follows from Theorem \ref{thm.3.2}.
To prove (ii),
we compute formally,
by \pref{Eq.(7)},
 \[
	\begin{array}{rl}
	\displaystyle{
	\left( \begin{array}{c}
	F_\infty^{(1)} (s) \\ G_\infty^{(1)} (s)
	\end{array} \right)
	}
	= & 
	\displaystyle{
	\lim_{ \epsilon \to 0 }
	\frac 1 \epsilon
	\left\{
	\left( \begin{array}{c}
	F_c (s) \\ G_c (s)
	\end{array} \right)
	-
	\left( \begin{array}{c}
	F_\infty^{(0)} (s) \\ G_\infty^{(0)} (s)
	\end{array} \right)
	\right\}
	}
	\\
	= & 
	\displaystyle{
	U(s) \int_0^s U^{-1} (t)
	\left\{
	\left( \begin{array}{c} 1 \\ 0 \end{array} \right)
	+
	\frac { ( n - 3 ) G_\infty (t) }
	{ F_\infty (t)^2 + G_\infty (t)^2 }
	\left( \begin{array}{r}
	G_\infty (t) \\ - F_\infty (t)
	\end{array} \right)
	\right\}
	dt
	}
	\\
	= & 
	\displaystyle{
	U(s) \int_0^s
	\left( \begin{array}{r}
	( n - 2 ) \cos \eta (t) \\ - \sin \eta (t)
	\end{array} \right)
	dt
	}
	.
	\end{array}
\]
 It holds for this that
\[
	\begin{array}{l}
	\displaystyle{
	\left( \begin{array}{c}
	F_c (s) \\ G_c (s)
	\end{array} \right)
	-
	\left( \begin{array}{c}
	F_\infty^{(0)} (s) \\ G_\infty^{(0)} (s)
	\end{array} \right)
	-
	\epsilon
	\left( \begin{array}{c}
	F_\infty^{(1)} (s) \\ G_\infty^{(1)} (s)
	\end{array} \right)
	}
	\\
	\quad
	=
	\displaystyle{
	\epsilon ( n - 3 ) U(s) \int_0^s U^{-1} (t)
	\left\{
	\frac { G_c (t) } { F_c (t)^2 + G_c (t)^2 }
	\left( \begin{array}{r}
	G_c (t) \\ - F_c (t)
	\end{array} \right)
	\right.
	}
	\\
	\qquad \qquad
	\qquad \qquad
	\qquad \qquad
	\qquad \qquad
	\displaystyle{
	\left.
	- \,
	\frac { G_\infty (t) } { F_\infty (t)^2 + G_\infty (t)^2 }
	\left( \begin{array}{r}
	G_\infty (t) \\ - F_\infty (t)
	\end{array} \right)
	\right\}
	dt
	}
	.
	\end{array}
\]
For any compact interval $I$,
there exists $C_{I}$ such that
\begin{eqnarray*}
&& \sup_{t \in I} \left|\frac{G_{c}(t)}{F_{c}(t)^2 + G_{c}(t)^2} \left(
\begin{array}{r}
G_{c}(t)  \\
-F_{c}(t) 
\end{array}
\right) - \frac{G_{\infty}(t)}{F_{\infty}(t)^2+G_{\infty}(t)^2}\left(
\begin{array}{r}
G_{\infty} (t)  \\
- F_{\infty}(t) 
\end{array}
\right) \right| \\
&& \leq C_{I}\left(\sup_{t \in I}|G_{c}(t) 
- G_{\infty}(t)| + \sup_{t \in I} |F_{c}(t) - F_{\infty}(t)|  \right) \leq C_{I} \epsilon.
\end{eqnarray*}
 This estimate above implies that
 $$
\left(
\begin{array}{c}
F_{c}(s)  \\
G_{c}(s) 
\end{array}
\right)  - \left(
\begin{array}{c}
F_{\infty}^{(0)}(s)  \\
G_{\infty}^{(0)}(s) 
\end{array}
\right) - \epsilon U(s)\int_{0}^{s}\left(
\begin{array}{c}
(n-2)\cos \eta(t)  \\
-\sin \eta(t) 
\end{array}
\right)dt = {\cal O} ( \epsilon^{ 2 } )
$$
and that the convergence
$$
\frac{1}{\epsilon}\left\{ \left(
\begin{array}{c}
F_{c}(s)  \\
G_{c}(s) 
\end{array}
\right)  - \left(
\begin{array}{c}
F_{\infty}^{(0)}(s)  \\
G_{\infty}^{(0)}(s) 
\end{array}
\right) \right\} \to   U(s)\int_{0}^{s}\left(
\begin{array}{c}
(n-2)\cos \eta(t)  \\
-\sin \eta(t) 
\end{array}
\right)dt \ \  \mbox{as} \ \ \epsilon \rightarrow 0
$$
is compactly uniform, which proves (ii).
\par
For the proof of (iii),
 the above consideration yields 
\[
	\begin{array}{l}
	\displaystyle{
	\left( \begin{array}{c}
	F_\infty^{(2)} (s) \\ G_\infty^{(2)} (s)
	\end{array} \right)
	=
	\lim_{ \epsilon \to 0 }
	\frac 1 { \epsilon^2 }
	\left\{
	\left( \begin{array}{c}
	F_c (s) \\ G_c (s)
	\end{array} \right)
	-
	\left( \begin{array}{c}
	F_\infty^{(0)} (s) \\ G_\infty^{(0)} (s)
	\end{array} \right)
	-
	\epsilon
	\left( \begin{array}{c}
	F_\infty^{(1)} (s) \\ G_\infty^{(1)} (s)
	\end{array} \right)
	\right\}
	}
	\\
	\quad
	=
	\displaystyle{
	\lim_{ \epsilon \to 0 }
	\frac { n - 3 } \epsilon U(s) \int_0^s U^{-1} (t)
	\left\{
	\frac { G_c (t) } { F_c (t)^2 + G_c (t)^2 }
	\left( \begin{array}{r}
	G_c (t) \\ - F_c (t)
	\end{array} \right)
	\right.
	}
	\\
	\qquad \qquad
	\qquad \qquad
	\qquad \qquad
	\qquad \qquad
	\displaystyle{
	\left.
	- \,
	\frac { G_\infty (t) } { F_\infty (t)^2 + G_\infty (t)^2 }
	\left( \begin{array}{r}
	G_\infty (t) \\ - F_\infty (t)
	\end{array} \right)
	\right\}
	dt
	}
	.
	\end{array}
\]
For any compact interval $I$, there exists $C_{I}$ such that
\[
	\begin{array}{l}
	\displaystyle{
	\sup_{ t \in I }
   \frac 1 \epsilon 
    \left|
	\frac { G_c (t) } { F_c (t)^2 + G_c (t)^2 }
	\left( \begin{array}{r}
	G_c (t) \\ - F_c (t)
	\end{array} \right)
	-
	\frac { G_\infty (t) } { F_\infty (t)^2 + G_\infty (t)^2 }
	\left( \begin{array}{r}
	G_\infty (t) \\ - F_\infty (t)
	\end{array} \right)
	\right|
	}
	\\
	\quad
	\leqq
	\displaystyle{
	 \frac { C_I } \epsilon 
	\left(	\sup_{ t \in I } \left| G_c (t) - G_\infty (t) \right| + 
	\sup_{ t \in I } \left| F_c (t) - F_\infty (t) \right|
	\right)
	\leqq
	C_I
	}
	.
	\end{array}
\]
Therefore we can exchange the order of the limit as $ \epsilon \to 0 $ and the intergation by the dominated convergence theorem,
and obtain
\[
	\begin{array}{l}
	\displaystyle{
	\left( \begin{array}{c}
	F_\infty^{(2)} (s) \\ G_\infty^{(2)} (s)
	\end{array} \right)
	}
	\\
	\quad
	=
	\displaystyle{
	( n - 3 ) U(s) \int_0^s U^{-1} (t)
	\lim_{ \epsilon \to 0 }
	\frac 1 \epsilon
	\left\{
	\frac { G_c (t) } { F_c (t)^2 + G_c (t)^2 }
	\left( \begin{array}{r}
	G_c (t) \\ - F_c (t)
	\end{array} \right)
	\right.
	}
	\\
	\qquad \qquad
	\qquad \qquad
	\qquad \qquad
	\qquad \qquad
	\displaystyle{
	\left.
	- \,
	\frac { G_\infty (t) } { F_\infty (t)^2 + G_\infty (t)^2 }
	\left( \begin{array}{r}
	G_\infty (t) \\ - F_\infty (t)
	\end{array} \right)
	\right\}
	dt
	}
	\\
	\quad
	=
	\displaystyle{
	( n - 3 ) U(s) \int_0^s U^{-1} (t)
	\left.
	\frac \partial { \partial \epsilon }
	\left\{
	\frac { G_c (t) } { F_c (t)^2 + G_c (t)^2 }
	\left( \begin{array}{r}
	G_c (t) \\ - F_c (t)
	\end{array} \right)
	\right\}
	\right|_{ \epsilon = 0 }
	dt
	}
	.
	\end{array}
\]
Since
\[
	\begin{array}{l}
	\displaystyle{
	\left.
	\frac \partial { \partial \epsilon }
	\left\{
	\frac { G_c (t) } { F_c (t)^2 + G_c (t)^2 }
	\left( \begin{array}{r}
	G_c (t) \\ - F_c (t)
	\end{array} \right)
	\right\}
	\right|_{ \epsilon = 0 }
	}
	\\
	\quad
	=
	\displaystyle{
	\left\{
	\frac { G_\infty^{(1)} (t) }
	{ F_\infty^{(0)} (t)^2 + G_\infty^{(0)} (t)^2 }
	-
	\frac { 2 G_\infty^{(0)} (t)
	\left(
	F_\infty^{(0)} (t) F_\infty^{(1)} (t)
	+
	G_\infty^{(0)} (t) G_\infty^{(1)} (t)
	\right) }
	{ \left( F_\infty^{(0)} (t)^2 + G_\infty^{(0)} (t)^2 \right)^2 }
	\right\}
	\left( \begin{array}{r}
	G_\infty^{(0)} (t) \\ - F_\infty^{(0)} (t)
	\end{array} \right)
	}
	\\
	\quad \qquad
	\displaystyle{
	+ \,
	\frac { G_\infty^{(0)} (t) }
	{ F_\infty^{(0)} (t)^2 + G_\infty^{(0)} (t)^2 }
	\left( \begin{array}{r}
	G_\infty^{(1)} (t) \\ - F_\infty^{(1)} (t)
	\end{array} \right)
	}
	\\
	\quad
	=
	\displaystyle{
	\left( \begin{array}{rr}
	- 2 G_\infty^{(0)} (t)^2 F_\infty^{(0)} (t) , &
	 G_\infty^{(0)} (t) \left( 1 - 2G_\infty^{(0)} (t)^2 \right) \\
	G_\infty^{(0)} (t) \left( 2 F_\infty^{(0)} (t)^2 - 1 \right), &
	F_\infty^{(0)} (t) \left( 2 G_\infty^{(0)} (t)^2 - 1 \right)
	\end{array} \right)
	\left( \begin{array}{c}
	F_\infty^{(1)} (t) \\ G_\infty^{(1)} (t)
	\end{array} \right)
	}
	\\
	\quad
	=
	\displaystyle{
	\left( \begin{array}{rr}
	\cos \eta (t) \sin 2 \eta (t) &
	\sin \eta (t) \sin 2 \eta (t) \\
	- \cos \eta (t) \cos 2 \eta (t) &
	- \sin \eta (t) \cos 2 \eta (t)
	\end{array} \right)
	\left( \begin{array}{c}
	F_\infty^{(1)} (t) \\ G_\infty^{(1)} (t)
	\end{array} \right)
	}
	\\
	\quad
	=
	\displaystyle{
	\left( \begin{array}{r}
	\sin 2 \eta (t) \\ - \cos 2 \eta (t)
	\end{array} \right)
	( \cos \eta (t) \ \ \sin \eta (t) )
	U(t) \int_0^t
	\left( \begin{array}{r}
	( n - 2 ) \cos \eta (u) \\ - \sin \eta (u)
	\end{array} \right)
	du
	}
	\\
	\quad
	=
	\displaystyle{
	\left( \begin{array}{r}
	\sin 2 \eta (t) \\ - \cos 2 \eta (t)
	\end{array} \right)
	( 1 \ \ 0 )
	\int_0^t
	\left( \begin{array}{r}
	( n - 2 ) \cos \eta (u) \\ - \sin \eta (u)
	\end{array} \right)
	du
	}
	\\
	\quad
	=
	\displaystyle{
	( n - 2 )
	\left( \begin{array}{r}
	\sin 2 \eta (t) \\ - \cos 2 \eta (t)
	\end{array} \right)
	\int_0^t
	\cos \eta (u)
	\, du
	}
	,
	\end{array}
\]
we have
\[
	\begin{array}{rl}
	\displaystyle{
	\left( \begin{array}{c}
	F_\infty^{(2)} (s) \\ G_\infty^{(2)} (s)
	\end{array} \right)
	}
	= & \!\!\!
	\displaystyle{
	( n - 2 ) ( n - 3 ) U(s) \int_0^s
	U^{-1} (t)
	\left( \begin{array}{r}
	\sin 2 \eta (t) \\ - \cos 2 \eta (t)
	\end{array} \right)
	\int_0^t
	\cos \eta (u) \, du
	dt
	}
	\\
	= & \!\!\!
	\displaystyle{
	( n - 2 ) ( n - 3 ) U(s) \int_0^s
	\left( \begin{array}{r}
	\sin \eta (t) \\ - \cos \eta (t)
	\end{array} \right)
	\int_0^t
	\cos \eta (u) \, du
	dt
	}
	,
	\end{array}
\]
proving Theorem \ref{thm.3.3}.
\qed
\end{proof}

\begin{rem}
Similary we can show, for any $ K \in \mathbb{N} $,
\[
	F_c (s) = \sum_{ k=0 }^K \epsilon^k F_\infty^{(k)} (s) + {\cal O} ( \epsilon^{ K+1 } ) ,
	\quad
	G_c (s) = \sum_{ k=0 }^K \epsilon^k G_\infty^{(k)} (s) + {\cal O} ( \epsilon^{ K+1 } ) ,
\]
where, $ k \geqq 2 $, 
\[
	\displaystyle{
	\left( \begin{array}{c}
	F_\infty^{(k)} (s) \\ G_\infty^{(k)} (s)
	\end{array} \right)
	}
	\\
	\quad
	=
	\displaystyle{
	\frac { n - 3 } { ( k - 1 ) ! }
	U(s) \int_0^s U^{-1} (t)
	\left.
	\frac { \partial^{ k-1 } } { \partial \epsilon^{ k-1 } }
	\left\{
	\frac { G_c (t) } { F_c (t)^2 + G_c (t)^2 }
	\left( \begin{array}{r}
	G_c (t) \\ - F_c (t)
	\end{array} \right)
	\right\}
	\right|_{ \epsilon = 0 }
	dt.
}
\]
\end{rem}

\par
Now we shall study periodicity of the family  $\{\Gamma_{c}\}$.  If $H(s)$ is non-zero constant, then $\Gamma_{c}$ is 
periodic  for any $c>0$  \cite{del,hsyu} and $\Gamma_{\infty}$ is a round circle with the curvature 
$(n-1)|H(s)|$. The period of $\Gamma_{c}$ depends on the initial condition of the genrating curve when $n>3$. If $H(s)$ is not constant and $n>3$, then the situation is different from the constant mean curvature case and also 2-dimensional case \cite{ken2}.

\begin{lem} Suppose that for any $c>0$, $\Gamma_{c}$ is periodic with period $L(c)$. If $H(s)$ and $L(c)$ are differentiable for $s$ and $c$ respectively,  then one of them is constant.
\label{lem.3.1}
\end{lem}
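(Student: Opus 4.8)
The plan is to prove the stronger fact that $L(c)$ is itself a period of the function $H$ for \emph{every} $c>0$, and then to invoke the rigidity of the period set of a continuous, non-constant, periodic function. First I would unwind the hypothesis: that $\Gamma_c$ is periodic with period $L(c)$ means that $y_c$, $y_c'$ and $x_c'$ are each $L(c)$-periodic in $s$ (the profile repeats up to a translation parallel to the $x$-axis). Consequently $F_c=y_cy_c'/c$, $G_c=y_cx_c'/c$, the quantity $F_c^2+G_c^2=y_c^2/c^2$, the bounded quotients $G_c^2/(F_c^2+G_c^2)$ and $F_cG_c/(F_c^2+G_c^2)$, and the derivatives $F_c',G_c'$ are all $L(c)$-periodic.

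The crux, and the step I expect to carry all the content, is the following. I would differentiate the identities $F_c(s+L(c))=F_c(s)$ and $G_c(s+L(c))=G_c(s)$ in $s$ and write the differential system satisfied by $(F_c,G_c)$ twice, once at $s$ and once at $s+L(c)$. Every term except $H$ is $L(c)$-periodic, so subtracting the two copies of each equation leaves
\[
  \bigl(H(s+L(c))-H(s)\bigr)G_c(s)=0,\qquad
  \bigl(H(s+L(c))-H(s)\bigr)F_c(s)=0,
\]
for all $s$. The only delicate point is that $(F_c,G_c)$ must not vanish on a whole interval; this is controlled by $F_c^2+G_c^2=y_c^2/c^2$, which vanishes only at the isolated, transversal intersections of the generating curve with the $x$-axis (there $y_c'=\pm1\neq0$ by Proposition \ref{prop.2.1} and \pref{Eq.(3)}). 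Hence $(F_c,G_c)\neq(0,0)$ off a discrete set, so $H(s+L(c))=H(s)$ there, and by continuity of $H$ for all $s$; that is, $L(c)$ is a genuine period of $H$ for every $c>0$.

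If $H$ is constant the lemma holds, so assume $H$ is non-constant. The set $\Pi=\{\,p\in\mathbb{R}\mid H(\cdot+p)\equiv H\,\}$ is a subgroup of $(\mathbb{R},+)$, and it is closed because $H$ is continuous; a closed subgroup of $\mathbb{R}$ is $\{0\}$, $\mathbb{R}$, or $p_0\mathbb{Z}$. Non-constancy of $H$ excludes $\mathbb{R}$, and the positive period $L(c)\in\Pi$ excludes $\{0\}$, so $\Pi=p_0\mathbb{Z}$ is discrete. By the previous step the map $c\mapsto L(c)$, which is differentiable and hence continuous, sends the connected interval $(0,\infty)$ into the discrete set $\Pi$, and is therefore constant. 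Thus $L$ is constant whenever $H$ is not, which is exactly the assertion. The main obstacle is the differentiate-and-subtract identity above; the remaining steps are formal.
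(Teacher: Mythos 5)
Your proposal is correct, and its decisive step is genuinely different from the paper's. The paper's proof is two lines: since periodicity of $\Gamma_c$ makes $y_c$, $x_c'$, $y_c'$, $x_c''$, $y_c''$ all $L(c)$-periodic, equation \pref{Eq.(2)} immediately gives $H(s)=H(s+L(c))$ for all $s$ and all $c>0$; the authors then differentiate this identity \emph{with respect to $c$} to get $0=H'(s+L(c))\,L'(c)$, and the dichotomy follows (if $L'(c_0)\neq 0$ for some $c_0$, then $H'\equiv 0$). Your first step arrives at the same periodicity $H(\cdot+L(c))\equiv H$, but by a more laborious route through the $(F_c,G_c)$ system with the subtract-and-cancel argument and the discussion of the zero set of $F_c^2+G_c^2$; this works, but it is unnecessary overhead, since \pref{Eq.(2)} exhibits $H$ pointwise as an $L(c)$-periodic expression wherever $y_c\neq 0$, and continuity finishes it. Your second step is where the approaches truly diverge: instead of differentiating in $c$, you invoke the rigidity of the period group $\Pi$ of a non-constant continuous function (a closed proper subgroup of $\mathbb{R}$, hence $p_0\mathbb{Z}$) and the connectedness of $(0,\infty)$ to force the continuous map $c\mapsto L(c)\in\Pi$ to be constant. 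This buys something real: your argument needs only \emph{continuity} of $H$ and of $L$, so it proves a slightly stronger statement than the lemma as stated, whereas the paper's one-line computation genuinely consumes both differentiability hypotheses. Conversely, the paper's route is shorter and stays entirely within elementary calculus.
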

\begin{proof} Since $x_{c}$ and $y_{c}$ are periodic for every $c>0$, we have $H(s) =H(s+L(c)), \ \mbox{for} \  s \in \mathbb{R} \ \mbox{and} \ c>0 $. Differentiating both side above with respect to $c$, we have $0 = H'(s+L(c))L'(c)$, which proves Lemma \ref{lem.3.1}.
\qed
\end{proof}
\par
We have
\begin{thm}  Let $n>3$ and $H(s)$ be a non-constant differentiable function on $\mathbb{R}$. Then,
 there does not exist a family $\{\Gamma_{c}\}$ such that 
 \begin{itemize}
 \item[{\rm (i)}] $\Gamma_{c}$ is periodic with period $L(c)$,
 \item[{\rm (ii)}] $L(c)$ is differentiable for $c>0$,
 \item[{\rm (iii)}]	$\Gamma_{\infty}$ is a simple closed curve.
\end{itemize}
\label{thm.3.4}
\end{thm}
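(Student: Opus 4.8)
The plan is to argue by contradiction: assume a family $\{\Gamma_c\}$ satisfying (i), (ii) and (iii) exists, and extract a contradiction from the second-order asymptotics of Theorem \ref{thm.3.3}. First I would use hypotheses (i) and (ii) together with the differentiability of the non-constant $H$: Lemma \ref{lem.3.1} then forces $L(c)$ to be constant, say $L(c)\equiv L$. Next I would observe that for large $c$ Corollary \ref{cor.3.1} gives $y_c=c+{\cal O}(1)>0$ on the compact interval $[0,L]$, so $y_c$ is genuinely $L$-periodic, and hence $F_c=y_cy_c'/c$ and $G_c=y_cx_c'/c$ are $L$-periodic for every large $c$ (for $F_c$ this is already clear from $F_c=(y_c^2)'/(2c)$ and the periodicity of $|y_c|$).

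The core of the argument is to feed this $c$-independent periodicity into the expansion of Theorem \ref{thm.3.3}. Since $F_c(s+L)-F_c(s)=0$ and $G_c(s+L)-G_c(s)=0$ for all small $\epsilon=1/c$, and the expansion $F_c=\sum_{k=0}^2\epsilon^kF_\infty^{(k)}+{\cal O}(\epsilon^3)$ (and the analogue for $G_c$) is uniform on $[0,2L]$, I would divide successively by $\epsilon^0,\epsilon^1,\epsilon^2$ and pass to the limit to conclude that each $F_\infty^{(k)}$ and $G_\infty^{(k)}$, $k=0,1,2$, is itself $L$-periodic. The order-zero identity $F_\infty(s+L)=F_\infty(s)$, $G_\infty(s+L)=G_\infty(s)$ gives $\eta(s+L)=\eta(s)+2\pi k_0$ for a fixed integer $k_0$, so $U$ and $H$ are $L$-periodic. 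Inserting $U(s+L)=U(s)$ into the order-one formula (ii) and cancelling the invertible $U(s)$ and the nonzero factor $n-2$, the periodicity of $F_\infty^{(1)},G_\infty^{(1)}$ reduces to $\int_s^{s+L}\cos\eta\,dt=\int_s^{s+L}\sin\eta\,dt=0$ for all $s$; this says precisely that $\Gamma_\infty(s+L)=\Gamma_\infty(s)$, i.e. $\Gamma_\infty$ is closed with period $L$, and that $C(t):=\int_0^t\cos\eta$ and $S(t):=\int_0^t\sin\eta$ are $L$-periodic with $C(0)=C(L)=S(0)=S(L)=0$.

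Finally I would extract the contradiction from the order-two term. Using $U(s+L)=U(s)$ and $(n-2)(n-3)\neq0$ in formula (iii), periodicity of $F_\infty^{(2)},G_\infty^{(2)}$ becomes $\int_0^L\sin\eta(t)\,C(t)\,dt=0$ and $\int_0^L\cos\eta(t)\,C(t)\,dt=0$; the second integral is $\int_0^L C'C=\frac12[C^2]_0^L=0$ automatically, so the genuine constraint is $\int_0^L C\sin\eta\,dt=0$. On the other hand, writing $\Gamma_\infty=(C,-S)$ and computing the enclosed signed area by Green's theorem, an integration by parts using the vanishing boundary values of $C$ and $S$ gives $\mathrm{Area}=\frac12\int_0^L(-C\sin\eta+S\cos\eta)\,dt=-\int_0^L C\sin\eta\,dt$. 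Hence the order-two periodicity forces $\mathrm{Area}=0$. But hypothesis (iii), that $\Gamma_\infty$ is a simple closed curve, implies by the Jordan curve theorem that the enclosed area is nonzero (if $L$ is $N$ times the minimal period, the signed area is $N$ times a nonzero number), the desired contradiction.

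I expect the main obstacle to be the rigorous coefficient extraction in the second paragraph, namely justifying that the $c$-independent period $L$ lets one read off the $L$-periodicity of each $F_\infty^{(k)},G_\infty^{(k)}$ from the uniform expansion of Theorem \ref{thm.3.3}, together with the clean identification of the order-two constraint $\int_0^L C\sin\eta\,dt=0$ with the vanishing of the signed area enclosed by $\Gamma_\infty$; by comparison, the order-zero and order-one steps are routine once the $L$-periodicity of $U$, $C$ and $S$ is in hand.
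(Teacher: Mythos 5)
Your proposal is correct and follows essentially the same route as the paper: Lemma \ref{lem.3.1} to fix the period, periodicity of the expansion coefficients in Theorem \ref{thm.3.3} to extract the constraints $\int_0^L\cos\eta=\int_0^L\sin\eta=0$ and $\int_0^L\sin\eta(t)\int_0^t\cos\eta\,du\,dt=0$, and identification of the last integral with the signed area enclosed by $\Gamma_\infty$, which must be nonzero for a simple closed curve. Your write-up merely fills in details the paper leaves implicit (the coefficient-extraction by successive division by $\epsilon$, the Green's theorem computation, and the observation that the companion constraint $\int_0^L C\cos\eta\,dt=0$ is automatic), all of which are sound.
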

\begin{proof} Suppose that there exists a family $\{\Gamma_{c}\}$  satisfying these three conditions in Theorem \ref{thm.3.4}.
By Lemma \ref{lem.3.1},
$L(c)$ is constant, say $L(c)=L>0$. Since $\Gamma_{c}$ is periodic with period $L$ for any $c>0$, $F_{c}(s)$ and $G_{c}(s)$ are  periodic with period $L$ for any $c>0$.  Then, 
$F_{\infty}^{(k)}(s)$ and $G_{\infty}^{(k)}(s)$ are also  periodic with period $L$.
By Theorem \ref{thm.3.3}, 
we have
\begin{eqnarray*}
 &&  \int_{0}^{L} \cos \eta(u) du =0, \ \int_{0}^{L} \sin \eta(u) du =0, \\
 && (n-2)(n-3) \int_{0}^{L} \left(\sin \eta(u) \int_{0}^{u}\cos \eta(t)dt \right)du =0.
\end{eqnarray*}
The first formulas above with \pref{Eq.(6)} implies that $\Gamma_{\infty}$ is a closed smooth curve and the second 
one means that the signed area $A(\Gamma_{\infty})$ of the bounded domain surrounded by
 $\Gamma_{\infty}$ is zero if $n>3$. In fact,  $A(\Gamma_{\infty})$  is given by
 $A(\Gamma_{\infty}) = \left| \int_{0}^{L}x_{\infty}(u)y_{\infty}'(u)du \right|$  and it is 
 positive if 
$\Gamma_{\infty}$ is a simple closed curve, which  gives us contradiction,  proving Theorem \ref{thm.3.4}.
\qed
\end{proof}
\par
 
 \section{Global existence of generalized rotational hypersurfaces of $O(l+1)\times O(m+1)$-type} 
In this section,
we shall study generalized rotational hypersurfaces in $\mathbb{R}^{n}$ of $O(l+1)\times O(m+1)$-type with 
prescribed mean curvature.
The main task is the analysis of the behavior of the generating curve in the neighborhood of singular points.
This is done by getting an integral equation instead of the system of the  differential equations. 
\par
Let $(x(s),y(s))$,
$ s \in \mathbb{R} $,
be a plane curve satisfying $ x(s) > 0 $,
$ y(s) > 0$  and $s$ be the  arc length.
For natural numbers $l$ and $m$ with $(l+1)+(m+1) = n$,
we decompose $\mathbb{R}^{n}$ as $ (x_{1}, \cdots , x_{l+1}, y_{1}, \cdots , y_{m+1}) \in \mathbb{R}^{l+1} \times \mathbb{R}^{m+1}$.
A generalized rotational hypersurface $M$ of $O(l+1)\times O(m+1)$-type is defined by
\begin{equation}
	M =
	\left\{
	(x(s) S^{l} , y(s)S^{m}) \in \mathbb{R}^{n} \ |\  s \in \mathbb{R}
	\right\},
	\label{M}
\end{equation}
where,
for $k=l$,
$m$,
$S^{k}$ denotes the $k$-dimensional unit sphere with center origin in $\mathbb{R}^{k+1}$.
Note that this is $ O( m+1 ) \times O( l+1 ) $-type in the term of \cite{hsjdg}.
The mean curvature $H$ of $M$ is the function of one variable $s$, say $H=H(s)$, and we have 
\begin{equation}
	\left\{
	\begin{array}{l}
	\displaystyle{
	l \frac { y^\prime (s) } { x(s) }
	- m \frac { x^\prime (s) } { y(s) }
	- ( x^{\prime\prime}(s) y^\prime(s) - x^\prime(s)y^{\prime\prime} (s) )
	+ ( n - 1 ) H(s)
	= 0
	} ,
	\\
	x^\prime (s)^{2} + y^\prime (s)^{2} =1 ,
	\ x(s) > 0 , \ y(s) > 0, \
	s \in \mathbb{R}.
	\end{array}
	\right. 
	\label{lm-system}
\end{equation}

Conversely, given a continuous function $H=H(s)$,
$ s \in \mathbb{R}$,  we have the system of ordinary differential equations \pref{lm-system}.
Let $c>0$ and $d>0$ be any positive numbers  and any real numbers $c^\prime $,
$ d^\prime$ with ${c^\prime}^2 + {d^\prime}^{2} = 1$,
there exists a local solution curve $(x(s),y(s))$,
$ s \in I$, where $I$ denotes a subinterval of $\mathbb{R}$,
 of \pref{lm-system} with the initial conditions $x(s_{0})=c$,
$ y(s_{0})=d$,
$ x^\prime(s_{0})=c^\prime$,
$ y^\prime(s_{0})=d^\prime$.

\par
We shall extend this curve to the whole line $\mathbb{R}$.
The problem to be studied is the behavior of the solution curve when it passes through $x$-axis,
$y$-axis,
or the origin $(0,0)$.
When the curve passes through $y$-axis,
by changing $x$ and $y$,
we can analyze the behavior of the solution curve in the same way as the curve passes through $x$-axis.
Therefore,
it is sufficient to study the two cases that (a):
the curve $(x(s),y(s))$ passes through $x$-axis at a finite $s$,
and (b):
the curve $(x(s),y(s))$ passes through the origin $(0,0)$. 
We shall study cases (a) and (b) in \S\S~4.1 and 4.2 respectively.
\subsection{The case $(a)$}
Let us multiply $ y^{m}y^\prime $ to the first equation of \pref{lm-system}.
By  the second equation of \pref{lm-system} and the equation $ x^\prime x^{\prime\prime} + y^\prime y^{\prime\prime} = 0 $ 
which is obtained by differentiation of the second equation of \pref{lm-system},
we have
\begin{equation}
	( y^m x^\prime )^\prime
	=
	( n - 1 ) H(s) y^m y^\prime + l \frac { y^m } {x} { y^\prime}^{2} .
	\label{eqy}
\end{equation}
For simplicity we assume $ s_0 = 0 $.
The integration of \pref{eqy} leads to
\begin{equation}
	\begin{array}{rl}
	y^m (s) x^\prime (s)
	= & \!\!\!
	\displaystyle{
	( n - 1 ) \int_0^s H(t) y^m (t) y^\prime(t) \, dt
	}
	\\
	& \qquad
	\displaystyle{
	+ \,
	l \int_0^s \frac { y^m (t) } { x(t) } y^\prime (t)^{2} dt
	+ y^m (0) x'(0)
	}
	.
	\end{array}
	\label{eqy1}
\end{equation}
We show that
\begin{prop}
Suppose that $ \lim_{s \rightarrow b} y(s) = 0 $,
and $ \lim_{s \rightarrow b} x(s) > 0 $ for some $ b \in I$.
Then,
there exists the limit of $ x^\prime(s) $ as $ s \to b $,
and $\lim_{s \rightarrow b} x^\prime (s) = 0 $.
\label{prop.4.1}
\end{prop}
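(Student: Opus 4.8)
The plan is to exploit the first integral \pref{eqy1}, which I rewrite as $y(s)^m x'(s) = R(s)$, where
\[
	R(s) = (n-1)\int_0^s H(t)\,y(t)^m y'(t)\,dt + l\int_0^s \frac{y(t)^m}{x(t)}\,y'(t)^2\,dt + y(0)^m x'(0).
\]
Without loss of generality I treat the limit $s \to b^-$ with $b$ finite; the other side is symmetric. On $[0,b]$ the arc-length relation in \pref{lm-system} gives $|x'|,|y'|\le 1$, the function $H$ is continuous hence bounded by some $H_0$, and since $x$ is continuous and positive with $\lim_{s\to b}x=x_b>0$ it is bounded below by some $x_0>0$. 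Thus all integrands in $R$ are bounded on the finite interval, so $R(s)$ converges to a finite limit $R_b$ as $s\to b$. Because $y(s)^m\to 0$ while $|x'|\le 1$, the left-hand side $y^m x'\to 0$, forcing $R_b=0$. This by itself does not yield $x'\to 0$, since $x'=R/y^m$ is a $0/0$ indeterminate form; the missing ingredient is the strict monotonicity of $y$ near $b$.

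The key step, and the main obstacle, is to show that $y'\ne 0$ on a one-sided neighborhood of $b$ (exactly the gap flagged in the Remark following Proposition \ref{prop.2.1}). I would examine $y$ at its critical points: at any $s^\ast$ near $b$ with $y'(s^\ast)=0$ the arc-length relation forces $x'(s^\ast)^2=1$, and substituting $y'(s^\ast)=0$ into the first equation of \pref{lm-system} and multiplying by $x'(s^\ast)$ gives
\[
	y''(s^\ast) = \frac{m}{y(s^\ast)} - (n-1)H(s^\ast)x'(s^\ast).
\]
Since $|(n-1)H x'|\le (n-1)H_0$ whereas $m/y(s^\ast)\to +\infty$, we obtain $y''(s^\ast)>0$ as soon as $y(s^\ast)<m/((n-1)H_0)$, i.e.\ on some interval $(b-\delta,b)$. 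Hence every critical point of $y$ in $(b-\delta,b)$ is a strict local minimum, and therefore isolated; but two consecutive strict local minima must enclose a local maximum, which is a critical point with $y''\le 0$, a contradiction. Consequently $y'$ has at most one zero in $(b-\delta,b)$, so $y'$ has constant sign on some interval $(s^\ast,b)$. Since $y>0$ there and $y\to 0$ at $b$, that sign must be negative, so $y$ is strictly decreasing to $0$.

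Finally I would conclude by a direct estimate. As $R$ is absolutely continuous with $R(b)=R_b=0$ and $R'(t)=(n-1)H y^m y' + l\,\frac{y^m}{x}y'^2$ by \pref{eqy}, I write $R(s)=-\int_s^b R'(t)\,dt$ and divide by $y(s)^m$ to get
\[
	|x'(s)| \le \frac{1}{y(s)^m}\left\{ (n-1)H_0\int_s^b y^m|y'|\,dt + \frac{l}{x_0}\int_s^b y^m\,dt \right\}.
\]
Using $y'<0$ and $y(b)=0$, the first integral equals $y(s)^{m+1}/(m+1)$, while monotonicity gives $\int_s^b y^m\,dt\le y(s)^m(b-s)$; hence
\[
	|x'(s)|\le \frac{(n-1)H_0}{m+1}\,y(s) + \frac{l}{x_0}(b-s),
\]
and the right-hand side tends to $0$ as $s\to b$. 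This shows that $\lim_{s\to b}x'(s)$ exists and equals $0$, proving Proposition \ref{prop.4.1}. I expect the monotonicity claim, via the observation that every critical point of $y$ near $b$ is a strict local minimum, to be the only delicate point; the remaining estimates are routine.
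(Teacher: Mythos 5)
Your proof is correct and is essentially the paper's own argument: you use the same first integral \pref{eqy1}, and your key step --- that at any critical point of $y$ near $b$ the equation forces $y''(s^\ast) = m/y(s^\ast) - (n-1)H(s^\ast)x'(s^\ast) > 0$, so all critical points near $b$ are strict local minima and hence $y'$ cannot vanish on a one-sided neighborhood of $b$ --- is precisely the paper's Lemma \ref{lem.4.1}, which you in fact justify in more detail (the enclosed-maximum contradiction is only implicit in the paper). The sole difference is the concluding step: the paper applies l'H\^{o}pital's rule to $x'(s) = R(s)/y(s)^m$, whereas you integrate $R'$ and estimate directly; the two are interchangeable, with yours giving the small bonus of an explicit rate $|x'(s)| \leqq C\left( y(s) + (b-s) \right)$.
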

\par
To prove Proposition \ref{prop.4.1},
we need the following Lemma.
\begin{lem}
$ y^\prime (s) $ does not vanish on a neighborhood of $ s=b $.
\label{lem.4.1}
\end{lem}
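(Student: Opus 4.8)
The plan is to prove Lemma \ref{lem.4.1} by contradiction, using the first equation of \pref{lm-system} to control the behavior of $y^{\prime\prime}$ near any hypothetical zero of $y^\prime$, and then to derive a contradiction with the hypothesis $\lim_{s \to b} y(s) = 0$. The key observation is that near $s = b$ we have $y(s) \to 0$ while $x(s) \to x_b > 0$, so the term $l\,y^\prime/x$ in the mean curvature equation stays bounded, whereas the term $-m\,x^\prime/y$ becomes singular unless $x^\prime$ also tends to zero. This interplay is what forces $y^\prime$ to have a definite sign near $b$.

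First I would suppose, toward a contradiction, that $y^\prime$ vanishes on a sequence $s_k \to b$. At each such point, \pref{Eq.(3)} gives $x^\prime(s_k)^2 = 1$, so $|x^\prime(s_k)| = 1$. Next I would extract from the first equation of \pref{lm-system} an expression for the curvature term $x^{\prime\prime} y^\prime - x^\prime y^{\prime\prime}$, namely
\[
	x^{\prime\prime}(s) y^\prime(s) - x^\prime(s) y^{\prime\prime}(s)
	=
	l \frac{y^\prime(s)}{x(s)}
	- m \frac{x^\prime(s)}{y(s)}
	+ ( n - 1 ) H(s).
\]
Evaluating at $s = s_k$, where $y^\prime(s_k) = 0$ and $|x^\prime(s_k)| = 1$, the left side reduces to $-x^\prime(s_k) y^{\prime\prime}(s_k)$, and I would solve for $y^{\prime\prime}(s_k)$, obtaining
\[
	y^{\prime\prime}(s_k)
	=
	- x^\prime(s_k)
	\left(
	- m \frac{x^\prime(s_k)}{y(s_k)}
	+ ( n - 1 ) H(s_k)
	\right)
	=
	\frac{m\,x^\prime(s_k)^2}{y(s_k)}
	- ( n - 1 ) x^\prime(s_k) H(s_k).
\]
Since $x^\prime(s_k)^2 = 1$ and $y(s_k) \to 0^+$ while $H$ is bounded near $b$, the dominant term $m/y(s_k)$ is large and positive, forcing $y^{\prime\prime}(s_k) \to +\infty$. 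Thus every zero of $y^\prime$ accumulating at $b$ is a strict local minimum of $y$, and in particular $y^\prime$ cannot vanish on an interval and cannot have two consecutive zeros without an intervening maximum — but a local maximum $s_k^\ast$ of $y$ would equally satisfy $y^\prime(s_k^\ast) = 0$ and yield $y^{\prime\prime}(s_k^\ast) \to +\infty$, which is incompatible with a maximum. This sign obstruction is what rules out the oscillation.

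\textbf{The main obstacle} I expect is handling the case where $y^\prime$ vanishes without being isolated — i.e., ruling out a sequence of zeros accumulating at $b$ rather than a single zero. The argument above shows that \emph{any} zero of $y^\prime$ sufficiently close to $b$ must be a strict local minimum of $y$ (since $y^{\prime\prime} > 0$ there once $y$ is small enough), and between two strict local minima there must lie a local maximum, which is also a zero of $y^\prime$ forced to have $y^{\prime\prime} > 0$ — a contradiction, since a local maximum requires $y^{\prime\prime} \leqq 0$. Therefore $y^\prime$ has at most one zero in a sufficiently small punctured neighborhood of $b$, and by shrinking the neighborhood I conclude $y^\prime$ does not vanish near $b$. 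The care needed is purely in the bookkeeping: choosing the neighborhood small enough that $y(s) < m/(2(n-1)\sup|H|)$, say, so that the $m/y$ term strictly dominates and the sign of $y^{\prime\prime}$ at zeros of $y^\prime$ is unambiguously positive, after which the local-extremum contradiction closes the argument.
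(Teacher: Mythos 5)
Your proposal is correct and follows essentially the same route as the paper: assume a sequence of zeros of $y^\prime$ accumulating at $b$, use \pref{Eq.(3)} to get $x^\prime(s_k)^2=1$ there, solve the first equation of \pref{lm-system} for $y^{\prime\prime}(s_k)$ to conclude $y^{\prime\prime}(s_k)\to+\infty$, and contradict $y(s)\to 0$ via the local-minimum/local-maximum argument. Your write-up in fact spells out the final extremum bookkeeping more explicitly than the paper does, but the underlying idea is identical.
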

\begin{proof}
By contraries,
suppose that there exists a sequence $ \{ s_j \} $ such that
$ s_j \to b $ as $ j \to \infty $,
and $ y^\prime ( s_j ) = 0 $.  
The  second equation of \pref{lm-system} and its differentiation imply
$ x^\prime ( s_j ) = \pm 1$,
$ x^{\prime\prime} ( s_j ) = 0$.
By inserting these to the  first equation of \pref{lm-system},
we have 
\[
	( n - 1 ) H( s_j )
	\mp \frac m { y( s_j ) } \pm y^{ \prime \prime } ( s_j ) = 0.
\]
This with $ y( s_j ) \to + 0 $ as $ j \to \infty $ yields
\[
	y^{ \prime \prime } ( s_j )
	= \mp ( n - 1 ) H( s_j ) + \frac m { y( s_j ) }
	\to \infty \mbox{ as } j \to \infty .
\]
Hence,
for large $ j $,
$ y( s_j ) $ is the minimum.
Consequently,
it does not hold that $ y( s ) \to +0 $ as $ s \to b $,
giving contradiction. We proved Lemma \ref{lem.4.1}.
\qed
\end{proof}
\par
We proceed now the proof of Proposition \ref{prop.4.1}.
\begin{proof}[Proof of Proposition \ref{prop.4.1}]
Since $ y(s) \to 0 $ as $ s \to b $,
\pref{eqy1} implies
$$
	( n - 1 ) \int_0^b H(t) y^m(t) y^\prime (t) \, dt
	+ l \int_0^b \frac { y^m (t) } { x(t) } y^\prime(t)^2 dt
	+ y^m (0) x^\prime (0)
	= 0 .
$$
From \pref{eqy1},
Lemma \ref{lem.4.1},
and l'H\^{o}pital's rule,
it follows that
\begin{eqnarray*}
	\lim_{s \rightarrow b} x^\prime (s)
	& \!\!\!\! = & \!\!\!
	\lim_{s \rightarrow b}
	\frac{ \displaystyle{
	( n - 1 ) \int_0^s
	H(t) y^m (t) y^\prime (t) \, dt
	+ l \int_0^s \frac { y^m (t) } { x(t) } y^\prime(t)^2 dt
	+ y^m (0) x^\prime (0)
	} }
	{ y^m (s) }
	\\
 	& \!\!\! = & \!\!\!
 	\lim_{s \rightarrow b}
 	\frac{ \displaystyle{
 	( n - 1 ) H(s) y^m (s) y^\prime (s)
 	+ l \frac { y^m (s) }{ x(s) } y^\prime (s)^2
 	} }
 	{ m y^{m-1} (s) y^\prime (s)}
 	\\
	& \!\!\! = & \!\!\!
	\frac 1m \lim_{ s \to b }
	\left\{ ( n - 1 ) H(s) y(s) + l \frac { y(s) } { x(s) } y^\prime (s)\right\}
	= 0,
\end{eqnarray*}
proving Proposition \ref{prop.4.1}.
\qed
\end{proof}
\par
In order to prove the existence of solutions of the system \pref{lm-system} under the 
assumption of Proposition \ref{prop.4.1},
we transform \pref{lm-system} by a change of variable.
Let $s=s(y)$ be the inverse function of $y=y(s)$ on a neighborhood of $s=0$.
Let us pose $q(s) = \frac{x'(s)}{y'(s)}$.
We then obtain 
\begin{eqnarray*}
	x y \frac {dq} {dy}
	& \!\!\! = & \!\!\!
	( n - 1 ) \tilde H (y) ( 1 + q^2)^{3/2} xy
	- mx q ( 1 + q^2 ) 
	+ ly ( 1 + q^2 ),
	\\
	x
	& \!\!\! = & \!\!\!
	\int_0^y q ( \xi ) \,d \xi + x(0),
\end{eqnarray*}
where $ \tilde H(y) $ is defined by the same way as in \pref{DiffEq(8)}.
\par
We,
then,
have the singular initial value problem
\begin{equation}
	\quad \,\,
	\left\{
	\begin{array}{l}
	\displaystyle{
	y \frac {dq} {dy}
	=
	- mq - mq^{3} + ( n - 1 ) \tilde H (y) y ( 1 + q^2 )^{\frac 32}
	+ l \frac { y ( 1 + q^2 ) }
	{ \displaystyle{ \int_0^y q(\xi) \, d \xi + x(0)} }
	},
	\\
	q(0) = 0.
	\end{array}
	\right.
	\label{DiffEq2}
\end{equation}
\begin{rem}
Although it is supposed to be $ l \geq 1 $ in this section,
if $ l=0 $ and $ m = n - 2 $,
then \pref{DiffEq2} is reduced to \pref{DiffEq(8)} of $O(n-1)$-type treated in the previous section.
When $l\geq 1$, the last term in the right hand of the first equation of \pref{DiffEq2} is new one.
\end{rem}
\par
We  study \pref{DiffEq2} as follows.
It follows from \pref{DiffEq2} that
$$
	\frac d {dy} (y^m q) 
	=
	- m y^{m-1} q^3
	+ ( n - 1 ) \tilde H (y) y^m ( 1 + q^2 )^{3/2}
	+ l \frac { y^m ( 1 + q^2 ) }
	{ \displaystyle{
	\int_0^y q(\xi) \, d\xi + x(0)
	} }
	.
$$
Let us integrate both sides above.
By virtue of $ \left. y^m q \right|_{y=0} = 0 $,
we have an integral equation 
\begin{equation}
	q(y) = \Psi(q)(y),
	\label{Psi}
\end{equation}
where we set
$$
	\Psi(q)(y)
	 = 
	y^{-m} \int_0^y
	\left\{
	- m q(\eta)^3
	+ ( n - 1) \tilde H (\eta) \eta (1 + q(\eta)^2 )^{3/2}
	\vphantom{ \frac { )^2 } { \displaystyle{ \int_0^\eta } } }
		+ \,
	l \frac { \eta ( 1 + q(\eta)^2 )}
	{ \displaystyle{
	\int_0^\eta q(\xi) \, d\xi + x(0)
	} }
	\right\}
	\eta^{m-1} d\eta .
$$
We note that when $ l=0 $,
$ m = n - 2$,
\pref{Psi} is reduced to \pref{IntEq}.
We use the same notations $X_{Y}$ and $X_{Y,M}$,
which are defined in the previous section,
in particular,
Proposition \ref{prop.2.2}.
\begin{prop}
Suppose that $ x(0) > 0 $ and $ y(0) = 0 $.
\begin{itemize}
\item[{\rm (i)}]
If $ \tilde H $ is bounded,
then there exist constants $M$ and $Y$ such that the integral equation {\rm \pref{Psi}} has
a unique solution $q$ on $X_{M}$.
\item[{\rm (ii)}]
If $ \tilde H $ is bounded and continuous,
then the solution $q$ obtained in {\rm (i)} is a unique solution of the 
initial value problem {\rm \pref{DiffEq2}}.
\end{itemize}
\label{prop.4.2}
\end{prop}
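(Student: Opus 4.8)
The plan is to follow the structure of the proof of Proposition \ref{prop.2.2}, obtaining $q$ as the unique fixed point of $\Psi$ in $X_{Y,M}$ via Banach's fixed point theorem, and then upgrading this fixed point to a solution of \pref{DiffEq2}. The only genuinely new feature compared with the map $\Phi$ of Section 2 is the last term of the integrand of $\Psi$, namely $l\eta(1+q(\eta)^2)/(\int_0^\eta q(\xi)\,d\xi + x(0))$, whose denominator depends on $q$ itself. The first preliminary step is therefore to control this denominator. For $q \in X_{Y,M}$ we have $|q(\xi)| \leq M\xi$, hence $\left|\int_0^\eta q(\xi)\,d\xi\right| \leq M\eta^2/2 \leq MY^2/2$; since $x(0)>0$, choosing $Y$ so small that $MY^2 \leq x(0)$ gives
\[
	\int_0^\eta q(\xi)\,d\xi + x(0) \geq \frac{x(0)}{2} > 0
\]
for all $\eta \in (0,Y]$. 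Thus throughout the argument the denominator stays bounded away from zero, and from now on $C$ denotes a constant depending on $n$, $l$, $\sup|\tilde H|$ and $1/x(0)$, but not on $M$ or $Y$.

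Next I would show that $\Psi$ maps $X_{Y,M}$ into itself for $M$ large and $Y$ small. The contributions of the first two terms of the integrand are estimated exactly as for $\Phi$, producing a bound of the form $C(M^3 y^2 + 1 + M^3 y^3)$ for the corresponding part of $|\Psi(q)(y)/y|$. For the new term, the lower bound on the denominator gives
\[
	\left| l\frac{\eta(1+q(\eta)^2)}{\int_0^\eta q(\xi)\,d\xi + x(0)}\, \eta^{m-1}\right| \leq C(\eta^m + M^2\eta^{m+2}),
\]
and after applying $y^{-m-1}\int_0^y(\cdots)\,d\eta$ this contributes at most $C(1 + M^2 y^2)$ to $|\Psi(q)(y)/y|$. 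Adding the two estimates and choosing $M$ large, then $Y$ small, so that the total is $\leq M$ establishes the self-mapping property. For the contraction estimate I would again use the mean value theorem on the first two terms as in Proposition \ref{prop.2.2}, and treat the new term by writing, with $D_i(\eta) = \int_0^\eta q_i(\xi)\,d\xi + x(0)$,
\[
	\frac{1+q_1^2}{D_1} - \frac{1+q_2^2}{D_2} = \frac{q_1^2 - q_2^2}{D_1} + (1+q_2^2)\frac{D_2 - D_1}{D_1 D_2}.
\]
Using $|q_1^2 - q_2^2| \leq 2M\eta|q_1 - q_2|$, the bound $|q_1(\xi) - q_2(\xi)| \leq \|q_1 - q_2\|_X\,\xi$ which yields $|D_1 - D_2| \leq \frac{1}{2}\|q_1 - q_2\|_X\eta^2$, and $D_1,D_2 \geq x(0)/2$, one checks that every contribution of this term to $|\Psi(q_1)(y) - \Psi(q_2)(y)|/y$ carries a positive power of $y$ multiplying $\|q_1 - q_2\|_X$. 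Hence, shrinking $Y$ further if necessary, $\Psi$ becomes a contraction, and Banach's fixed point theorem supplies the unique $q \in X_{Y,M}$ solving \pref{Psi}, proving (i).

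Finally, for (ii), I would note that $q \in X_{Y,M} \subset C[0,Y]$ with $q(0)=0$; when $\tilde H$ is continuous, the lower bound on the denominator makes the full integrand $\{\cdots\}\eta^{m-1}$ continuous on $[0,Y]$, so that $y^m q(y) = \int_0^y \{\cdots\}\eta^{m-1}\,d\eta$ is continuously differentiable. Then $q$ itself is differentiable, and differentiating recovers the first equation of \pref{DiffEq2}, while $q(0)=0$ is already built in; uniqueness follows from (i). I expect the main obstacle to be precisely the handling of the $q$-dependent denominator: one must simultaneously guarantee its positivity, which is controlled by $x(0)>0$ and small $Y$, and control the difference of the two reciprocals $1/D_1$ and $1/D_2$ in the contraction step, since this is the only feature absent from the $O(n-1)$-type problem of Section 2. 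Once the splitting above is in place, the remaining calculations are of the same type as in Proposition \ref{prop.2.2}.
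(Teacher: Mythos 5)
Your proposal is correct and follows essentially the same route as the paper: bound the new denominator $\int_0^\eta q\,d\xi + x(0)$ away from zero using $x(0)>0$ and $|q(\xi)|\leqq M\xi$ with $Y$ small, estimate the new term's contribution to the self-map bound, and in the contraction step split the difference of the $l$-terms into a numerator-difference part and a denominator-difference part (your identity is term-by-term the same as the paper's $\Psi_{21}/\Psi_{22}$ decomposition), concluding via Banach's fixed point theorem and then differentiating the integral equation for (ii). No gaps.
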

\begin{proof}
For any $ q \in X_{Y,M} $,
we know $ \Psi(q) \in C(0,Y] $.
To prove (i),
we show the boundedness of $\Psi(q)$ 
as follows:
By noting that
$$
	\left| \int_0^\eta q(\xi) \, d\xi \right|
	\leqq
	M \int_{0}^{\eta}\xi \, d \xi
	= \frac M2 \eta^2 ,
$$
we choose $M$ and $Y$ such that
$$
	\left|
	\int_0^\eta q(\xi) \, d\xi + x(0)
	\right|
	\geqq
	x(0) - \frac M2 \eta^2
	\geqq
	x(0) - \frac M2 Y^2 > 0 .
$$
Then,
\begin{eqnarray*}
	\left|
	\frac 1 y y^{-m}
	\int_0^y
	\frac { l \eta^m ( 1 + q(\eta)^2 ) }
	{ \displaystyle{
	\int_0^\eta q(\xi)\, d\xi + x(0)
	} }
	\, d \eta
	\right|
	& \!\!\! \leqq & \!\!\!
	\frac { \displaystyle{
	l \int_0^y \left( \eta^m + M^2 \eta^{m+2} \right) d\eta
	} }
	{ y^{m+1} \left( x(0) - \frac M2 Y^2 \right) }
	\\
	& \!\!\! = & \!\!\!
	\frac { \displaystyle{
	l \left(
	\frac{ y^{m+1} } { m+1 }
	+ \frac{ M^2 } { m + 3 } y^{m+3}
	\right)
	} }
	{ \displaystyle{
	y^{m+1} \left( x(0) - \frac M2 Y^2 \right)
	} }
	\\
	& \!\!\! \leqq & \!\!\!
	\frac{ \displaystyle{
	l \left( \frac 1 { m + 1 } + \frac { M^2 } { m + 3 } Y^2 \right)
	} }
	{ \displaystyle{ x(0) - \frac M2 Y^{2} } } .
\end{eqnarray*}
Since the estimates of other parts in the right hand of \pref{Psi} are done by the same way as the proof of Proposition \ref{prop.2.2} in \S~2,
there is a constant $C$ such that
$$
	\left|
	\frac { \Psi(q)(y) } {y}
	\right|
	\leqq
	C \left( M^3 Y^2 + 1 + M^3 Y^3 \right)
	+ \frac{ \displaystyle{
	l \left( \frac 1 { m + 1 } + \frac{ M^2 Y^2 } { m + 3 } \right)
	} }
	{ \displaystyle{ x(0) - \frac M2 Y^2 } } ,
$$
where $C$ may depend on $ l $,
$ m $,
$n$ and $\sup_{y}| \tilde H(y) |$,
but independent of $M$ and $Y$.
We can choose a large $M$ and a small $Y$ such that
$$
	C \left( M^3 Y^2 + 1 + M^3 Y^3 \right)
	+ \frac{ \displaystyle{
	l \left( \frac 1 { m + 1 } + \frac { M^2 Y^2 } { m + 3 } \right)
	} }
	{ \displaystyle{ x(0) - \frac M2 Y^2 } }
	\leqq M ,
$$
so that $\Psi(q) \in X_{Y,M}$.
\par
Next,
we show that $ \Psi : X_M \longrightarrow X_M $ is a contraction mapping.
Let us decompose $ \Psi(q) $ as $\Psi(q) =\Psi_1 (q) + \Psi_2 (q)$,
where
\begin{eqnarray*}
	\Psi_1 (q)(y)
	& \!\!\! = & \!\!\!
	y^{-m} \int_0^y
	\left\{
	- m \eta^{m-1} q(\eta)^3
	+ ( n - 1) \tilde H (\eta) \eta^m ( 1 + q(\eta) )^2)^{3/2}
	\right\} d \eta ,
	\\
	\Psi_2 (q)(y)
	& \!\!\! = & \!\!\!
	l y^{-m} \int_0^y
	\frac{ \eta^m ( 1 + q(\eta)^2 ) }
	{ \displaystyle{
	\int_0^\eta q(\xi) \, d\xi + x(0)
	} }
	\, d\eta .
\end{eqnarray*}
\par
For $\Psi_1 (q)$,
by using the previous result,
there exists a constant $ C_1 (\in (0,1)) $ such that
$$
	\left|
	\frac{ \Psi_1 ( q_1 )(y) - \Psi_1 ( q_2 )(y)}{y} \right|
	<
	C_1 \Vert q_1 - q_2 \Vert_{X}.
$$
\par
For $\Psi_2 (q)$, we compute
\begin{eqnarray*}
	&&
	\Psi_2 ( q_1 )(y) - \Psi_2 ( q_2 )(y)
	\\
	&&
	=
	l y^{-m}
	\int_0^y
	\left\{
	\frac{ \eta^m ( 1 + q_1 (\eta)^2 )}
	{ \displaystyle{
	\int_0^\eta q_1 (\xi) \, d\xi + x(0)
	} }
	-
	\frac{ \eta^m ( 1 + q_2 (\eta)^2 )}
	{ \displaystyle{
	\int_0^\eta q_2 (\xi) \, d\xi + x(0)
	} }
	\right\}
	d\eta
	\\
	&&
	=
	l y^{-m}
	\left\{
	\int_0^y
	\frac{
	\displaystyle{
	\eta^m ( 1 + q_1 (\eta)^2 )
	\left( \int_0^\eta q_2 (\xi) \, d\xi + x(0) \right)
	} }
	{ \displaystyle{
	\left( \int_0^\eta q_1 (\xi) \, d\xi + x(0) \right)
	\left( \int_0^\eta q_2 (\xi) \, d\xi + x(0) \right)
	} }
	d\eta
	\right.
	\\
	&&
	\qquad \qquad \qquad \qquad
	\left.
	- \,
	\int_0^y
	\frac{ \displaystyle{
	\eta^m ( 1 + q_2 (\eta)^2 )
	\left( \int_0^\eta q_1 (\xi) \, d\xi + x(0) \right)
	} }
	{ \displaystyle{
	\left( \int_0^\eta q_1 (\xi) \, d\xi + x(0) \right)
	\left( \int_0^\eta q_2 (\xi) \, d\xi + x(0) \right)
	} }
	d \eta
	\right\}
	\\
	&&
	=
	l y^{-m}
	\int_0^y
	\frac{ \displaystyle{
	\eta^m \int_0^\eta ( q_2 (\xi) - q_1 (\xi) ) \, d\xi
	} }
	{ \displaystyle{
	\left( \int_0^\eta q_1 (\xi) \, d\xi + x(0) \right)
	\left( \int_0^\eta q_2 (\xi) \, d\xi + x(0) \right)
	} }
	\, d\eta
	\\
	&&
	\qquad
	+ \,
	l y^{-m} \int_0^y
	\frac{ \displaystyle{
	\eta^m \left\{
	q_1 (\eta)^2 \left( \int_0^\eta q_2 (\xi) \, d\xi + x(0) \right)
	-
	q_2 (\eta)^2 \left( \int_0^\eta q_1 (\xi) \, d\xi + x(0) \right)
	\right\}
	} }
	{ \displaystyle{
	\left( \int_0^\eta q_1 (\xi) \, d\xi + x(0) \right)
	\left( \int_0^\eta q_2 (\xi) \, d\xi + x(0) \right)
	} }
	\, d\eta .
\end{eqnarray*}
Let  $\Psi_{21}$ and $\Psi_{22}$ be respectively the first and second terms in the above formula.
We estimate above terms separately as folows.
\begin{eqnarray*}
	\frac{ |\Psi_{21}| }{y}
	& \!\!\! \leqq & \!\!\!
	\frac{ \displaystyle{
	l \left| \int_0^y \eta^m \int_0^\eta ( q_2 (\xi)- q_1 (\xi) ) \, d\xi d\eta \right|
	} }
	{ \displaystyle{ y^{m+1} \left( x(0) - \frac M2 Y^2 \right)^2 } }
	\\
	& \!\!\! \leqq & \!\!\!
	y^{-m-1} C_2 \Vert q_2 - q_1 \Vert_X
	\int_0^y \eta^m \eta^2 \, d \eta
	\\
	& \!\!\! = & \!\!\!
	C_2 y^{-m-1}\Vert q_2 - q_1 \Vert_X
	\frac{ y^{m+3} } { m + 3 }
	\leqq
	\tilde C_2 \Vert q_2 - q_1 \Vert_X y^2 .
\end{eqnarray*}
Since we have
\begin{eqnarray*}
	&&
	\mbox{the numerator of the integrand of } \Psi_{22}
	\\
	&&
	=
	\eta^m \left\{
	q_1^2 \left( \int_0^\eta q_2 (\xi) \, d\xi + x(0) \right)
	-
	q_2^2 \left( \int_0^\eta q_1 (\xi) \, d\xi + x(0) \right)
	\right\}
	\\
	&&
	=
	\eta^m \left\{
	\left( q_1^2 - q_2^2 \right)
	\left( \int_0^\eta q_2 (\xi) \, d\xi + x(0) \right)
	+ q_2^2 \left( \int_0^\eta q_2 ( \xi ) \, d\xi - \int_0^\eta q_1 ( \xi ) \, d\xi \right)
	\right\} ,
\end{eqnarray*}
we see that
\begin{eqnarray*}
	|\Psi_{22}|
	& \!\!\! \leqq & \!\!\!
	\frac C { \displaystyle{
	\left( x(0) - \frac M2 Y^2 \right)^2
	} }
	\\
	& &
	\qquad
	\times
	\left[
	y^{-m} \int_0^y
	\eta^m
	\left\{
	\left| ( q_1 ( \eta ) + q_2 ( \eta ) ) ( q_1 ( \eta ) - q_2 ( \eta ) )
	\left( \int_0^\eta q_2 ( \xi ) \, d\xi + x(0) \right)
	\right|
	\right.
	\right.
	\\
	& & \qquad \qquad \qquad
	\left.
	\left.
	+ \,
	M^2 \eta^2 C \Vert q_2 - q_1 \Vert_X \frac { \eta^2 } 2
	\right\} d \eta
	\right]
	\\
	& \!\!\! \leqq & \!\!\!
	C y^{-m}
	\left\{
	\int_0^y \Vert q_2 - q_1 \Vert_X \eta^{m+2}
	\left( \Vert q_2 \Vert_X \frac{ \eta^2 } 2 + x(0) \right)
	+ \tilde C M^2 \Vert q_2 - q_1 \Vert_X \eta^{m+4}
	\right\} d\eta .
\end{eqnarray*}
Hence,
\begin{eqnarray*}
	\left|\frac{\Psi_{22}} y \right|
	& \!\!\! \leqq & \!\!\!
	y^{-m-1} \left(
	C_3 \frac{ y^{m+5} }{ m + 5 }
	+ C_4 \frac{ y^{m+3} }{ m + 3 }
	+ C_5 \frac{ y^{m+5} }{ m + 5 }
	\right) \Vert q_2 - q_1 \Vert_X
	\\
	& \!\!\! = & \!\!\!
	\left(\tilde C_3 y^4 + \tilde C_4 y^2 \right) \Vert q_2 - q_1 \Vert_X.
\end{eqnarray*}
Consequently,
we have 
\begin{eqnarray*}
	\left| \frac{ \Psi_2 ( q_1 )(y) - \Psi_2 ( q_2 )(y)} y \right|
	& \!\!\! \leqq & \!\!\!
	\hat C_2 \Vert q_2 - q_1 \Vert_X Y^2
	+ \Vert q_2 - q_1 \Vert_X \left( \tilde C_3 Y^4 + \tilde C_4 Y^2 \right)
	\\
	& \!\!\! = & \!\!\!
	\Vert q_2 - q_1 \Vert_X \left( \hat C_3 Y^4 + \hat C_4Y^2 \right).
\end{eqnarray*}
We choose $Y$ such that $ C_1 + \hat C_3 Y^4 + \hat C_4 Y^2 < 1 $.
\par
Summarizing these computations,
for the case of $ x(0) > 0 $ and $ y(0) = 0 $,
we proved that $ \Psi : X_{Y,M} \longrightarrow  X_{Y,M}$ is a contraction mapping,
hence Banach's fixed point theorem implies (i) of Proposition \ref{prop.4.2}.
Since the proof of (ii) is accomplished by 
the similar way as the proof of (ii) in Proposition \ref{prop.2.2},
we finished the proof of Proposition \ref{prop.4.2}.
\qed
\end{proof}
\subsection{The case $(b)$}
In this section,
we study  case (b).
That is,
the curve $(x(s),y(s))$ satisfying \pref{lm-system} passes through the origin of $\mathbb{R}^{2}$. 
\begin{prop}
Suppose that $ \lim_{ s \to b} y(s) = 0 $ and $ \lim_{ s \to b } x(s) = 0 $ for some $ b \in I $.
Then,
there exists the limit of $ x^\prime (s)^2 $ as $ s \to b $,
and
\[
	\lim_{ s \to b } x^\prime (s)^2 = \frac l { l + m }.
\]
Hence, we have 
\begin{eqnarray*}
	\mbox{if} \ s < b, \mbox{ then} \lim_{ s \to b-0 } x^\prime (s)
	& \!\!\! = & \!\!\! - \sqrt{ \frac l { l + m } },
	\\
	\mbox{if} \ s > b, \mbox{ then} \lim_{ s \to b+0 } x^\prime (s)
	& \!\!\! = & \!\!\! \sqrt { \frac l { l + m } }.
\end{eqnarray*}
\label{prop.4.3}
\end{prop}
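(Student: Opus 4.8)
The plan is to follow the pattern of Proposition \ref{prop.4.1}, but since now \emph{both} coordinates vanish at $b$, a single first-order identity like \pref{eqy} no longer suffices; I would pair it with a symmetric companion. First I would record, alongside \pref{eqy}, the analogous identity for $x^l y'$. Multiplying the first equation of \pref{lm-system} by $x^l x'$ and using $x'x''+y'y''=0$ (so that the curvature term $(x''y'-x'y'')\,x^l x'$ collapses to $-x^l y''$), I obtain
\[
	( x^l y' )'
	=
	m \frac { x^l (x')^2 } { y }
	- ( n - 1 ) H(s)\, x^l x' .
\]
Integrating this and \pref{eqy1} on $[s_0,s]$ and noting that $x,y\to 0$ force $y^m x'\to 0$ and $x^l y'\to 0$ (as $x',y'$ are bounded by the second equation of \pref{lm-system}), I conclude that both integrals converge as $s\to b$.

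Next I would establish the analog of Lemma \ref{lem.4.1}: neither $x'$ nor $y'$ vanishes near $b$. For $y'$ the argument of Lemma \ref{lem.4.1} applies verbatim; for $x'$ the same reasoning with $x\leftrightarrow y$ and $l\leftrightarrow m$ shows that a zero of $x'$ would force $x''\to+\infty$, hence a local minimum of $x$, contradicting $x\to +0$. Therefore $x'$ and $y'$ keep a constant sign near $b$. Since $x,y>0$ decrease to $0$ as $s\to b-0$, that sign must be negative, which already yields the signs claimed in the statement (and the mirror signs as $s\to b+0$, where $x,y$ increase from $0$).

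With $x'\neq 0$ and $y'\neq 0$ near $b$, I would apply l'H\^opital's rule to $x'=\frac{y^m x'}{y^m}$ and $y'=\frac{x^l y'}{x^l}$, both of the form $0/0$. Writing $\alpha=\lim_{s\to b} x'$ and $\beta=\lim_{s\to b} y'$, and discarding the vanishing $H$-terms, these give $\alpha=\frac{l}{m}\lim\frac{y y'}{x}$ and $\beta=\frac{m}{l}\lim\frac{x x'}{y}$. A second application of l'H\^opital to $\frac{y y'}{x}$, in which $y y''$ is rewritten using the curvature relation and $(n-1)H$ is eliminated through the first equation of \pref{lm-system}, produces a relation among $\lim\frac{y y'}{x}$, $\alpha$, and $\beta$; combined with the first relation this collapses to $m\alpha^2=l\beta^2$. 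Together with $\alpha^2+\beta^2=1$ from \pref{lm-system}, this forces $\alpha^2=\frac{l}{l+m}$, i.e. $\lim_{s\to b} x'(s)^2=\frac{l}{l+m}$, and the signs fixed above finish the proof.

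The delicate point, and the step I expect to be the main obstacle, is the legitimacy of these l'H\^opital computations: each requires that the limit of the differentiated ratio exist, i.e. that the tangent direction of the generating curve genuinely converges at the corner rather than oscillating. Because the chain of relations is self-referential (evaluating $\lim\frac{y y'}{x}$ feeds back on $\alpha$ and $\beta$), it does not by itself certify existence of the limits. The real work is an a priori argument that the curve has a well-defined tangent line at the origin\,---\,equivalently that the angle $\theta(s)$ with $x'=\cos\theta$, $y'=\sin\theta$ converges, equivalently that the singular $1/x$ and $1/y$ forcing terms in $\theta'$ cancel to leading order. I would extract this from the $x\leftrightarrow y$, $l\leftrightarrow m$ symmetric pair of identities above, using them first to control $\lim\frac{x}{y}$ and then feeding that convergence back into the l'H\^opital evaluation.
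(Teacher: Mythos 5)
Your conditional computation is correct and coincides with the paper's Lemma \ref{lem.4.2}: granted that the tangent direction converges, l'H\^opital applied to $y^m x'/y^m$ and to its mirror $x^l y'/x^l$ (your companion identity $(x^l y')' = m\,x^l (x')^2/y - (n-1)H x^l x'$ is right) gives $m\alpha^2 = l\beta^2$, hence $\alpha^2 = l/(l+m)$, and your sign discussion via the non-vanishing of $x'$, $y'$ near $b$ is also sound. The problem is the step you defer to the final paragraph: the convergence of the tangent direction, i.e.\ that $y'/x'$ (equivalently $y/x$) does not oscillate as $s \to b$, is not a technical point to be patched afterwards --- it is the entire content of the proposition, and your proposal contains no proof of it. Moreover, the route you sketch is circular: ``first controlling $\lim x/y$'' is, by exactly the l'H\^opital-type identifications you invoke (the paper's Lemma \ref{lem.4.5}), the same problem as controlling $\lim x'/y'$. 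The symmetric integral identities cannot supply this control: integrating \pref{eqy} and its mirror only shows that $y^m x'$ and $x^l y'$ tend to $0$, which is automatic from $x,y\to 0$ and $|x'|,|y'|\leqq 1$, and puts no constraint on oscillation of the ratio. Indeed, your self-referential relations are perfectly consistent with $\liminf$ and $\limsup$ of $y'/x'$ being distinct; pushed through carefully they only yield $\bar L\,\underline L = m/l$ for the limsup $\bar L$ and liminf $\underline L$, which is precisely what the paper proves in Lemmas \ref{lem.4.3}--\ref{lem.4.4}, and is strictly weaker than $\underline L = \bar L$.

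What fills the gap in the paper is a genuinely dynamical argument (Lemma \ref{lem.4.6}). Setting $A = y/x$, $B = y'/x'$, one first shows $\liminf A = \liminf B = \underline L$, $\limsup A = \limsup B = \bar L$ and $\underline L \bar L = m/l$, using sequences of critical points where $x'' = y'' = 0$; then, assuming $\underline L < \sqrt{m/l} < \bar L$, one follows the trajectory $(A(s),B(s))$ from times where the curve is tangent to a line $y = L_j x$ (so $A = B = L_j \approx \underline L$) until it exits a small ball around $(\underline L, \underline L)$, and shows from the ODE that $\frac{d}{ds}\bigl[ (A - \sqrt{m/l}\,)^2 + (B - \sqrt{m/l}\,)^2 \bigr] \geqq \delta/(2y) > 0$ there, because the term $-\frac{l}{x'y}\bigl(B - \sqrt{m/l}\,\bigr)\bigl(AB - \frac{m}{l}\bigr)$ dominates all bounded contributions as $y \to 0$. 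This monotonicity forces the exit point to lie strictly below $\underline L$ in one coordinate, contradicting the definition of the inferior limit. Some quantitative estimate of this kind, ruling out oscillation by exploiting the singular $1/y$ factor, is indispensable; without it your argument establishes only the conditional statement ``if $\lim x'(s)^2$ exists, then it equals $l/(l+m)$.''
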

The proof of Proposition \ref{prop.4.3} is divided into several Lemmas.
\begin{lem}
If there exists $  \lim_{ s \to b } x^\prime (s)^{2} $,
then the formulas in Proposition \ref{prop.4.3} hold.
\label{lem.4.2}
\end{lem}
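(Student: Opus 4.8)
The plan is to reduce the whole question to the behaviour of a single scalar equation and then exploit the fact that, by hypothesis, $x'(s)^2$ has a limit. Write $a=\lim_{s\to b}x'(s)^2$ and treat the approach $s\to b-0$, the case $s\to b+0$ being identical after the reflection $s\mapsto 2b-s$. From $x'^2+y'^2=1$ we get $\lim y'^2=1-a$. First I would upgrade these to limits of $x'$ and $y'$ themselves: if $a>0$ then $|x'|\to\sqrt a>0$, so $x'$ is nowhere zero near $b$ and keeps a constant sign by continuity; since $x>0$ and $x\to0$, that sign must be negative, whence $x'\to-\sqrt a$, and likewise $y'\to-\sqrt{1-a}$ when $a<1$. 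In particular, from $x(s)=\int_s^b(-x'(u))\,du$ an averaging argument gives $\lim_{s\to b}\frac{x(s)}{b-s}=\sqrt a$ and $\lim_{s\to b}\frac{y(s)}{b-s}=\sqrt{1-a}$, so that both $x$ and $y$ vanish linearly in $b-s$.

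Next I would feed these asymptotics into a curvature--free first integral. Multiplying \pref{eqy} by $x/y^m$ gives the identity $xx''+m\,\frac{xx'y'}{y}=(n-1)H\,xy'+l\,y'^2$, and the companion identity obtained by multiplying the first equation of \pref{lm-system} by $x^lx'$ (and using $x'x''+y'y''=0$) gives $yy''+l\,\frac{yy'x'}{x}=-(n-1)H\,yx'+m\,x'^2$. Assuming for the moment $0<a<1$, the ratios $x/y$ and $y/x$ converge to $\sqrt{a/(1-a)}$ and its reciprocal, so every term except $xx''$ in the first identity has a finite limit; letting $s\to b$ produces $\lim_{s\to b}x(s)x''(s)=l-a(l+m)$. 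Combined with $x\sim\sqrt a\,(b-s)$ this is the same as $\lim_{s\to b}(b-s)x''(s)=\bigl(l-a(l+m)\bigr)/\sqrt a$.

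The crucial step is then an integrability argument. Since $x'$ has the finite limit $-\sqrt a$, the integral $\int_s^{s_1}x''(u)\,du=x'(s_1)-x'(s)$ stays bounded as $s\to b$. But if the constant $\beta:=l-a(l+m)$ were nonzero, then near $b$ the function $x''(u)$ would be of one sign with magnitude comparable to $1/(b-u)$, forcing $\int_s^{s_1}x''\,du$ to diverge like a constant multiple of $\log(b-s)\to\pm\infty$, a contradiction. Hence $\beta=0$, i.e. $a=l/(l+m)$, which is exactly the value claimed in Proposition \ref{prop.4.3}; the one--sided sign statements then follow from $x'\to-\sqrt a$ for $s<b$ and $x'\to+\sqrt a$ for $s>b$.

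Finally I would dispose of the degenerate possibilities $a=0$ and $a=1$, since the displayed value $l/(l+m)$ is strictly between $0$ and $1$ when $l,m\ge 1$. If $a=0$ then $x=o(b-s)$ while $y\sim(b-s)$, and the first identity degenerates to $\lim xx''=l>0$; because now $(b-s)/x\to\infty$ this forces $(b-s)x''\to+\infty$ and hence $x'\to-\infty$, contradicting $x'\to0$. The case $a=1$ is excluded symmetrically from the companion identity and $\lim yy''=m>0$. I expect this last bookkeeping, together with the verification that the averaging and sign arguments are legitimate under the sole hypothesis that $\lim x'^2$ exists (rather than $\lim x'$), to be the only genuinely delicate point; the algebra yielding $\beta=l-a(l+m)$ is routine once the limits of $x/y$ and $y/x$ are secured.
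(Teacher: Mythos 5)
Your proof is correct, but it runs along a genuinely different line from the paper's. The paper works with the integral representation \pref{eqy1}: having shown (as in Lemma \ref{lem.4.1}) that $x'$ and $y'$ do not vanish near $b$, it applies the l'H\^{o}pital computation of Proposition \ref{prop.4.1} to get $\lim_{s\to b}x'=\frac1m\lim_{s\to b}\bigl\{(n-1)Hy+l\frac{y}{x}y'\bigr\}$, squares this, and uses l'H\^{o}pital once more, $\lim (y/x)^2=\lim(y'/x')^2=Y/X$ (valid when $X=\lim x'^2\neq0$), to obtain the algebraic relation $X=(l/m)^2\,Y^2/X$; together with $X+Y=1$ this yields $X=l/(l+m)$, and the degenerate case $X=0$ is killed in one line because the same computation would give $0=\infty$. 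You instead avoid l'H\^{o}pital altogether: you extract the pointwise identity $xx''+m\frac{xx'y'}{y}=(n-1)Hxy'+ly'^2$ from \pref{eqy}, use the elementary averaging $x\sim\sqrt a\,(b-s)$, $y\sim\sqrt{1-a}\,(b-s)$ to compute $\lim xx''=l-a(l+m)$, and then force this constant to vanish by the logarithmic-divergence argument (if $(b-u)x''$ had a nonzero limit, $\int x''\,du$ would diverge, contradicting $|x'|\leqq1$). What the paper's route buys is brevity, since Proposition \ref{prop.4.1} and Lemma \ref{lem.4.1} are already in hand; what your route buys is self-containedness (only the fundamental theorem of calculus and limits are used, and no inversion of l'H\^{o}pital's rule needs justification), plus quantitative by-products such as the asymptotics of $x$, $y$, and $x''$ near $b$. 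Your handling of the degenerate cases $a=0$, $a=1$ via the two companion identities is also sound, modulo one trivial slip: in the case $a=0$, since $xx''\to l>0$ gives $x''>0$ near $b$, the divergence you obtain is $x'\to+\infty$, not $-\infty$; the contradiction with $x'^2\to0$ (indeed with $|x'|\leqq1$) is of course unaffected.
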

\begin{proof}
Set $\lim_{ s \to b } x^\prime (s)^{2} = X $.
By the second equation of \pref{lm-system},
there exists $\lim_{ s \to b } y^\prime (s)^2 =Y $.
By the same way as the proof of Lemma 4.1,
it is shown that $ x^\prime (s) $ and $ y^\prime (s)$ are not zero on a neighborhood of $ s = b $.
In a manner similar to the proof of Proposition 4.1,
we have
\[
	\lim_{ s \to b } x^\prime (s)
	=
	\frac 1m \lim_{ s \to b }
	\left\{
	( n - 1 ) H(s) y(s) + l \frac { y(s) } { x(s) } y^\prime (s)
	\right\}.
\]
Squaring both sides,
we obtain
\[
	X =
	\left( \frac 1m \right)^2
	\lim_{ s \to b }
	\left\{ ( n - 1 ) H(s) y(s) + l \frac { y(s) } { x(s) } y^\prime (s)
	\right\}^2
	=
	\left( \frac lm \right)^2
	\lim_{ s \to b } \left( \frac { y(s) } { x(s) } y^\prime (s) \right)^2.
\]
Suppose $ X \ne 0 $.
Since $ x(b) = y(b) = 0 $,
l'H\^{o}pital's rule leads to
\[
	\lim_{ s \to b } \left( \frac { y(s) } { x(s) } \right)^2
	=
	\lim_{ s \to b } \left( \frac { y^\prime (s) } { x^\prime (s) } \right)^2
	=
	\frac YX .
\]
Hence,
\[
	X = \left( \frac lm \right)^2 \frac { Y^2 } X.
\]
This with the second equation of \pref{lm-system} yields
\[
	X = \frac l { l+m } ,
	\quad
	Y = \frac m { l+m }.
\]
Last,
we need to show $ X \neq 0 $.
By contraries,
suppose $ X = 0 $.
Since $ Y =  1 $,
the same computation as above implies
\[
	0 = X = 
	\left( \frac lm \right)^2
	\lim_{ s \to b } \left( \frac { y(s) } { x(s) } y^\prime (s) \right)^2
	=
	\left( \frac lm \right)^2
	\lim_{ s \to b } \left( \frac { y^\prime (s) } { x^\prime (s) } \right)^2 Y
	= \infty ,
\]
giving a contradiction.
\qed
\end{proof}
\par
For simplicity we consider only the case $ s < b $.
\begin{lem}
It holds that
\[
	\liminf_{ s \to b - 0 }
	\frac { y^\prime (s) } { x^\prime (s) }
	=
	\frac ml \liminf_{ s \to b - 0 }
	\frac { x^\prime (s) } { y^\prime (s)} .
\]
\label{lem.4.3}
\end{lem}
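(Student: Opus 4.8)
The plan is to reduce the claimed identity to a single relation between the lower and upper limits of $u:=y'/x'$, and then to read that relation off from the differential equation governing $u$. On a left neighbourhood of $b$ the derivatives $x'$, $y'$ are non-zero and of constant sign (both negative, since $x,y>0$ decrease to $0$), by the argument of Lemma~\ref{lem.4.1}; hence $u$ is a well-defined, positive, $C^1$ function there, and from $x'^2+y'^2=1$ one has the pointwise identities $x'=-(1+u^2)^{-1/2}$, $y'=-u(1+u^2)^{-1/2}$. Writing $A=\liminf_{s\to b-0}u$ and $B=\limsup_{s\to b-0}u$, the reciprocal rule $\liminf_{s\to b-0}(1/u)=1/\limsup_{s\to b-0}u$ turns the assertion $\liminf\frac{y'}{x'}=\frac ml\liminf\frac{x'}{y'}$ into the single equation $AB=m/l$, so it suffices to prove $AB\ge m/l$ and $AB\le m/l$.

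Next I would convert the first equation of \pref{lm-system} into an equation for $u$. From $u=y'/x'$ and $x'x''+y'y''=0$ one gets $x''y'-x'y''=-u'/(1+u^2)$; substituting this together with the expressions for $x',y'$ and then multiplying by $y$ gives, with $v:=y/x$,
\[
	\frac{y\,u'}{\sqrt{1+u^2}}=l\,u\,v-m-(n-1)H\,y\,\sqrt{1+u^2}.
\]
The last term tends to $0$ as $s\to b-0$ because $y\to0$ and $H$ is bounded. Consequently, at any critical point $\sigma$ of $u$ (where $u'(\sigma)=0$) the left-hand side vanishes and $l\,u(\sigma)v(\sigma)=m+(n-1)H(\sigma)y(\sigma)\sqrt{1+u(\sigma)^2}$, so $u(\sigma)v(\sigma)\to m/l$ whenever $\sigma\to b$.

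The two inequalities then follow by testing this balance at local extrema. If $u$ does not converge it oscillates and possesses local minima $\sigma_j\to b$ with $u(\sigma_j)\to A$ and local maxima $\tau_k\to b$ with $u(\tau_k)\to B$, at each of which $u'$ vanishes. Cauchy's mean value theorem applied to $y$ and $x$ on $[\sigma,b]$, using $x(b)=y(b)=0$, gives $v(\sigma)=y(\sigma)/x(\sigma)=u(\xi_\sigma)$ for some $\xi_\sigma\in(\sigma,b)$, so every value of $v$ near $b$ is a value of $u$ near $b$ and is therefore trapped in $[A,B]$ in the limit. Combining $u(\sigma_j)v(\sigma_j)\to m/l$ with $u(\sigma_j)\to A$ yields $v(\sigma_j)\to m/(lA)$, and the trapping $\limsup_j v(\sigma_j)\le B$ gives $AB\ge m/l$; the maxima give $v(\tau_k)\to m/(lB)$ and $A\le m/(lB)$, i.e.\ $AB\le m/l$. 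In the remaining case $A=B=:\ell$ (so also $v\to\ell$ by the same trapping), the displayed equation forces $\ell^2=m/l$: otherwise its right-hand side tends to a non-zero constant while $y\to0$, whence $u'$ is of size $\sim c/y$ and $u$ diverges, contradicting $u\to\ell$.

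The main obstacle is the control that makes the extremal argument legitimate, namely that $u$ stays bounded and bounded away from $0$ near $b$ (equivalently that $x'$ and $y'$ are bounded away from $0$); without it one cannot divide by $u(\sigma_j)$ or treat $B$ as finite. I would establish $0<A\le B<\infty$ first, and note that the genuinely degenerate alternatives $A=0$ and $B=\infty$ force one another (through the same two extremal computations) and make both sides of the asserted identity vanish, so the statement holds trivially in that case. Granting this boundedness, the derivation of the $u$-equation, the selection of critical points approaching $b$ with the correct limiting values, and the Cauchy mean value trapping are all routine.
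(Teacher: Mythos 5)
Your strategy coincides in substance with the paper's own proof: your critical points of $u=y'/x'$ (where $u'=0$) are exactly the paper's sequences with $x''(s_j)=y''(s_j)=0$, your evaluation of the equation at those points reproduces the paper's two relations between $y/x$ and $x'/y'$, and your Cauchy mean-value trapping $v(\sigma)=u(\xi_\sigma)$ plays the role of the paper's ``l'H\^opital's rule for the inferior limit.'' Your handling of the convergent case $A=B=\ell<\infty$ by the blow-up estimate $|u'|\sim c/y\geqq c/(b-s)$ is a self-contained alternative to the paper, which instead settles the case where $\lim x'(s)^2$ exists by citing Lemma~\ref{lem.4.2}.

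The gap is the case $A=B=+\infty$, i.e.\ $u=y'/x'\to+\infty$ (equivalently $x'\to 0$, $y'\to -1$), which your outline does not exclude even though the asserted identity is false there: the left-hand side is $+\infty$ while $\frac ml\liminf x'/y'=\frac ml\cdot\frac 1B=0$. This case escapes all three of your devices. It is not the ``trivially true'' degenerate case, since both sides vanish only when $A=0$ and $B=\infty$ hold simultaneously. The implication ``$B=\infty\Rightarrow A=0$ via the extremal computations'' has no content here, because $u$ converges (to $+\infty$), so no oscillation and hence no critical points are available. And your blow-up argument for the convergent case fails twice over: its conclusion ``$u$ diverges'' is no contradiction when $\ell=\infty$, and the term $(n-1)Hy\sqrt{1+u^2}=(n-1)Hy/|x'|$ is no longer negligible when $x'\to 0$, so you cannot assert that the right-hand side of your $u$-equation tends to a nonzero constant. (The paper excludes precisely this scenario inside Lemma~\ref{lem.4.2}, in the step showing $X\neq 0$.) The repair stays within your toolkit: divide your $u$-equation by $\sqrt{1+u^2}$ to get $y\,(\arctan u)'=(luv-m)/\sqrt{1+u^2}-(n-1)Hy$; if $u\to\infty$ then $v\to\infty$ by your trapping, hence the right-hand side tends to $+\infty$ (note $luv/\sqrt{1+u^2}\geqq lv/\sqrt 2$ for $u\geqq 1$), so $(\arctan u)'\geqq 1/y\geqq 1/(b-s)$ near $b$; integration makes $\arctan u$ unbounded, contradicting $\arctan u<\pi/2$. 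With this case added, your proof is complete.
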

\begin{proof}
If $ \lim_{ s \to b-0 }x^\prime (s)^2 $ exists,
then Lemma \ref{lem.4.3} follows from Lemma \ref{lem.4.2}.
In case $ \lim_{ s \to b-0 } x^\prime (s)^2 $ does not exist,
we will find a contradiction as follows.
When $ s $ is near to $ b $,
we have $ - 1 \leqq x^\prime (s) < 0 $.
Hence,
\[
	-1 \leqq
	\liminf_{ s \to b - 0 } x^\prime (s)
	<
	\limsup_{ s \to b - 0 } x^\prime (s)
	\leqq
	0.
\]
Then,
there is a sequence $ \{ s_j \} $  satisfying
\begin{eqnarray*}
	&&
	\lim_{ j \to \infty } s_j = b-0 ,
	\quad
	\lim_{ j \to \infty } x^\prime ( s_j )
	=
	\limsup_{ s \to b - 0 } x^\prime ( s ) > -1 , \\
	&&
	\lim_{ j \to \infty } y^\prime ( s_j )
	=
	\liminf_{ s \to b - 0 } y^\prime ( s ) < 0 , \,
	x^{ \prime \prime } ( s_j ) = y^{ \prime \prime } ( s_j ) = 0.
\end{eqnarray*}
By the first equation of \pref{lm-system},
we see that,
at $ s = s_j $, 
\[
	\frac { y ( s_j ) } { x ( s_j ) }
	=
	\frac ml \frac { x^\prime ( s_j ) } { y^\prime ( s_j ) }
	-
	\frac { ( n - 1 ) H( s_j ) y( s_j ) } { l y^\prime ( s_j ) }.
\]
When $ j \to \infty $,
since $ y^\prime ( s_j ) $ does not converge to $ 0 $,
the second term in the above right hand one tends to $0$.
Hence,
$ \lim_{ j \to \infty } \frac{ y ( s_j ) } { x ( s_j )} $ exists and we have
\[
	\lim_{ j \to \infty } \frac { y ( s_j ) } { x ( s_j ) }
	=
	\frac ml \lim_{ j \to \infty } \frac { x^\prime ( s_j ) } { y^\prime ( s_j ) } .
\]
By l'H\^{o}pital's rule for the inferior limit,
we have
\[
	\lim_{ j \to \infty } \frac { y ( s_j ) } { x ( s_j ) }
	\geqq
	\liminf_{ s \to b - 0 } \frac { y(s) } { x(s) }
	\geqq
	\liminf_{ s \to b - 0 } \frac { y^\prime (s) } { x^\prime (s) }.
\]
\par
Since $ \frac{ - \xi}{ \sqrt{ 1 - \xi^2 }} $ is monotone decreasing on the interval $ ( - 1 , 0 ] $,
it holds that
\[
	\lim_{ j \to \infty } \frac { x^\prime ( s_j ) } { y^\prime ( s_j ) }
	=
	\lim_{ j \to \infty }
	\left( - \frac { x^\prime ( s_j ) } { \sqrt{ 1 - x^\prime ( s_j )^2 } } \right)
	=
	\liminf_{ s \to b - 0 } \left( - \frac { x^\prime (s) } { \sqrt{ 1 - x^\prime (s)^2 } } \right)
	=
	\liminf_{ s \to b - 0 } \frac { x^\prime (s) } { y^\prime (s) }.
\]
Consequently,
we have
\[
	\liminf_{ s \to b - 0 } \frac { y^\prime (s) } { x^\prime (s) }
	\leqq
	\frac ml
	\liminf_{ s \to b - 0 } \frac { x^\prime (s) } { y^\prime (s) } .
\]
Next,
we show the opposite inequality.
There exists a sequence $ \{ \tilde s_j \} $ such that
\begin{eqnarray*}
	&&
	\lim_{ j \to \infty } \tilde s_j = b - 0 ,
	\quad
	\lim_{ j \to \infty } x^\prime ( \tilde s_j )
	=
	\liminf_{ s \to b - 0 } x^\prime (s) < 0 ,
	\\
	&&
	x^{ \prime \prime } ( \tilde s_j ) =
	y^{ \prime \prime } ( \tilde s_j ) = 0.
\end{eqnarray*}
By the second equation of \pref{lm-system},
\[
	\frac { y^\prime ( \tilde s_j ) } { x^\prime ( \tilde s_j ) }
	=
	\frac ml \frac { x( \tilde s_j ) } { y( \tilde s_j ) }
	-
	\frac { ( n - 1 ) H( \tilde s_j ) x ( \tilde s_j ) } { l x^\prime ( \tilde s_j ) }.
\]
As $ j \to \infty $,
we see
\[
	\lim_{ j \to \infty } \frac { y^\prime ( \tilde s_j ) } { x^\prime ( \tilde s_j ) }
	=
	\frac ml
	\lim_{ j \to \infty } \frac { x( \tilde s_j ) } { y( \tilde s_j ) }.
\]
Thus, we have
\begin{eqnarray*}
	&& \!\!\!\!\!\!\!\!\!\!\!\!
	\lim_{ j \to \infty }
	\frac { y^\prime ( \tilde s_j ) } { x^\prime ( \tilde s_j ) }
	=
	\lim_{ j \to \infty }
	\left( - \frac { \sqrt{ 1 - x^\prime ( \tilde s_j )^2 } } { x^\prime ( s_j ) } \right)
	=
	\liminf_{ s \to b - 0 }
	\left( - \frac { \sqrt{ 1 - x^\prime (s)^2 } } { x^\prime (s) }
	\right)
	=
	\liminf_{ s \to b - 0 }
	\frac { y^\prime (s) } { x^\prime (s) } ,
	\\
	&& \!\!\!\!\!\!\!\!\!\!\!\!
	\lim_{ j \to \infty } \frac { x( \tilde s_j ) } { y( \tilde s_j ) }
	\geqq
	\liminf_{ s \to b - 0 } \frac { x(s) } { y(s) }
	\geqq
	\liminf_{ s \to b - 0 } \frac { x^\prime (s) } { y^\prime (s) } ,
\end{eqnarray*}
so that,
we obtain
\[
	\liminf_{ s \to b - 0 }
	\frac { y^\prime (s) } { x^\prime (s) }
	\geqq
	\frac ml \liminf_{ s \to b - 0 }
	\frac { x^\prime (s) } { y^\prime (s)} ,
\]
proving Lemma \ref{lem.4.3}.
\qed
\end{proof}
\begin{lem}
It holds that
\[
	\limsup_{ s \to b - 0 }
	\frac { y^\prime (s) } { x^\prime (s) }
	=
	\frac ml \limsup_{ s \to b - 0 }
	\frac { x^\prime (s) } { y^\prime (s)}.
\]
\label{lem.4.4}
\end{lem}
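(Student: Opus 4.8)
My plan is to deduce Lemma~\ref{lem.4.4} directly from Lemma~\ref{lem.4.3}, exploiting the fact that $y'/x'$ and $x'/y'$ are reciprocals of one another and carry a fixed sign near $b$. First I would record the sign information on a left neighborhood of $b$: we already know $-1 \le x'(s) < 0$ there, and by the argument used for Lemma~\ref{lem.4.1} (applied to the present case, as invoked in the proof of Lemma~\ref{lem.4.2}) $y'$ does not vanish near $b$; since $y(s)>0$ decreases to $0$ as $s\to b-0$, this forces $y'(s)<0$. Consequently both $f:=y'/x'$ and $1/f=x'/y'$ are \emph{positive} on a punctured left neighborhood of $b$.

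For a positive function $f$ one has the elementary relations
\[
	\limsup_{s \to b-0} \frac 1{f(s)} = \frac 1{\displaystyle \liminf_{s\to b-0} f(s)} ,
	\qquad
	\liminf_{s \to b-0} \frac 1{f(s)} = \frac 1{\displaystyle \limsup_{s\to b-0} f(s)} ,
\]
interpreted in $[0,+\infty]$. Lemma~\ref{lem.4.3} reads $\liminf f = \frac ml \liminf (1/f)$; substituting the second relation rewrites it as $\liminf f \cdot \limsup f = m/l$. The assertion of Lemma~\ref{lem.4.4}, namely $\limsup f = \frac ml \limsup(1/f)$, becomes, after substituting the first relation, the identical equation $\limsup f \cdot \liminf f = m/l$. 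Thus Lemma~\ref{lem.4.4} is logically equivalent to Lemma~\ref{lem.4.3}, and the one-line computation $\frac ml \limsup(1/f) = \frac m{l\,\liminf f} = \limsup f$, whose last step uses $\liminf f \cdot \limsup f = m/l$, completes the proof.

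I expect the only genuinely delicate point to be the degenerate extreme in which $\liminf f = 0$ (equivalently $\limsup_{s\to b-0} x'(s)=0$); there the product relation must be read in the extended reals as $0\cdot(+\infty)$, and the identity of Lemma~\ref{lem.4.4} holds as $+\infty=+\infty$. If one instead prefers to avoid this bookkeeping and reprove Lemma~\ref{lem.4.4} by mimicking Lemma~\ref{lem.4.3} verbatim with $\limsup$ in place of $\liminf$, then the main obstacle shifts: one must choose sequences with $x''=y''=0$ realizing $x'\to\liminf x'$ (for one inequality) and $x'\to\limsup x'$ (for the other), and check that the term containing $H(s)$ in the two rearrangements of the first equation of \pref{lm-system} still tends to $0$ along these particular sequences — which is exactly where $y'$ or $x'$ could approach $0$. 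The reciprocal argument above sidesteps this difficulty entirely, which is why I would adopt it.
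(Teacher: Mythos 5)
Your proof is correct and is essentially the paper's own argument: the paper likewise deduces Lemma~\ref{lem.4.4} from Lemma~\ref{lem.4.3} by ``taking the inverse,'' i.e.\ the same reciprocal duality $\limsup (1/f) = 1/\liminf f$, $\liminf(1/f)=1/\limsup f$ for the positive function $f = y'/x'$, the only cosmetic difference being that the paper first passes through the $x \leftrightarrow y$ (and $l \leftrightarrow m$) swapped form of Lemma~\ref{lem.4.3} --- a step your equivalence observation shows to be a trivial rearrangement. Your explicit treatment of the signs of $x'$, $y'$ and of the degenerate $0\cdot(+\infty)$ case merely spells out what the paper leaves implicit.
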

\begin{proof}
By changing $ x $ and $ y $ in the previous computation in Lemma \ref{lem.4.3},
we see that
\[
	\liminf_{ s \to b - 0 }
	\frac { x^\prime (s) } { y^\prime (s) }
	=
	\frac lm \liminf_{ s \to b - 0 }
	\frac { y^\prime (s) } { x^\prime (s)}.
\]
By taking the inverse,
Lemma \ref{lem.4.4} is proved.
\qed
\end{proof}
\begin{lem} It holds that
\[
	\liminf_{ s \to b - 0 } \frac { x^\prime (s) } { y^\prime (s) }
	=
	\liminf_{ s \to b - 0 } \frac { x(s) } { y(s) }
	,
	\quad
	\limsup_{ s \to b - 0 } \frac { x^\prime (s) } { y^\prime (s) }
	=
	\limsup_{ s \to b - 0 } \frac { x(s) } { y(s) } .
\]
\label{lem.4.5}
\end{lem}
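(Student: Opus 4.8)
The statement is a generalized L'Hôpital rule in the $0/0$ form: as $s\to b-0$ both $x(s)\to0$ and $y(s)\to0$, and by Lemma \ref{lem.4.2} (whose proof gives $x^\prime,y^\prime\ne0$ near $b$) the function $y$ is strictly monotone there. The plan is to prove each of the two equalities by two opposite inclusions: the ``easy'' sandwich inequalities from the Cauchy mean value theorem, and the reverse inequalities by exploiting the differential equation \pref{lm-system}. For the sandwich direction I would fix $s<\sigma<b$ and apply Cauchy's mean value theorem to obtain $\xi\in(s,\sigma)$ with $\frac{x(\sigma)-x(s)}{y(\sigma)-y(s)}=\frac{x^\prime(\xi)}{y^\prime(\xi)}$. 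Letting $\sigma\to b-0$ and using $x(\sigma),y(\sigma)\to0$ shows that $\frac{x(s)}{y(s)}$ is a limit of values of $\frac{x^\prime}{y^\prime}$ on $(s,b)$, hence lies between $\inf_{(s,b)}\frac{x^\prime}{y^\prime}$ and $\sup_{(s,b)}\frac{x^\prime}{y^\prime}$; letting $s\to b-0$ gives
\[
\liminf_{s\to b-0}\frac{x^\prime}{y^\prime}\le\liminf_{s\to b-0}\frac{x}{y}\le\limsup_{s\to b-0}\frac{x}{y}\le\limsup_{s\to b-0}\frac{x^\prime}{y^\prime}.
\]

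For the reverse direction I would use two identities coming from \pref{lm-system}. Differentiating directly yields $\left(\frac{x}{y}\right)^\prime=\frac{y^\prime}{y}\left(\frac{x^\prime}{y^\prime}-\frac{x}{y}\right)$, so that at every interior critical point of $\frac{x}{y}$ one has $\frac{x^\prime}{y^\prime}=\frac{x}{y}$. Next, at a critical point of $\frac{x^\prime}{y^\prime}$ the curvature $x^{\prime\prime}y^\prime-x^\prime y^{\prime\prime}$ vanishes, and together with $x^\prime x^{\prime\prime}+y^\prime y^{\prime\prime}=0$ this forces $x^{\prime\prime}=y^{\prime\prime}=0$; substituting into the first equation of \pref{lm-system} gives $\frac{y^\prime}{y}\left(l\frac{y}{x}-m\frac{x^\prime}{y^\prime}\right)=-(n-1)H$. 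Since $\frac{y^\prime}{y}\to-\infty$ while $H$ remains bounded, along any sequence of such critical points tending to $b$ one obtains $\frac{x}{y}\cdot\frac{x^\prime}{y^\prime}\to\frac{l}{m}$.

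To conclude I would choose local maxima $\sigma_j\to b-0$ of $\frac{x^\prime}{y^\prime}$ with $\frac{x^\prime}{y^\prime}(\sigma_j)\to A:=\limsup\frac{x^\prime}{y^\prime}$ (if $\frac{x^\prime}{y^\prime}$ is eventually monotone the genuine limit exists and the lemma reduces to the classical L'Hôpital rule). These are critical points of $\frac{x^\prime}{y^\prime}$, so the second identity gives $\frac{x}{y}(\sigma_j)\to\frac{l}{mA}$, whence $\frac{l}{mA}\in\left[\liminf\frac{x}{y},\,\limsup\frac{x}{y}\right]$; using local minima symmetrically gives $\frac{l}{ma}\in\left[\liminf\frac{x}{y},\,\limsup\frac{x}{y}\right]$ with $a:=\liminf\frac{x^\prime}{y^\prime}$. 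Recalling $aA=\frac{l}{m}$, which follows from Lemma \ref{lem.4.3} (or \ref{lem.4.4}) together with the reciprocal relation for the positive quantities $\frac{x^\prime}{y^\prime}$ and $\frac{y^\prime}{x^\prime}$, these memberships become $a\in\left[\liminf\frac{x}{y},\,\limsup\frac{x}{y}\right]$ and $A\in\left[\liminf\frac{x}{y},\,\limsup\frac{x}{y}\right]$. Combined with the sandwich inequalities above, this forces $\liminf\frac{x^\prime}{y^\prime}=\liminf\frac{x}{y}$ and $\limsup\frac{x^\prime}{y^\prime}=\limsup\frac{x}{y}$, which is the assertion.

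The hard part is precisely this reverse direction, which genuinely uses \pref{lm-system}: for arbitrary functions the averaging inherent in L'Hôpital can strictly shrink the $\limsup$, so equality is special to this system. The technical points I expect to cost the most work are, first, showing that $\frac{x^\prime}{y^\prime}$ and $\frac{x}{y}$ stay bounded and bounded away from $0$ near $b$ — equivalently that the degenerate behaviour $x^\prime\to-1$, $y^\prime\to0$ is excluded, which I would rule out by the extremum/contradiction argument already used in Lemma \ref{lem.4.1} — and, second, justifying that the extrema of $\frac{x^\prime}{y^\prime}$ are attained at interior critical points accumulating at $b$, so that the product limit $\frac{x}{y}\cdot\frac{x^\prime}{y^\prime}\to\frac{l}{m}$ may legitimately be invoked along them.
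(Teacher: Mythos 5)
Your strategy is in essence the paper's own: both proofs rest on (a) the one\,-sided Cauchy/l'H\^opital inequalities $\liminf\frac{x'}{y'}\leqq\liminf\frac{x}{y}\leqq\limsup\frac{x}{y}\leqq\limsup\frac{x'}{y'}$, (b) evaluation of \pref{lm-system} along sequences of points with $x''=y''=0$ accumulating at $b$, and (c) the product relation $aA=\frac lm$ coming from Lemma \ref{lem.4.3} and the reciprocal identities for positive quantities. Your critical points of $\frac{x'}{y'}$ are exactly the paper's critical points of $x'$ (your determinant argument that $(x'/y')'=0$ together with $x'x''+y'y''=0$ forces $x''=y''=0$ is correct), and your ``membership plus sandwich'' bookkeeping is the paper's collapsing chain of inequalities in different packaging. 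In the non\,-degenerate case the argument is correct.

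There is, however, a genuine gap in the degenerate case $A=\limsup_{s\to b-0}\frac{x'}{y'}=+\infty$, i.e.\ when $x'\to-1$, $y'\to0$ along some subsequence. Your key limit $\frac{x}{y}\cdot\frac{x'}{y'}\to\frac lm$ at the flat points is obtained by dividing the equation by $y'$, and it needs $\frac{y(\sigma_j)}{y'(\sigma_j)}\to0$ along the \emph{maxima} sequence of $\frac{x'}{y'}$; this holds precisely when $y'(\sigma_j)$ stays away from zero, i.e.\ when $A<\infty$. Your proposed remedy --- excluding this behaviour by the Lemma \ref{lem.4.1}-type extremum argument --- cannot work: that argument rules out actual zeros of $y'$ near $b$, not subsequences along which $y'\to0$, and the wild-oscillation scenario $a=0$, $A=\infty$ is perfectly consistent with Lemmas \ref{lem.4.3} and \ref{lem.4.4} (the relation $aA=\frac lm$ just degenerates to $0\cdot\infty$). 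It is excluded only by Lemma \ref{lem.4.6}, whose proof comes after, and invokes, Lemma \ref{lem.4.5}, so you may not use it here. The paper's way around this is to normalize the equation at the flat points by whichever derivative is automatically bounded away from zero along the sequence at hand: along your maxima sequence one has $x'(\sigma_j)\to\liminf x'<0$, so one divides by $x'$ and gets $\frac{y'}{x'}=\frac ml\frac xy-\frac{(n-1)Hx}{lx'}$, whose error term vanishes because $x\to0$; this yields $\frac{x}{y}(\sigma_j)\to\frac{l}{mA}$ (read as $0$ when $A=\infty$), which is all your membership step requires. Note also that your stated subgoal of proving $\frac{x'}{y'}$ and $\frac{x}{y}$ bounded and bounded away from $0$ uniformly near $b$ is both unnecessary and unobtainable at this stage; only control of the chosen denominator along each particular sequence is needed, and the right normalization gives it for free.
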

\begin{proof}
If $ \lim_{ s \to b - 0 } \frac{x^\prime (s)}{ y^\prime (s)} $ exists,
then Lemma \ref{lem.4.5} follows from l'H\^{o}pital's rule.
By contraries,
suppose that $ \lim_{ s \to b - 0 } \frac{x^\prime (s)}{ y^\prime (s)} $ does not exist.
By using the sequence $ \{ s_j \}$ used in the proof of Lemma \ref{lem.4.3},
we see that
\[
	\begin{array}{rl}
	\displaystyle{
	\liminf_{ s \to b - 0 } \frac { x^\prime (s) } { y^\prime (s) }
	}
	= & \!\!\!
	\displaystyle{
	\lim_{ j \to \infty } \frac { x^\prime ( s_j ) } { y^\prime ( s_j ) }
	=
	\frac lm \lim_{ j \to \infty } \frac { y( s_j ) } { x( s_j ) }
	}
	\\
	\geqq & \!\!\!
	\displaystyle{
	\frac lm \liminf_{ s \to b - 0 } \frac { y(s) } { x(s) }
	\geqq
	\frac lm \liminf_{ s \to b - 0 } \frac { y^\prime (s) } { x^\prime (s) }
	=
	\liminf_{ s \to b - 0 } \frac { x^\prime (s) } { y^\prime (s) }
	} .
	\end{array}
\]
Hence,
we have
\[
	\liminf_{ s \to b - 0 } \frac { y(s) } { x(s) }
	=
	\frac ml
	\liminf_{ s \to b - 0 } \frac { x^\prime (s) } { y^\prime (s) }
	=
	\liminf_{ s \to b - 0 } \frac { y^\prime (s) } { x^\prime (s) } .
\]
The formula about the superior limit in Lemma \ref{lem.4.3} is shown by changing $x$ and $y$ 
in the above formula, 
proving Lemma \ref{lem.4.5}.
\qed
\end{proof}
\par
\begin{lem}
There exist
$ \lim_{ s \to b-0 } \frac { y^\prime (s) } { x^\prime (s) } $ and $ \lim_{ s \to b-0 } x^\prime (s)^2 $. 
\label{lem.4.6}
\end{lem}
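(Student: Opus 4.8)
The plan is to reduce Lemma \ref{lem.4.6} to the single statement that $\lim_{s\to b-0}\frac{x'(s)}{y'(s)}$ exists. Near $b$ both $x'$ and $y'$ are negative and nonvanishing (as noted in the proof of Lemma \ref{lem.4.2}), so $u(s):=x'(s)/y'(s)>0$, and \pref{lm-system} gives $x'(s)^2=\frac{u(s)^2}{1+u(s)^2}$, a strictly increasing function of $u$. Hence $\lim x'^2$ and $\lim(y'/x')$ exist as soon as $\lim u$ does, and it suffices to prove $p:=\liminf_{s\to b-0}u=\limsup_{s\to b-0}u=:q$.

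Next I would collect the constraints already at hand. Since $u>0$, the reciprocal identities $\liminf(1/u)=1/q$ and $\limsup(1/u)=1/p$ convert both Lemma \ref{lem.4.3} and Lemma \ref{lem.4.4} into the one relation $pq=l/m$, so that $0<p\le q<\infty$. Lemma \ref{lem.4.5} identifies the extreme values of $u$ with those of $\sigma(s):=x(s)/y(s)$, whence $\liminf\sigma=p$ and $\limsup\sigma=q$. I would also use two consequences of the first equation of \pref{lm-system}: at an interior extremum of $\sigma$ one has $u=\sigma$, and there a computation gives $\sigma''=\frac1{y^2}\!\left(\frac{l}{\sigma}-m\sigma\right)+O\!\left(\frac1y\right)$. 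As the $y^{-2}$ term dominates when $s\to b$, a local maximum of $\sigma$ forces $\sigma\ge\sqrt{l/m}$ and a local minimum forces $\sigma\le\sqrt{l/m}$; thus $q\ge\sqrt{l/m}\ge p$.

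The crux is to upgrade this to $p=q$. Here $q\ge\sqrt{l/m}\ge p$ together with $pq=l/m$ is still consistent with $p<q$, so the local extremum tests cannot by themselves conclude: the obstruction is genuinely global, and one must exclude a sustained oscillation of $(\sigma,u)$ inside the box $[p,q]^2$. My proposal is to pass to the slow time $\tau(s)=\int_{s_0}^{s}\frac{dt}{y(t)}$, which tends to $+\infty$ as $s\to b-0$ because $y(t)\le b-t$ near $b$. In $\tau$ the pair $(\sigma,u)$ obeys $\dot\sigma=\frac{\sigma-u}{\sqrt{1+u^2}}$ and $\dot u=\sqrt{1+u^2}\left(mu-\frac{l}{\sigma}\right)+R(\tau)$ with $R(\tau)=y(1+u^2)(n-1)H\to0$, an asymptotically autonomous planar system whose limit has the unique equilibrium $(\sqrt{l/m},\sqrt{l/m})\in[p,q]^2$, and the trajectory is eventually trapped in this compact box. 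I would then show this confinement is impossible unless the box degenerates, forcing $p=q=\sqrt{l/m}$.

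The hard step, which I expect to be the main obstacle, is exactly the exclusion of a limiting oscillation. The linearization of the limit field at $(\sqrt{l/m},\sqrt{l/m})$ has positive trace and positive determinant, so the equilibrium repels; a bounded planar trajectory that neither converges to this repelling point nor leaves the box would, by the Poincar\'e--Bendixson alternative, accumulate on a periodic orbit around it. I would rule this out by producing a Dulac function, seeking a weight $g=\sigma^{a}u^{b}$ making $\mathrm{div}(g\,\mathbf{V})$ of one sign on the first quadrant, where $\mathbf{V}=(\dot\sigma,\dot u)$ is the limiting field; the absence of periodic orbits, together with the repelling character of the equilibrium and the decay of $R$, then forces the $\omega$-limit set of $(\sigma,u)$ to be the single point $(\sqrt{l/m},\sqrt{l/m})$, so $p=q$. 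This yields $\lim u$, hence $\lim(y'/x')$ and $\lim x'^2$, which is the assertion of Lemma \ref{lem.4.6}; by Lemma \ref{lem.4.2} the common value is $\lim x'^2=l/(l+m)$.
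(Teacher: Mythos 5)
Your reduction to the single claim that $u=x'/y'$ has a limit, and your slow-time reformulation $\dot\sigma=(\sigma-u)/\sqrt{1+u^2}$, $\dot u=\sqrt{1+u^2}\,(mu-l/\sigma)+R(\tau)$, are correct and do expose the structure of the problem (your linearization check is also right: trace and determinant at $(\sqrt{l/m},\sqrt{l/m})$ are positive). But there are two genuine gaps. First, the step you yourself identify as the main obstacle is never carried out: the Dulac function is only ``sought,'' not exhibited, so the central exclusion of oscillation remains a plan rather than a proof. (It happens that your ansatz works: for $l\leqq m$ the choice $g=\sigma^{-l}$ gives
\[
	\mathrm{div}(g\mathbf{V})
	=\frac{\sigma^{-l}}{\sqrt{1+u^2}}\left\{(1+m-l)+2mu^2\right\}>0
\]
on the open quadrant, the $lu/\sigma$ terms cancelling; suitable exponents $a=-l/(l-m)$, $b=-1+1/(l-m)$ handle $l>m$. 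But this verification is exactly what is missing.) Second, and more seriously, $0<p\leqq q<\infty$ does \emph{not} follow from $pq=l/m$: Lemmas \ref{lem.4.3} and \ref{lem.4.4} are reciprocal relations that are equally consistent with the degenerate possibility $p=0$, $q=\infty$, i.e.\ $x'(s)^2$ oscillating between $0$ and $1$. In that case there is no compact box in the open quadrant trapping the orbit, so the forward trajectory is not precompact in the region where the limit field is regular and the perturbation vanishes locally uniformly, and the Poincar\'e--Bendixson theory for asymptotically autonomous systems (Markus--Thieme), whose hypotheses you would in any event need to state and verify --- precompactness of the orbit and exclusion of cyclic chains, the latter being automatic here only because a source admits no homoclinic loop --- is simply unavailable. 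Ruling out this degenerate oscillation is part of what Lemma \ref{lem.4.6} has to prove, so asserting it is circular.

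It is worth comparing with the paper's own proof, which is elementary and covers the degenerate case automatically. The paper assumes $\underline L=\liminf(y/x)<\limsup(y/x)=\bar L$, chooses points $s_j$ where the generating curve is tangent to lines $y=L_jx$ with $L_j\to\underline L$, and shows by a direct computation --- using the dominance of the $1/y(s)$ terms near $b$ --- that the squared distance of $(y/x,\,y'/x')$ from $(\sqrt{m/l},\sqrt{m/l})$ is strictly increasing on the interval from $s_j$ until the pair exits a small ball around $(\underline L,\underline L)$; the limiting exit point must then lie strictly below the inferior limit in one coordinate, a contradiction. This localized Lyapunov-type expansion estimate is precisely the quantitative content behind your observation that the equilibrium repels, but it needs no boundedness of $y/x$ or $y'/x'$ away from $0$ and $\infty$ (the argument is run inside a small ball, where everything is controlled) and no general theory of asymptotically autonomous flows. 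Your route could very likely be completed with the Dulac function above, but only after the compactness gap is filled --- and the natural way to fill it is essentially the paper's own estimate.
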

\begin{proof}
Set
\[
	A(s) = \frac { y(s) } { x(s) },
	\quad
	B(s) = \frac { y^\prime (s) } { x^\prime (s) } ,
	\quad
	\liminf_{ s \to b - 0 } A(s) = \underline L ,
	\quad
	\limsup_{ s \to b - 0 } A(s) = \bar L .
\]
If $ \underline L = \bar L $,
then the assertion follows from Lemma \ref{lem.4.5}.
\par
Assuming $ \underline L < \bar L $, we will find a contradiction.
By Lemma \ref{lem.4.5},
we have 
$ \liminf_{ s \to b - 0 } B(s) = \underline L $,
$ \limsup_{ s \to b - 0 } B(s) = \bar L $.
By Lemma \ref{lem.4.4},
we have  $ \bar L \underline L = \frac m l $. 
Hence, our assumption
implies  $ \displaystyle{ \underline L < \sqrt{ \frac ml } < \bar L } $.
Taking into consideration of the shape of the generating curve, we choose the 
the sequence $ \{ s_j \} $  such that 	the generating curve is tangent to the line $ y = L_j x $ at $ s = s_j $, 
$\lim_{ j \to \infty } A( s_j ) = \underline L $ and $\lim_{ j \to \infty } L_j = \underline L$ when 
$\lim_{ j \to \infty } s_j = b - 0 $. Next, we choose also the sequence $ \{ \tilde{s}_j \} $ such that
\[s_j < \tilde s_j < s_{ j+1 } < \tilde s_{ j+1 },  \lim_{ j \to \infty } A ( \tilde s_j ) = \bar L,
\mbox{and} \lim_{ j \to \infty } \tilde s_j = b - 0. \]

The last property above implies
\[
	B ( s_j ) = L_j \to \underline L \quad \mbox{as} \quad j \to \infty .
\]
Set
\[
	B_\epsilon = \{ ( A,B ) \in \mathbb{R}^2 \, | \, ( A - \underline L )^2 + ( B - \underline L )^2 < \epsilon^2 \} .
\]
If $ \epsilon > 0 $ is sufficiently small,
then we may assume that
\[
	( A( s_j ) , B( s_j ) ) \in B_\epsilon ,
	\quad
	( A( \tilde s_j ) , B( \tilde s_j ) ) \in B_\epsilon^c .
\]
Hence there exists $ \{ \hat s_j \} $ such that
\[
	s_j < \hat s_j < \tilde s_j ,
	\quad
	( A(s) , B(s) ) \in \bar B_\epsilon \quad \mbox{for} \quad s \in [ s_j , \hat s_j ) ,
	\quad
	( A( \hat s_j ) , B ( \hat s_j ) ) \in \partial B_\epsilon .
\]
In order to  consider the behavior of $ ( A(s) , B(s) ) $ on the 
interval $ I_j = [ s_j , \hat s_j ] $, we now compute
\[
	\begin{array}{rl}
	\displaystyle{
	\frac 12 \frac d { ds } \left( A(s) - \sqrt { \frac ml } \right)^2
	}
	= & \!\!\!
	\displaystyle{
	\left( A(s) - \sqrt { \frac ml } \right) A^\prime (s)
	}
	\\
	= & \!\!\!
	\displaystyle{
	\left( A(s) - \sqrt { \frac ml } \right) 
	\frac {( y^\prime (s) x(x) - y(s) x^\prime (s)) } { x(s)^2 }
	}
	\\
	= & \!\!\!
	\displaystyle{
	\frac { x^\prime (s) } { x(s) }
	\left( A(s) - \sqrt { \frac ml } \right) 
	( B(s) - A(s) ) .
	}
	\end{array}
\]
When $ s \in I_j $,
\[
	\left| A(s) - \sqrt { \frac ml } \right| \leqq C
	,
\]
\[
	| B(s) - A(s) |
	=
	| B(s) - \underline L - ( A(s) - \underline L ) |
	\leqq
	2 \epsilon .
\]
Therefore, it holds that
\[
	\left|
	\frac { x^\prime (s) } { x(s) }
	-
	\frac { y^\prime (s) } { y(s) }
	\right|
	=
	| A(s) - B(s) | \left| \frac { x^\prime (s) } { y(s) } \right|
	\leqq
	2 \epsilon \left| \frac { x^\prime (s) } { y(s) } \right| ,
\]
which implies
\[
	\frac { x^\prime (s) } { x(s) }
	=
	\frac { y^\prime (s) + O ( \epsilon ) x^\prime (s) } { y(s) }
	.
\]
Consequently, we have
\[
	\left|
	\frac 12 \frac d { ds } \left( A(s) - \sqrt { \frac ml } \right)^2
	\right|
	=
	\left|
	\frac { y^\prime (s) + O ( \epsilon ) x^\prime (s) } { y(s) }
	\right|
	O ( \epsilon )
	=
	\frac { O( \epsilon ) } { y(s) } ,
\]
where we used $ | x^\prime (s) | \leqq 1 $,
$ | y^\prime (s) | \leqq 1 $.
On the other hand
\[
	\begin{array}{rl}
	\displaystyle{
	\frac 12 \frac d { ds } \left( B(s) - \sqrt { \frac ml } \right)^2
	}
	= & \!\!\!
	\displaystyle{
	\left( B(s) - \sqrt { \frac ml } \right) B^\prime (s)
	}
	\\
	= & \!\!\!
	\displaystyle{
	\left( B(s) - \sqrt { \frac ml } \right)
	\frac { (y^{\prime\prime} (s) x^\prime (s) - y^\prime (s) x^{ \prime \prime } (s) )} { ( x^\prime (s) )^2 }
	}
	\\
	= & \!\!\!
	\displaystyle{
	- \frac 1 { ( x^\prime (s) )^2 }
	\left( B(s) - \sqrt { \frac ml } \right)
	\left\{
	l \frac { y^\prime (s) } { x(s) } - m \frac { x^\prime (s) } { y(s) }
	+
	( n - 1 ) H(s)
	\right\}
	}
	\\
	= & \!\!\!
	\displaystyle{
	- \frac l { x^\prime (s) y(s) }
	\left( B(s) - \sqrt { \frac ml } \right)
	\left( A(s) B(s) - \frac ml \right)
	}
	\\
	& \quad
	\displaystyle{
	- \,
	\frac { ( n - 1 ) H(s) } { ( x^\prime (s) )^2 }
	\left( B(s) - \sqrt { \frac ml } \right) .
	}
	\end{array}
\]
Set $ \displaystyle{ \underline L = \lambda \sqrt{ \frac ml } } $,
and then $ 0 \leqq \lambda < 1 $.
We have on $ I_j $,
\[
	0 < A(s) < \frac { 1 + \lambda } 2 \sqrt{ \frac ml } ,
	\quad
	0 < B(s) < \frac { 1 + \lambda } 2 \sqrt{ \frac ml } ,
	\quad
	-1 \leqq x^\prime (s) < 0 ,
	\quad
	y(s) > 0
\]
for large $ j $.
Hence there exists $ \delta > 0 $ independent of $ \epsilon $ such that
\[
	- \frac l { x^\prime (s) y(s) }
	\left( B(s) - \sqrt { \frac ml } \right)
	\left( A(s) B(s) - \frac ml \right)
	\geqq
	\frac \delta { y(s) } .
\]
Since it holds on the interval $ I_j $ that
\[
	\frac { 1 - ( x^\prime (s) )^2 } { ( x^\prime (s) )^2 }
	=
	\left( \frac { y^\prime (s) } { x^\prime (s) } \right)^2
	=
	\underline L^2 ( 1 + O(1) )
	=
	O(1) ,
\]
we have
\[
	\inf \left\{ ( x^\prime (s) )^2 \, \left| \,
	s \in \bigcup_j I_j \right. \right\} > 0 .
\]
Hence
\[
	\left| \frac { ( n -1 ) H(s) } { ( x^\prime (s) )^2 }
	\left( B(s) - \sqrt{ \frac ml } \right) \right|
	\leqq C .
\]
Consequently, we have
\[
	\frac 12 \frac d { ds }
	\left\{
	\left( A(s) - \sqrt{ \frac ml } \right)^2
	+
	\left( B(s) - \sqrt{ \frac ml } \right)^2
	\right\}
	\geqq
	\frac 1 { y(s) }
	\left( \delta + O( \epsilon ) \right)
	- C
\]
on $ I_j $.
If $ j $ is sufficiently large,
then $ y(s) > 0 $ is sufficiently small.
Taking $ \epsilon $ small,
we have
\[
	\frac 12 \frac d { ds }
	\left\{
	\left( A(s) - \sqrt{ \frac ml } \right)^2
	+
	\left( B(s) - \sqrt{ \frac ml } \right)^2
	\right\}
	\geqq
	\frac \delta { 2 y(s) } > 0
\]
on $ I_j $ for large $ j $.
Hence
\[
	\begin{array}{rl}
	\displaystyle{
	\left( A( \hat s_j ) - \sqrt{ \frac ml } \right)^2
	+
	\left( B( \hat s_j ) - \sqrt{ \frac ml } \right)^2
	}
	\geqq & \!\!\!
	\displaystyle{
	\left( A( s_j ) - \sqrt{ \frac ml } \right)^2
	+
	\left( B( s_j ) - \sqrt{ \frac ml } \right)^2
	}
	\\
    = & \!\!\!
	\displaystyle{
	2 \left( L_j - \sqrt{ \frac ml } \right)^2 ,
	}
	\end{array}
\]
where the equality follows from the fact $A(s_{j}) =B(s_{j})=L_{j}$.
Taking a suitable subsequence,
we have $ ( A( \hat s_j ) , B( \hat s_j ) ) \to ( \hat A , \hat B ) $ as $ j \to \infty $,
where
\[
	( \hat A , \hat B )
	\in
	\partial B_\epsilon
	\cap
	\left\{
	( A , B ) \in \mathbb{R}^2 \, \left| \,
	\left( A - \sqrt{ \frac ml } \right)^2
	+
	\left( B - \sqrt{ \frac ml } \right)^2
	\geqq
	2 \left( \underline L - \sqrt{ \frac ml } \right)^2 \right. \right\} .
\]
This shows that
\[
	\hat A < \underline L
	\quad \mbox{or} \quad
	\hat B < \underline L .
\]
This is a contradiction.
Indeed,
if $ \hat A < \underline L $,
then
\[
	\liminf_{ s \to b - 0 } A(s) = \underline L
	> \hat A
	= \lim_{ j \to \infty } A( \hat s_j )
	\geqq
	\liminf_{ s \to b - 0 } A(s) .
\]
If $ \hat B < \underline L $,
then
\[
	\liminf_{ s \to b - 0 } B(s) = \underline L
	> \hat B
	= \lim_{ j \to \infty } B( \hat s_j )
	\geqq
	\liminf_{ s \to b - 0 } B(s) .
\]
\par
Hence we have  $ \underline L = \bar L $, proving Lemma \ref{lem.4.6}.
\qed
\end{proof}
\par
Proposition \ref{prop.4.3} is proved by Lemmas \ref{lem.4.2}--\ref{lem.4.6}.

We are now in a position to study the system \pref{DiffEq2} with $ x(0) = 0 $ and $ y(0) = 0 $.
Proposition \ref{prop.4.3} tells us $ q(0)^2 = \frac lm $.
We may suppose $q(0)= \sqrt{\frac{l}{m}}$,
because the generating curve is in the domain of $ x > 0 $ and $ y > 0 $.
Thus,
in this case,
we have the following singular initial value problem
\begin{equation}
	\left\{
	\begin{array}{rl}
	\displaystyle{
	y \frac { dq } { dy }
	}
	= & \!\!\!
	\displaystyle{
	( 1 + q^2 )
	\left(
	- mq + \frac { ly } { \int_0^y q( \xi ) \, d \xi }
	\right)
	+ ( n - 1 ) \tilde H (y) ( 1 + q^2 )^{ \frac 32 } y,
	}
	\\
	q(0) = & \!\!\!
	\displaystyle{
	\sqrt{ \frac lm }.
	}
	\end{array}
	\right.
	\label{DiffEq-qlm}
\end{equation}
We furthermore transform this  by setting
\[
	q(y) = \sqrt{ \frac lm } + r(y) .
\]
The new one for $ r(y) $ is obtained in the following Lemma.
\begin{lem}
$ r = r(y) $ satisfies
\begin{equation}
	\left\{
	\begin{array}{rl}
	\displaystyle{
	y \frac { dr } { dy }
	}
	= & \!\!\!
	\displaystyle{
	- ( l + m ) r(y) + F_1 (r(y)) + F_2 ( r( \cdot ), y ) + F_3 (r(y),y),
	}
	\\
	r(0) = & \!\!\! 0 ,
	\end{array}
	\right. 
	\label{DiffEq-r}
\end{equation}
where we set
\begin{eqnarray*}
	F_1 ( r(y) ) &=& - r(y)^2 \left( m r(y) + 2 \sqrt{ lm } \right),  \\
	F_2 ( r( \cdot ) ,y ) &=&
	- \sqrt{ lm }
	\left\{ 1 + \left( r(y) + \sqrt{ \frac lm } \right)^2 \right\}
	\frac { \displaystyle{
	\frac { \sqrt m } y \int_0^y r( \xi ) \, d \xi
	} }
	{ \displaystyle{
	\frac { \sqrt m } y \int_0^y r( \xi ) \, d \xi + \sqrt l
	} },  \\
	F_3 ( r(y) , y )
	&=&
	( n - 1 ) \tilde H (y)
	\left\{ 1 + \left( r(y) + \sqrt{ \frac lm } \right)^2 \right\}^{ \frac 32 } y.
\end{eqnarray*}
\label{lem.4.7}
\end{lem}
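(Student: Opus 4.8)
The plan is to prove \pref{DiffEq-r} by direct substitution of $q(y) = \sqrt{l/m} + r(y)$ into \pref{DiffEq-qlm}; this is a change of unknown, so the lemma is computational and carries no analytic difficulty. First I would record the two immediate reductions: since $\sqrt{l/m}$ is constant we have $y\,dq/dy = y\,dr/dy$, and the initial condition $q(0) = \sqrt{l/m}$ gives $r(0) = 0$. The entire task is thus to reorganize the right-hand side of \pref{DiffEq-qlm} into the four prescribed pieces $-(l+m)r$, $F_1$, $F_2$, $F_3$.

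The one nonobvious step is the treatment of the singular term $-mq + \frac{ly}{\int_0^y q\,d\xi}$. Writing $\int_0^y q(\xi)\,d\xi = \sqrt{l/m}\,y + \int_0^y r(\xi)\,d\xi$ and abbreviating $J(y) = \frac{\sqrt m}{y}\int_0^y r(\xi)\,d\xi$, I would rationalize the quotient by $\sqrt m$ to obtain $\frac{ly}{\int_0^y q\,d\xi} = \frac{l\sqrt m}{\sqrt l + J}$, using $\sqrt m\,\sqrt{l/m} = \sqrt l$. Combining this with $-mq = -\sqrt{lm} - mr$ and placing the constant $-\sqrt{lm}$ over the common denominator $\sqrt l + J$, the constant numerators cancel exactly, because $\sqrt{lm}\,\sqrt l = l\sqrt m$. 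This cancellation is the heart of the computation and is precisely where the choice $q(0) = \sqrt{l/m}$ from Proposition \ref{prop.4.3} pays off: it is the equilibrium value that renders the apparently singular quotient regular, leaving $-mq + \frac{ly}{\int_0^y q\,d\xi} = -mr - \sqrt{lm}\,\frac{J}{\sqrt l + J}$.

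The remaining steps are bookkeeping. Multiplying this identity by $1 + q^2 = 1 + (r + \sqrt{l/m})^2$ splits the product into two pieces; the piece $-\sqrt{lm}\{1 + (r+\sqrt{l/m})^2\}\frac{J}{\sqrt l + J}$ is by definition $F_2(r(\cdot),y)$. For the other piece $-mr\{1 + (r+\sqrt{l/m})^2\}$ I would expand $(r+\sqrt{l/m})^2 = r^2 + 2\sqrt{l/m}\,r + l/m$ and use $1 + l/m = (l+m)/m$ together with $m\sqrt{l/m} = \sqrt{lm}$, which yields $-(l+m)r - r^2(mr + 2\sqrt{lm})$; the first term is the linear part of \pref{DiffEq-r} and the second is $F_1(r(y))$. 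Finally, the mean-curvature term $(n-1)\tilde H(y)(1+q^2)^{3/2}y$ is $F_3(r(y),y)$ by inspection. Summing these identities establishes \pref{DiffEq-r}.

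In short, there is no genuine obstacle here beyond a careful algebraic rearrangement; the only place demanding attention is the cancellation of the constant terms in the singular quotient, which I would verify in isolation before multiplying through by $1 + q^2$.
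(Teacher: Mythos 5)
Your proposal is correct and follows essentially the same route as the paper: substitute $q=\sqrt{l/m}+r$, isolate the mean-curvature term as $F_3$, expand $-mr\{1+(r+\sqrt{l/m})^2\}$ into $-(l+m)r+F_1$, and obtain $F_2$ from the cancellation $\sqrt{lm}\,\sqrt{l}=l\sqrt{m}$ after rationalizing the singular quotient by $\sqrt{m}/y$. The only difference is the (immaterial) order of operations — you perform the constant cancellation before multiplying by $1+q^2$, while the paper multiplies first and then cancels — so nothing further is needed.
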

\begin{proof}
For the right hand side of the first equation of \pref{DiffEq-qlm},
we compute
\begin{eqnarray*}
	&&
	\displaystyle{
	( 1 + q^2 )
	\left(
	- mq + \frac { ly } { \displaystyle{ \int_0^y q( \xi ) \, d \xi } }
	\right)
	+
	( n - 1 ) \tilde H(y) ( 1 + q^2 )^{ \frac 32 } y
	}
	\\
	&&
	=
	\displaystyle{
	\left\{ 1 + \left( \sqrt{ \frac lm } + r \right)^2 \right\}
	\left\{
	- m \left( \sqrt { \frac lm } + r \right)
	+ \frac { ly } { \displaystyle{ \int_0^y \left( \sqrt{ \frac lm } + r ( \xi ) \right) d \xi } }
	\right\}
	}
	\\
	&&
	\qquad \qquad
	\displaystyle{
	+ \,
	( n - 1 ) \tilde H(y)
	\left\{ 1 + \left( \sqrt{ \frac lm } + r \right)^2 \right\}^{ \frac 32 } y
	}
	\\
	\quad
	&&
	=
	\displaystyle{
	-
	\left\{ 1 + \left( \sqrt{ \frac lm } + r \right)^2 \right\}
	mr
	}
	\\
	&&
	\qquad \qquad
	\displaystyle{
	+ \,
	\left\{ 1 + \left( \sqrt{ \frac lm } + r \right)^2 \right\}
	\left \{
	- m \sqrt{ \frac lm }
	+ \frac { ly }
	{ \displaystyle{ \int_0^y \left( \sqrt{ \frac lm } + r ( \xi ) \right) d \xi } }
	\right\}
	}
	\\
	&&
	\qquad \qquad
	\displaystyle{
	+ \,
	( n - 1 ) \tilde H(y)
	\left\{ 1 + \left( \sqrt{ \frac lm } + r \right)^2 \right\}^{ \frac 32 } y.
	}
\end{eqnarray*}
Since we have
\[
	\begin{array}{l}
	\displaystyle{
	-
	\left\{ 1 + \left( \sqrt{ \frac lm } + r \right)^2 \right\}
	mr
	}
	\\
	\quad
	=
	\displaystyle{
	- \left( 1 + \frac lm + 2 \sqrt{ \frac lm } r + r^2 \right) mr
	=
	- \left( m + l + 2 \sqrt{ lm } r + m r^2 \right) r
	}
	\\
	\quad
	=
	\displaystyle{
	- ( l + m ) r - r^2 \left( mr + 2 \sqrt{ lm } \right),
	}
	\end{array}
\]
and
\[
	\begin{array}{rl}
	\displaystyle{
	- m \sqrt{ \frac lm }
	+ \frac { ly }
	{ \displaystyle{ \int_0^y \left( \sqrt{ \frac lm } + r ( \xi ) \right) d \xi } }
	}
	=
	& \!\!\!
	\displaystyle{
	- \sqrt{ lm }
	+ \frac { l \sqrt m }
	{ \displaystyle{ \sqrt l + \frac { \sqrt m } y \int_0^y r( \xi )\, d \xi } }
	}
	\\
	\quad
	=
	& \!\!\!
	\displaystyle{
	- \frac { \displaystyle{ \frac { m \sqrt l } y \int_0^y r( \xi ) \, d \xi } }
	{ \displaystyle{ \sqrt l + \frac { \sqrt m } y \int_0^y r( \xi )\, d \xi } },
	}
	\end{array}
\]
Lemma \ref{lem.4.7} was proved.
\qed
\end{proof}
\begin{rem}
There is a non-local part $ \int_0^y r( \xi ) \, d \xi $ in $ F_2 $.
Therefore we should write $ F_2 ( r ( \cdot ) , y ) $,
not $ F ( r(y) , y ) $.
That is,
$ F_2 $ is defined on $ ( \mbox{a function space} ) \times \mathbb{R} $,
not on $ \mathbb{R}^2 $.
\end{rem}
\par
Multiplying $ y^{l+m-1} $ on the first equation of \pref{DiffEq-r} and integrating it with respect to $y$, we have
an integral equation 
\begin{equation}
	r(y) = 	\Theta (r) (y) ,
	\label{IntEq-r}
\end{equation}
where we set
\begin{eqnarray*}
	\Theta (r) (y)
	& \!\!\! = & \!\!\!
	\frac 1 { y^{ l+m } } \int_0^y F( \eta ) \eta^{ l + m-1 } d \eta ,
	\\
	F(y)
	& \!\!\! = & \!\!\! F_1 (r(y)) + F_2 (r( \cdot),y) + F_3 (r(y),y).
\end{eqnarray*}
Using the same notations $X_Y$ and $X_{Y,M}$ defined in \S~2,
we have
\begin{prop}
\begin{itemize}
\item[{\rm (i)}]
If $ \tilde H $ is bounded,
then there exist constants $M$ and $Y$ such that the integral equation {\rm \pref{IntEq-r}} has a unique solution $r$.
\item[{\rm (ii)}]
If $ \tilde H $ is bounded and continuous,
then the solution $r$ given in {\rm (i)} is a solution of {\rm \pref{DiffEq-r}}.
\end{itemize}
\label{prop.4.4}
\end{prop}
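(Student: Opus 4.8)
The plan is to prove both assertions by the fixed-point scheme already used in Propositions \ref{prop.2.2} and \ref{prop.4.2}: I would show that for a suitably large $M$ and small $Y$ the map $\Theta$ sends $X_{Y,M}$ into itself and is a contraction there, and then invoke Banach's fixed-point theorem. Throughout, $C$ denotes a positive constant depending only on $l$, $m$, $n$ and $\sup_y|\tilde H(y)|$, but not on $M$ or $Y$; I would fix $M$ first and always take $Y$ small enough that $MY$ is bounded, so that for every $r\in X_{Y,M}$ the factor $1+(r(\eta)+\sqrt{l/m})^2$ occurring in $F_2$ and $F_3$ stays below a constant $C_0$, where one checks $C_0\to 1+l/m$ as $MY\to0$. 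The decisive new feature, absent in the earlier propositions, is the \emph{non-local} term $F_2$, and the whole argument hinges on controlling it.

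For the self-mapping property I would take $r\in X_{Y,M}$, so that $|r(\eta)|\le M\eta$ and hence $\left|\int_0^\eta r(\xi)\,d\xi\right|\le \frac{M}{2}\eta^2$. The local terms are estimated exactly as before, giving $|F_1(r(\eta))|\le C(M^3\eta^3+M^2\eta^2)$ and $|F_3(r(\eta),\eta)|\le C\eta$, whose weighted integrals contribute terms of size $C(M^3Y^2+M^2Y)$ and $C$. For the non-local term I would set $u(\eta)=\frac{\sqrt m}{\eta}\int_0^\eta r(\xi)\,d\xi$, so that $|u(\eta)|\le\frac{\sqrt m M}{2}\eta$; for small $Y$ the denominator $u+\sqrt l$ stays near $\sqrt l$, giving $\left|\frac{u}{u+\sqrt l}\right|\le \frac{\sqrt m M\eta}{2\sqrt l(1-\delta)}$ with $\delta\to0$ as $MY\to0$, whence $|F_2(r(\cdot),\eta)|\le \frac{mC_0}{2(1-\delta)}M\eta$. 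Its weighted integral contributes $\frac{mC_0}{2(1-\delta)(l+m+1)}M$, a term that is \emph{linear in $M$ and does not shrink with $Y$}. Collecting everything,
\[
\|\Theta(r)\|_X\le \frac{mC_0}{2(1-\delta)(l+m+1)}\,M+C\bigl(M^3Y^2+M^2Y\bigr)+C,
\]
and since the $M$-linear coefficient tends to $\frac{l+m}{2(l+m+1)}<\frac12$, it leaves a slack exceeding $\frac12 M$: I would fix $M$ so large that the constant $C$ is absorbed, then shrink $Y$ to kill the $M^3Y^2+M^2Y$ term, obtaining $\|\Theta(r)\|_X\le M$.

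For the contraction I would estimate $\Theta(r_1)-\Theta(r_2)$ term by term. The local parts $F_1$ and $F_3$ are treated by the mean value theorem as in Proposition \ref{prop.2.2}, producing contributions of size $C(M^2Y^2+MY)\|r_1-r_2\|_X$ that vanish as $Y\to0$. For $F_2$ the difference splits into a harmless part from the prefactor $1+(r+\sqrt{l/m})^2$, which is $O(\eta^2)\|r_1-r_2\|_X$, and a part from the fraction, where I would use $\frac{u_1}{u_1+\sqrt l}-\frac{u_2}{u_2+\sqrt l}=\frac{\sqrt l(u_1-u_2)}{(u_1+\sqrt l)(u_2+\sqrt l)}$ together with $|u_1-u_2|\le\frac{\sqrt m}{2}\eta\,\|r_1-r_2\|_X$ and the \emph{sharp} denominator bound $(u_1+\sqrt l)(u_2+\sqrt l)\ge l(1-\delta)$. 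This yields a weighted-integral coefficient again tending to $\frac{l+m}{2(l+m+1)}<\frac12$, so the total contraction constant is below $1$ for small $Y$, and Banach's theorem produces a unique fixed point $r\in X_{Y,M}$, giving (i). For (ii), the fixed point lies in $C[0,Y]$ with $r(0)=0$; when $\tilde H$ is continuous each $F_i$ is continuous on $[0,Y]$ — in particular $\frac1\eta\int_0^\eta r\,d\xi\to0$ as $\eta\to+0$, so the quotient in $F_2$ is continuous up to $0$ and its denominator stays positive — whence $y^{l+m}r(y)=\int_0^y F(\eta)\eta^{l+m-1}\,d\eta$ has a $C^1$ right-hand side, $r$ is differentiable, and differentiation recovers \pref{DiffEq-r}.

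The main obstacle is precisely the non-local term $F_2$: unlike the algebraic nonlinearities in Propositions \ref{prop.2.2} and \ref{prop.4.2}, it generates, in both the self-mapping and the contraction estimates, a term genuinely of order $M$ (respectively of order $\|r_1-r_2\|_X$) that is \emph{not} made small merely by shrinking $Y$. The argument therefore stands or falls on verifying that the structural constant multiplying this term, the limit $\frac{l+m}{2(l+m+1)}$, is strictly less than $1$ — which is secured by the factor $\frac12$ coming from $\int_0^\eta\xi\,d\xi=\frac12\eta^2$ and by the identity $m(1+l/m)=l+m$.
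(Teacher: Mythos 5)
Your proposal is correct and follows essentially the same route as the paper: the same space $X_{Y,M}$, the same operator $\Theta$ with the decomposition $F_1+F_2+F_3$, the same control of the non-local term via $\bigl|\int_0^y r(\xi)\,d\xi\bigr|\leqq \frac M2 y^2$ and a lower bound on the denominator, weighted integration against $\eta^{l+m-1}$, Banach's fixed point theorem, and the same differentiation argument for (ii). The only difference is bookkeeping: you keep the sharp denominator bound so your structural constant is $\frac{l+m}{2(l+m+1)}<\frac12$, whereas the paper uses the cruder bound $\frac{\sqrt l}{\sqrt l-\frac{\sqrt m}2 M\eta}\leqq 2$ and obtains $\frac{l+m}{l+m+1}<1$; either margin suffices for the self-mapping and contraction properties.
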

\begin{proof}
We shall find a fixed point of the mapping $ \Theta : X_{Y,M} \longrightarrow  X_{Y,M}$.
The proof is accomplished by the following two steps.
\par
First we show that there exist $M$ and $Y$ such that $\Theta$ is a mapping from $ X_{Y,M} $ into itself.
Take any $ r \in X_{Y,M} $.
Then we have
\[
	| F_1 (r(y)) |
	\leqq
	M^2 y^2
	\left( mM y + 2 \sqrt{ lm } \right)
	\leqq
	C \left( M^2 y^2 + M^3 y^3 \right).
\]
By virtue of
\[
	\left|
	\frac 1y \int_0^y r( \xi ) \, d \xi
	\right|
	\leqq
	\frac 1y \int_0^y \| r \|_X \xi \, d \xi
	\leqq
	\frac { M y } 2,
\]
if constants $M$ and $Y$ satisfy $ \displaystyle{ MY < 2 \sqrt { \frac lm } } $,
then we have
\[
	\sqrt{ \frac ml } \frac 1y \int_0^y r( \xi ) \, d \xi + 1
	\geqq
	1 - \sqrt{ \frac ml } \frac { My } 2
	> C^{-1} > 0 .
\]
Then,
\[
	\begin{array}{rl}
	| F_2 (r( \cdot ),y) |
	\leqq & \!\!\!
	\displaystyle{
	\sqrt{ lm }
	\left\{ 1 + \left( My + \sqrt{ \frac lm } \right)^2 \right\}
	\frac { \displaystyle{ \frac { \sqrt m } 2 My } }
	{ \displaystyle{ \sqrt l - \frac { \sqrt m } 2 M y } }
	}
	\\
	\leqq & \!\!\!\!
	\displaystyle{
	\frac { \sqrt l } { \displaystyle{ \sqrt l - \frac { \sqrt m } 2 M y } }
	\frac { l + m } 2 M y
	+ C \left( M^2 y^2 + M^3 y^3 \right).
	}
	\end{array}
\]
Also, by virtue of the inequality
\[
	\left\{1 + \left( My + \sqrt {\frac{l}{m}} \right)^2 \right\}^{\frac{1}{2}}
	\leqq 1 + My + \sqrt {\frac{l}{m}}
	\quad \mbox{for} \quad y > 0 ,
\]
we have
\[
	| F_3 (r(y),y) |
	\leqq
	C \left( 1 + M^3 y^3 \right) y
	=
	C \left( y + M^3 y^4 \right).
\]
Suppose that $M$ and $Y$ satisfy $ \sqrt m MY \leqq \sqrt l $, which implies   
\[
	\frac{\sqrt{l}}{ \displaystyle{ \sqrt{l} - \frac{\sqrt{m}}{2} M \eta } } \leqq 2
	\quad \mbox{for} \quad \eta \in (0,Y] .
\]
Then,
\[
	\begin{array}{rl}
	\displaystyle{\left| \frac { \Theta (r) (y) } {y} \right|}
	\leqq & \!\!\!
	\displaystyle{
	\frac 1 { y^{ l+m+1 } } \int_0^y
	\left( | F_1 (r(\eta)) | + | F_2 (r(\cdot),\eta) | + | F_3 (r(\eta),\eta) | \right) \eta^{ l+m-1 } d \eta
	} \\
	\leqq & \!\!\!
	\displaystyle{
	\frac 1 { y^{ l+m+1 } } \int_0^y
	\left\{
	\frac { \sqrt l } { \displaystyle{ \sqrt l - \frac { \sqrt m } {2} M \eta } }
	\frac{l + m}{2} M \eta \right. } \\
    &
	\qquad \qquad
	\displaystyle{
	\left.
	+ \,
	C \left( \eta + M^2 \eta^2 + M^3 \eta^3 + M^3 \eta^4 \right)
	\vphantom{ \frac { \sqrt 1 } { \displaystyle{ \frac { \sqrt m } 2 } } }
	\right\}
	\eta^{ l+m-1 } d \eta
	}
	\\
	\leqq & \!\!\!
	\displaystyle{
	\frac { l+m } { l+m+1 } M
	+
	C \left( 1 + M^2 Y + M^3 Y^2 + M^3 Y^3 \right)}.
	\end{array}
\]
We take some $M$ and $Y$ satisfying these two conditions:
\begin{eqnarray}
&&	MY < \sqrt { \frac lm } ,
	\label{MY1}    \\
&&	\frac { l+m } { l+m+1 } M
	+
	C \left( 1 + M^2 Y + M^3 Y^2 + M^3 Y^3 \right)
	\leqq M.
	\label{MY2}
\end{eqnarray}
Then,
$ \Theta $ maps $ X_{M,Y} $ into itself.
\par
Suppose that $ M $ and $ Y $ satisfy (\ref{MY1}) and (\ref{MY2}).
Then,
we note that any $M$ and $Y'(<Y)$ also satisfy the same conditions (\ref{MY1}) and (\ref{MY2}).
Next we show that,
taking $ Y $  more smaller if necessary,
$ \Theta $ is a contraction mapping from $ X_{Y,M} $ into itself.
Take $ r_1 $,
$ r_2 \in X_{Y,M} $.
Then we have
\[
	\Theta ( r_1 ) (y) - \Theta ( r_2 ) (y)
	=
	\frac 1 { y^{ l+m } }
	\int_0^y
	\sum_{ j=1 }^3 \left( F_{j1} (\eta) - F_{j2} (\eta) \right) \eta^{ l+m-1 } d \eta ,
\]
where
\[
	F_{1k} (y) = F_1 ( r_k (y) ),
	\quad
	F_{2k} (y) = F_2 ( r_k ( \cdot ) , y ) ,
	\quad
	F_{3k} (y) = F_3 ( r_k (y) , y )\quad (k=1,2).
\]
For $F_{1k}(y)$,
we see that
\[
	\begin{array}{l}
	\displaystyle{
	| F_{11} (y) - F_{12} (y) |
	}
	\\
	\quad
	=
	\displaystyle{
	\left|
	\left\{
	m \left( r_1(y)^2 + r_1 (y) r_2 (y) + r_2 (y)^2 \right)
	+ 2\sqrt{lm } \left( r_1 (y) + r_2 (y) \right)
	\right\}
	( r_1 (y) - r_2 (y) )
	\right|
	}
	\\
	\quad
	\leqq
	\displaystyle{
	C \left( My^2 + M^2 y^3 \right) \| r_1 - r_2 \|_X .
	}
	\end{array}
\]
For $F_{2k}(y)$,
we set
$$
	F_{21}(y) - F_{22}(y) = G_{1}(y) + G_{2}(y) + G_{3}(y),
$$
where
\begin{eqnarray*}
	G_{1}(y)
	& \!\!\! = & \!\!\!
	\displaystyle{
	- \sqrt{lm}
	\left\{
	\left( r_1(y) + \sqrt{ \frac lm } \right)^2
	-
	\left( r_2(y) + \sqrt{ \frac lm } \right)^2
	\right\}
	\frac
	{ \displaystyle{ \frac { \sqrt m } y \int_0^y r_1( \xi ) \, d \xi } }
	{ \displaystyle{ \frac { \sqrt m } y \int_0^y r_1( \xi )\, d \xi + \sqrt l } } ,
	}
	\\
	G_{2}(y)
	& \!\!\! = & \!\!\!
	\displaystyle{
	- \sqrt{ lm }
	\left\{
	1 + \left( r_2 (y) + \sqrt { \frac lm } \right)^2
	\right\}
	\frac
	{ \displaystyle{ \frac { \sqrt m } y \int_0^y\left( r_1( \xi ) - r_2 ( \xi ) \right) \, d \xi } }
	{ \displaystyle{ \frac { \sqrt m } y \int_0^y r_1( \xi )\, d \xi + \sqrt l } }
	}
	,
	\\
	G_{3}(y)
	& \!\!\! = & \!\!\!
	\displaystyle{
	\sqrt{ lm }
	\left\{
	1 + \left( r_2 (y) + \sqrt { \frac lm } \right)^2
	\right\}
	\frac { \sqrt m } y \int_0^y r_2( \xi ) \, d \xi}  \\
	&&
	\qquad \quad
	\displaystyle{
	\times
	\frac
	{ \displaystyle{ \frac { \sqrt m } y \int_0^y ( r_1 ( \xi ) - r_2 ( \xi ) ) \, d \xi } }
	{ \displaystyle{
	\left( \frac { \sqrt m } y \int_0^y r_1( \xi )\, d \xi + \sqrt l \right)
	\left( \frac { \sqrt m } y \int_0^y r_2( \xi )\, d \xi + \sqrt l \right) ,
	} }
	}
\end{eqnarray*}
and then we have
\[
	\begin{array}{rl}
	| G_1 (y) |
	= & \!\!\!
	\displaystyle{
	\left|
	\sqrt{lm}
	\left( r_1 (y) + r_2 (y)+ 2 \sqrt{ \frac lm } \right)
	( r_1 (y) - r_2 (y) )
	\frac
	{ \displaystyle{ \frac { \sqrt m } y \int_0^y r_1( \xi ) \, d \xi } }
	{ \displaystyle{ \frac { \sqrt m } y \int_0^y r_1( \xi )\, d \xi + \sqrt l } }
	\right|
	}
	\\
	\leqq & \!\!\!
	\displaystyle{
	2 \sqrt{ lm } \left( My + \sqrt { \frac lm } \right)
	\| r_1 - r_2 \|_X y
	\frac
	{ \displaystyle{ \frac { \sqrt m } 2 M y } }
	{ \displaystyle{ \sqrt l - \frac { \sqrt m } 2 My } }
	}
	\\
	\leqq & \!\!\!
	\displaystyle{
	2 \left( m M y + \sqrt{ lm } \right) M y^2 \| r_1 - r_2 \|_X
	}
	\\
	\leqq & \!\!\!
	\displaystyle{
	C \left( M^2 y^3 + M y^2 \right) \| r_1 - r_2 \|_X
	}
	,
	\end{array}
\]
\[
	\begin{array}{rl}
	| G_2 (y) |
	\leqq & \!\!\!
	\displaystyle{
	\sqrt{lm}
	\left\{ 1 + \left( My + \sqrt{ \frac lm } \right)^2 \right\}
	\frac
	{ \displaystyle{ \frac { \sqrt m } y \int_0^y \| r_1 - r_2 \|_X \xi \, d \xi } }
	{ \displaystyle{ \sqrt l - \frac { \sqrt m } 2 My } }
	}
	\\
	\leqq & \!\!\!
	\displaystyle{
	\left( m + m M^2 y^2 + 2 \sqrt{ lm } My + l \right) y \| r_1 - r_2 \|_X
	}
	\\
	\leqq & \!\!\!
	\displaystyle{
	\left\{
	C \left( M^2 y^3 + M y^2 \right)
	+ ( l + m ) y
	\right\}
	\| r_1 - r_2 \|_X
	}
	,
	\end{array}
\]
and
\[
	\begin{array}{rl}
	| G_3 ( y ) |
	\leqq & \!\!\!
	\displaystyle{
	\sqrt { lm }
	\left\{
	1 + \left( My + \sqrt{ \frac lm } \right)^2
	\right\}
	\frac { \sqrt m } 2 M y
	\frac
	{ \displaystyle{ \frac { \sqrt m } 2 y \| r_1 - r_2 \|_X } }
	{ \displaystyle{ \left( \sqrt l - \frac { \sqrt m } 2 M y \right)^2 } }
	}
	\\
	\leqq & \!\!\!
	\displaystyle{
	\sqrt{ \frac ml }
	\left( m + m M^2 y^2 + 2 \sqrt{ lm } My + l \right)
	My^2 \| r_1 - r_2 \|_X
	}
	\\
	\leqq & \!\!\!
	\displaystyle{
	C \left( M y^2 + M^3 y^4 + M^2 y^3 \right)
	\| r_1 - r_2 \|_X
	}
	.
	\end{array} 
\]
Therefore it holds that
\[
	| F_{21} (y) - F_{22} (y) |
	\leqq
	\left\{
	C \left( M y^2 + M^3 y^4 \right) + ( l + m ) y
	\right\}
	\| r_1 - r_2 \|_X .
\]
Last,
we estimate $F_{3k}(y)$.
By the mean value theorem,
for each $ y \in [ 0 , Y ] $,
there exists $ r_\ast (y) $ between $ r_1 (y)$ and $ r_2 (y) $,
such that
\[
	\begin{array}{l}
	\displaystyle{
	\left\{
	1 + \left( r_1(y) + \sqrt{ \frac lm } \right)^2
	\right\}^{ \frac 32 }
	-
	\left\{
	1 + \left( r_2(y) + \sqrt{ \frac lm } \right)^2
	\right\}^{ \frac 32 }
	}
	\\
	\quad
	=
	\displaystyle{
	3 \left( r_\ast (y) + \sqrt{ \frac lm } \right)
	\left\{ 1 + \left( r_\ast (y) + \sqrt{ \frac lm } \right)^2 \right\}^{ \frac 12 }
	( r_1(y) - r_2 (y) )
	}.
	\end{array}
\]
Hence,
we have
\[
	\begin{array}{l}
	| F_{31} (y) - F_{32} (y) |
	\\
	\quad =
	\displaystyle{
	\left|
	( n - 1 ) \tilde H (y) y
	\left[
	\left\{
	1 + \left( r_1(y) + \sqrt{ \frac lm } \right)^2
	\right\}^{ \frac 32 }
	-
	\left\{
	1 + \left( r_2(y) + \sqrt{ \frac lm } \right)^2
	\right\}^{ \frac 32 }
	\right]
	\right|
	}
	\\
	\quad
	\leqq
	\displaystyle{
	3 ( n - 1 ) y \sup_{ \eta \in [ 0 , Y ] } | \tilde H( \eta ) |
	\left( My + \sqrt{ \frac lm } \right)
	\left\{ 1 + \left( My + \sqrt{ \frac lm } \right)^2 \right\}^{ \frac 12 }
	y \| r_1 - r_2 \|_X
	}
	\\
	\quad
	\leqq
	\displaystyle{
	C \left( 1 + M^2 y^2 \right) y^2 \| r_1 - r_2 \|_X
	}.
	\end{array}
\]
Consequently,
we see that
\[
	\begin{array}{l}
	\displaystyle{
	\left|
	\frac { \Theta ( r_1 ) (y) - \Theta ( r_2 ) (y) } y
	\right|
	}
	\\
	\quad
	\leqq
	\displaystyle{
	\frac { \| r_1 - r_2 \|_X } { y^{ l+m+1 } }
	\int_0^y
	\left\{
	C \left(
	M \eta^2 + M^3 \eta^4
	+ \eta^2 + M^2 \eta^4
	\right)
	+ ( l + m ) \eta
	\right\}
	\eta^{ l + m - 1 }
	d \eta
	}
	\\
	\quad
	\leqq
	\displaystyle{
	\left\{
	C \left(
	M Y + M^3 Y^3
	+ Y
	+ M^2 Y^3
	\right)
	+
	\frac { l + m } { l + m + 1 }
	\right\}
	\| r_1 - r_2 \|_X
	}.
	\end{array}
\]
Let us choose  $Y$ by
\[
	C \left(
	M Y + M^3 Y^3
	+ Y
	+ M^2 Y^3
	\right)
	+
	\frac { l + m } { l + m + 1 }
	< 1 .
\]
Then,
$ \Theta $ is a contraction mapping from $ X_{Y,M} $ to itself.
\par
By Banach's fixed point theorem,
there exists uniquely a fixed point $ r $ of $ \Theta $ on $ X_{Y,M} $. 
This $ r $ is a solution of the integral equation (\ref{IntEq-r}).
If $ \tilde H $ is continuous, 
then $ r $ satisfies the first equation in (\ref{DiffEq-r}).
Since $ r \in X_{Y} $,
it satisfies also the second equation of (\ref{DiffEq-r}),
proving Proposition \ref{prop.4.4}.
\qed
\end{proof}
\par
We note that $ q(y) = \sqrt { \frac lm } + r(y) $ is a solution of
(\ref{DiffEq-qlm}).
By Propositions \ref{prop.4.3} and \ref{prop.4.4},
we finished the proof of the case (b).

By replacing \cite[Proposition 3.3]{doke2} to Propositions \ref{prop.4.3} and \ref{prop.4.4} of this paper, we prove the 
following Theorem  in the  same way as the  proof of \cite[Theorem 3.4]{doke2}.
\begin{thm}
Let $H(s)$ be a continuous function on $\mathbb{R}$,
and fix an $s_{0} \in \mathbb{R}$.
Then,
for any positive numbers 
$c>0$,
$d>0$ and any real numbers $c^\prime $,
$ d^\prime$ with ${c^\prime}^2 + {d^\prime}^{2} = 1$,
there exists a global solution curve $(x(s),y(s))$,
$ s \in \mathbb{R}$,
of {\rm\pref{lm-system}} with the initial conditions $x(s_{0})=c$,
$ y(s_{0})=d$,
$ x^\prime(s_{0})=c^\prime$,
$ y^\prime(s_{0})=d^\prime$.
\label{thm.4.1}
\end{thm}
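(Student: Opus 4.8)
The plan is to argue by continuation, following the structure of the proof of Theorem~\ref{thm.2.1} (equivalently \cite[Theorem 3.4]{doke2}) but invoking Propositions~\ref{prop.4.1}--\ref{prop.4.4} at the singular configurations. First I would apply the standard local existence theorem to obtain a solution $(x(s),y(s))$ of \pref{lm-system} on a maximal interval $(\alpha,\beta)$ containing $s_0$, with $x,y>0$ throughout. The only way continuation can fail at a finite right endpoint $\beta<\infty$ is that the curve reaches the boundary of the open quadrant $\{x>0,\,y>0\}$: since the arc-length relation in \pref{lm-system} keeps $(x',y')$ bounded and $H$ is continuous, hence bounded on the compact interval $[s_0,\beta]$, the coefficients of the system remain bounded as long as $x$ and $y$ stay away from $0$. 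Thus a finite $\beta$ forces $\liminf_{s\to\beta}\min\{x(s),y(s)\}=0$.

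Next I would split according to the limiting configuration at $s=\beta$, reducing by compactness and the arc-length relation to three cases: (a) $y(s)\to0$ with $\lim x(s)>0$; the symmetric case $x(s)\to0$ with $\lim y(s)>0$; and (b) $x(s),y(s)\to0$ together. In case (a), Proposition~\ref{prop.4.1} gives $\lim_{s\to\beta}x'(s)=0$, whence $\lim y'(s)=\pm1$, so $y$ is strictly monotone near $\beta$ and $s\mapsto y$ is locally invertible; rewriting the system with $y$ as the independent variable produces exactly the singular initial value problem \pref{DiffEq2}, whose solution on some interval $0<y\le Y$ is furnished by Proposition~\ref{prop.4.2}. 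The symmetric case is handled verbatim after interchanging $x$ and $y$. In case (b), Proposition~\ref{prop.4.3} yields $\lim x'(s)^2=l/(l+m)\neq0$ and $\lim y'(s)^2=m/(l+m)\neq0$, so $y$ is again locally invertible, and after the substitution $q=\sqrt{l/m}+r$ the transformed problem \pref{DiffEq-r} is solved by Proposition~\ref{prop.4.4}.

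In every case the relevant proposition produces a function $q(y)$ (respectively $r(y)$) that is continuous up to $y=0$ and solves the transformed scalar equation. From it I would reconstruct the curve on the far side of the singular value: set $x(y)=\lim_{s\to\beta}x(s)+\int_0^y q(\eta)\,d\eta$, recover $y'$ from the arc-length relation with the sign prescribed by the one-sided limit of $y'$, and reparametrize by arc length via $ds/dy=(1+q^2)^{1/2}$. By local uniqueness of \pref{lm-system} in the open quadrant, the branch so obtained matches $(x(s),y(s))$ for $s<\beta$ and extends it to an interval $(\alpha,\beta+\delta)$, contradicting the maximality of $\beta$; the identical argument at $\alpha$ excludes a finite left endpoint. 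To conclude globality on all of $\mathbb{R}$ rather than continuation merely up to a finite cluster value, one verifies, as in \cite{doke2}, that the singular values cannot accumulate at a finite $s$.

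The main obstacle I anticipate is precisely this gluing step: one must confirm that the object built from the integral equation in the variable $y$ (or $r$) is, once reparametrized by arc length, a genuine $C^2$ solution of the full system \pref{lm-system} across the singular point, with the correct one-sided limits of $x'$ and $y'$ on both sides. This requires combining Proposition~\ref{prop.4.1} (resp.\ Proposition~\ref{prop.4.3}) for the limiting slope with Proposition~\ref{prop.4.2} (resp.\ Proposition~\ref{prop.4.4}) for the regularity of $q$ up to $y=0$, and then checking that the continued branch satisfies \pref{lm-system} by reversing the derivation of \pref{DiffEq2} (resp.\ \pref{DiffEq-r}). Establishing non-accumulation of singular points is the other point that must be treated with care.
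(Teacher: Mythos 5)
Your proposal is correct and takes essentially the same route as the paper: the paper's own proof of Theorem \ref{thm.4.1} consists precisely of running the continuation argument of \cite[Theorem 3.4]{doke2}, with the proposition used there replaced by Propositions \ref{prop.4.1}--\ref{prop.4.4}, which is exactly the boundary case analysis (axis versus origin), change of independent variable, and gluing that you describe. The two delicate points you flag (reversing the derivation of \pref{DiffEq2} and \pref{DiffEq-r} across the singular value, and non-accumulation of singular parameters) are likewise deferred by the paper to the argument in \cite{doke2}.
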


\begin{rem}
In case of $ s \to b+0 $,
$ l $ principal curvatures $ - \frac { y^\prime (s) } { x(s) } $ tends to $ - \infty $,
and $ m $ principal curvatures $ \frac { x^\prime (s) } { y(s) } $ to $ \infty $,
but the sum of all principal curvatures
\[
	- l \frac { y^\prime (s) } { x(s) }
	+m \frac { x^\prime (s) } { y(s) }
	+ x^{\prime \prime} (s) y^\prime (s) - x^\prime (s) y^{\prime\prime} (s)
\]
 remains bounded and tends to $ ( n - 1 ) H(b) $.
This suggests that
for the generalized rotational hypersurface  $ M $
 of $O(l+1)\times O(m+1)$-type, the asymptotic shape of $ x (s) S^l $- part as $ s \to b+0 $ is the negatively 
 curved $ l + 1 $ dimensional cone with  at $ s = b $,
and that of $ y(s) S^m $- part is the positively curved $ m + 1 $ dimensional cone.
When $ s \to b-0 $,
the asymptotic shape is a similar one with reverse orientation.
\end{rem}
\begin{center}
\begin{tabular}{ll}
Katsuei Kenmotsu: &  Takeyuki Nagasawa:  \\
Mathematical Institute & Department of Mathematics  \\
                       & Graduate School of Science and Engineering  \\
T\^{o}hoku University & Saitama University  \\
Sendai 980--8578  & Saitama 338--8570  \\
Japan  & Japan  \\
\vspace{0.2cm}
kenmotsu@math.tohoku.ac.jp & tnagasaw@rimath.saitama-u.ac.jp
\end{tabular}
\end{center}
\end{document}